\newtheorem{theorem}{Theorem}[section]
\newtheorem{lem}[theorem]{Lemma}
\newtheorem{cor}[theorem]{Corollary}
\newtheorem{assump}{Assumption}
\theoremstyle{remark}
\newtheorem{rem}[theorem]{Remark}
\numberwithin{equation}{section}
\renewcommand{\Re}{\operatorname{Re}}
\renewcommand{\Im}{\operatorname{Im}}
\newcommand{\divg}{\operatorname{div}}
\newcommand*\diff{\mathop{}\!\mathrm{d}}
\DeclarePairedDelimiter\abs{\lvert}{\rvert}%
\DeclarePairedDelimiter\norm{\lVert}{\rVert}%
\newcommand{\tnorm}[1]{{\left\vert\kern-0.25ex\left\vert\kern-0.25ex\left\vert #1 
    \right\vert\kern-0.25ex\right\vert\kern-0.25ex\right\vert}}
\let\oldabs\abs
\def\abs{\@ifstar{\oldabs}{\oldabs*}}
\let\oldnorm\norm
\def\norm{\@ifstar{\oldnorm}{\oldnorm*}}
\newcommand*{\myemail}[1]{%
    \normalsize\href{mailto:#1}{#1}\par
    }
\titleformat{\section}[block]{\centering \scshape \large}{\thesection.}{0.3\baselineskip}{}
\titlespacing{\section}{0pt}{*5}{*2}
\titleformat{\subsection}[block]{\bfseries}{\thesubsection.}{.5em}{}
\titlespacing{\subsection}{0pt}{*2.5}{*1}
\titleformat{\subsubsection}[runin]{\itshape}{\normalfont \thesubsubsection.}{.5em}{}[.]
\titlespacing{\subsubsection}{0pt}{*2.5}{0.5em}
\title{WKB analysis of the Logarithmic Nonlinear Schrodinger Equation in an analytic framework.}
\date{\vspace{-1cm}}
\author[]{Guillaume Ferriere}
\affil[]{IMAG, Univ Montpellier, CNRS, Montpellier, France \\ \myemail{guillaume.ferriere@umontpellier.fr}}
\begin{document}

\maketitle

\begin{abstract}
    We are interested in a WKB analysis of the Logarithmic Non-Linear Schrödinger Equation with "Riemann-like" variables in an analytic framework in semiclassical regime.
    We show that the Cauchy problem is locally well posed uniformly in the semiclassical constant and that the semiclassical limit can be performed.
    In particular, our framework is not only compatible with the Gross-Pitaevskii equation with logarithmic nonlinearity, but also allows initial data (and solutions) which can converge to $0$ at infinity.
\end{abstract}

\section{Introduction}

\subsection{Setting}

We are interested in the \textit{Logarithmic Non-Linear Schrödinger Equation} (also called logNLS)
\begin{equation}
    i \varepsilon \partial_t u^\varepsilon + \frac{\varepsilon^2}{2} \Delta u^\varepsilon = \lambda \ln{\abs{u^\varepsilon}^2} u^\varepsilon, \qquad \qquad u^\varepsilon (0) = u^\varepsilon_\textnormal{in}, \label{eq:lognls}
\end{equation}
with $x \in \mathbb{R}^d$, $d \geq 1$, $\lambda \in \mathbb{R} \setminus \{ 0 \}$, $\varepsilon > 0$. This equation was introduced as a model of nonlinear wave mechanics and in nonlinear optics (\cite{nonlin_wave_mec}, see also \cite{inco_white_light_log, log_nls_nuclear_physics, quantal_damped_motion, solitons_log_med, log_nls_magma_transp}).
The case $\lambda > 0$ (whose study of the Cauchy problem goes back to \cite{cazenave-haraux, Guerrero_Lopez_Nieto_H1_solv_lognls}) was studied by R. Carles and I. Gallagher who made explicit an unusually faster dispersion with a universal behaviour of the modulus of the solution (see \cite{carlesgallagher}). The knowledge of this behaviour was recently improved with a convergence rate but also extended through the semiclassical limit in \cite{Ferriere__Wass_semiclass_defoc_NLS}.
On the other hand, the case $\lambda < 0$ seems to be the most interesting from a physical point of view and has been studied formally and rigorously (see for instance \cite{Cazenave_log_nls, Dav_Mont_Squa_lognls, quantal_damped_motion, carlesnouri, Ferriere__superposition_logNLS, Ferriere__existence_multi_solitons_logNLS}).

This paper addresses the semiclassical limit of \eqref{eq:lognls} for general $\lambda \neq 0$ through WKB analysis in an analytic framework. For this, we first address the Cauchy problem of the system given by this WKB analysis (see \eqref{sys:euler_modified_semicla}) and give a local Cauchy theory independent of $\varepsilon \in [0,1]$. Then, we prove that the solutions for $\varepsilon > 0$ converge when $\varepsilon \rightarrow 0$ to the solution constructed for $\varepsilon = 0$ as expected.
Last, we address the complete convergence of the wave function $u^\varepsilon$ as $\varepsilon \rightarrow 0$.

\subsection{The WKB analysis for NLS}

In the case $\lambda > 0$, R. Carles and A. Nouri \cite{carlesnouri} have performed a WKB analysis of this equation: for initial data of the form $u^\varepsilon_\textnormal{in} = \sqrt{\rho_\textnormal{in}} \, e^{ i \frac{\phi_\textnormal{in}}{\varepsilon}}$ (in general dimension $d$), one can seek $u^\varepsilon$ under the form $u^\varepsilon = a^\varepsilon e^{i \frac{\phi^\varepsilon}{\varepsilon}}$ where $a^\varepsilon \in \mathbb{C}$ and $\phi^\varepsilon \in \mathbb{R}$ satisfy:
\begin{System} \label{sys:wkb}
    \partial_t \phi^\varepsilon + \frac{1}{2} \nabla \phi^\varepsilon \cdot \nabla \phi^\varepsilon + \lambda \ln{\abs{a^\varepsilon}^2} = 0, \qquad \qquad &\phi^\varepsilon (0) = \phi_\textnormal{in}, \\
    \partial_t a^\varepsilon + \nabla \phi^\varepsilon \cdot \nabla a^\varepsilon + \frac{1}{2} a^\varepsilon \Delta \phi^\varepsilon = i \frac{\varepsilon}{2} \Delta a^\varepsilon, \qquad \qquad  &a^\varepsilon (0) = \sqrt{\rho_\textnormal{in}}.
\end{System}
Note that allowing $a^\varepsilon$ to be complex-valued (even though $\sqrt{\rho_\textnormal{in}}$ is real-valued) gives a degree of freedom to dispatch terms from \eqref{eq:lognls} into this system. Then, they follow the choice introduced by Grenier which is more robust than the Madelung transform when semiclassical limit is considered (see \cite{Carles_Danchin_Saut__Madelung_GP_Kort}).
From this system, one usually defines
\begin{equation} \label{eq:rel_v_phi}
    v^\varepsilon \coloneqq \nabla \phi^\varepsilon.
\end{equation}
This relation is also equivalent to (see \cite{carlesnouri})
\begin{equation*}
    \phi^\varepsilon (t,x) = \phi_\textnormal{in} (x) - \int_0^t \Bigl( \frac{1}{2} \abs{v^\varepsilon (\tau,x)}^2 + \lambda \ln{\abs{a^\varepsilon (\tau,x)}^2} \Bigr) \diff \tau, \label{eq:rel_v_phi_2}
\end{equation*}
so that, along with $a^\varepsilon$, determining $\phi^\varepsilon$ turns out to be equivalent to determining $v^\varepsilon$ solution to
\begin{System} \label{eq:1st_rel}
    \partial_t v^\varepsilon + ( v^\varepsilon \cdot \nabla ) v^\varepsilon + \lambda \, \nabla \Bigl( \ln{\abs{a^\varepsilon}^2} \Bigr) = 0, \qquad \qquad &v^\varepsilon (0) = \nabla \phi_\textnormal{in}, \\
    \partial_t a^\varepsilon + v^\varepsilon \cdot \nabla a^\varepsilon + \frac{1}{2} a^\varepsilon \divg v^\varepsilon = i \frac{\varepsilon}{2} \Delta a^\varepsilon, \qquad \qquad  &a^\varepsilon (0) = \sqrt{\rho_\textnormal{in}}.
\end{System}

\begin{rem}
    To get this system, we have used the fact that
    \begin{equation} \label{eq:conv_term_v_k}
        \frac{1}{2} \nabla ( \abs{v^\varepsilon}^2 ) = (v^\varepsilon \cdot \nabla) v^\varepsilon,
    \end{equation}
    due to the fact that $v^\varepsilon$ is a gradient. All across this paper, we will use the general fact that, for an irrotational field $f$, one has
    \begin{equation*}
        \frac{1}{2} \nabla ( \abs{f}^2 ) = (f \cdot \nabla) f.
    \end{equation*}
\end{rem}

The semiclassical limit $\varepsilon \rightarrow 0$ for $u^\varepsilon$ relates classical and quantum wave equations and is expected to be described by the laws of hydrodynamics (see e.g. \cite{Gerard__Semicla_NLS, Grenier_semicla_NLS, Gasser_Lin_Markowich__dispersive_lim_NLS, Desjardins_Lin__semicla_mod_NLS}).
In particular, passing formally to the limit $\varepsilon \rightarrow 0$ in \eqref{eq:1st_rel} leads to:
\begin{System} \label{eq:sym_iso_euler}
    \partial_t v + (v \cdot \nabla) v + \lambda \, \nabla \Bigl( \ln{\abs{a}^2} \Bigr) = 0, \qquad \qquad &v (0) = \nabla \phi_\textnormal{in}, \\
    \partial_t a + v \cdot \nabla a + \frac{1}{2} a \divg v = 0, \qquad \qquad  &a (0) = \sqrt{\rho_\textnormal{in}}.
\end{System}
which is the symmetrized version of the isothermal Euler system ($\varepsilon = 0$) with $\rho = \abs{a}^2$ (see \cite{Chemin_Dynamique_gaz, Makino_Ukai_Kawashima__Euler}):
\begin{System} \label{eq:iso_euler}
    \partial_t \rho + \divg{(\rho v)} = 0, \\
    \partial_t (\rho v) + \divg{(\rho v \otimes v)} + \lambda \nabla \rho = 0.
\end{System}

The WKB analysis is not exclusive to \eqref{eq:lognls}, it has been used a lot for general non-linear Schrödinger equations. For instance, it has been discussed in \cite{Carles_WKB_NLS} for general nonlinearity of the form
\begin{equation} \label{eq:gen_nls}
    i \varepsilon \partial_t u^\varepsilon + \frac{\varepsilon^2}{2} \Delta u^\varepsilon = \varepsilon^\kappa f({\abs{u^\varepsilon}^2}) u^\varepsilon.
\end{equation}
In particular, the WKB type analysis is justified for $\kappa \geq 1$, which corresponds to a weak nonlinearity. When $\kappa = 0$, the mathematical analysis of the semiclassical limit for nonlinear Schrodinger equations has been well developed for two cases: for analytic initial data (see for instance \cite{Gerard__Semicla_NLS, Thomann_Instability_NLS, Thomann_anal_NLS_manifolds}) and for initial data in some Sobolev space with a defocusing nonlinearity so that the analogue of \eqref{eq:1st_rel} is hyperbolic symmetric, possibly with a change of variables (see \cite{Grenier_semicla_NLS, Alazard_Carles__WKB_Sobolev, Chiron_Rousset__Geo_optics_NLS}).
It was also extended to the case of generalized derivative nonlinear Schrodinger equations (in dimension $d=1$)
\begin{equation*}
    i \varepsilon \partial_t u^\varepsilon + \frac{\varepsilon^2}{2} \Delta u^\varepsilon + i \frac{\varepsilon}{2} \partial_x ( g(\abs{u^\varepsilon}^2) u^\varepsilon ) = \varepsilon^\kappa f({\abs{u^\varepsilon}^2}) u^\varepsilon.
\end{equation*}
In \cite{Desjardins_Lin_Tso__Semicla_dNLS}, the semiclassical analysis for this equation relies on the assumption
\begin{equation*}
    \partial_x \phi^\varepsilon \, g' > 0, \qquad f \equiv 0,
\end{equation*}
and was generalized by \cite{Desjardins_Lin__semicla_mod_NLS} to the case
\begin{equation*}
    \partial_x \phi^\varepsilon \, g' + f' > 0.
\end{equation*}
These assumptions are made to ensure hyperbolicity, but have the strong drawback to involve the solution itself. However, hyperbolicity is not needed when one works with analytic functions (\cite{carles-gallo}). In this context, the semiclassical limit for \eqref{eq:gen_nls} (with $\kappa = 0$) was studied by \cite{Gerard__Semicla_NLS, Thomann_anal_NLS_manifolds}, thanks to some tools developed by J. Sjöstrand \cite{Sjostrand__Sing_anal_microloc}.

On the other hand, WKB analysis is also useful for the study of the Gross-Pitaevskii equation, for instance in the context where initial data do not necessarily decay to zero at infinity. The Cauchy problem (\cite{Gallo_Schr_group_Zhidkov, Gerard_Cauchy_GP}) and the semiclassical limit (\cite{Lin_Zhang__Semicla_GP}) of \eqref{eq:gen_nls} with $f(y) = y - 1$ and $\kappa = 0$ for instance have already been studied. In this case, the Hamiltonian structure yields that
\begin{equation*}
    \mathcal{E} (u^\varepsilon) = \varepsilon^2 \norm{\nabla u^\varepsilon (t)}_{L^2}^2 + \norm{\abs{u^\varepsilon (t)}^2 - 1}_{L^2}^2
\end{equation*}
is independent of time, at least formally, which leads to a natural energy space,
\begin{equation*}
    E = \{ u \in H^1_\textnormal{loc} ; \nabla u \in L^2, \abs{u}^2 - 1 \in L^2 \},
\end{equation*}
to study the Cauchy problem (see also \cite{Bethuel_Saut__Travel_waves_GP}). The modulus of functions in this space morally goes to $1$ at infinity. For more general initial data which are bounded but may not be in this space (for instance if they have several limits at infinity), P. E. Zhidkov introduced in the one-dimensional case in \cite{Zhidkov__Cauchy_NLS, Zhidkov_KdV_NLS} the so-called Zhidkov spaces:
\begin{equation*}
    X^s = \{ u \in L^\infty ; \nabla u \in H^{s-1} \}, \qquad s > \frac{d}{2}.
\end{equation*}
The study of theses spaces was generalized in the multidimensional case by C. Gallo (\cite{Gallo_Schr_group_Zhidkov}). They were also used by T. Alazard and R. Carles \cite{Alazard_Carles__WKB_GP_analytic} in their WKB analysis for the Gross-Pitaevskii equation. R. Carles and A. Nouri \cite{carlesnouri} have also shown that, for initial data in Zhidkov spaces bounded away from vacuum, the Wigner measure of $u^\varepsilon$ solution to \eqref{eq:lognls} weakly converges to a monokinetic measure $f = \rho (t,x) \otimes \delta_{\xi = v(t,x)}$ such that $(\rho, v)$ satisfies the isothermal Euler system, thanks to the WKB analysis described before.

\subsection{Riemann invariants}

The isothermal Euler system \eqref{eq:iso_euler} has been studied a lot in different contexts (for example \cite{Tsuge__iso_Euler_spher, Bhat_Fetecau_regul_iso_Euler, Dong__blowup_iso_Euler, Chen_Li__entropy_sol_iso_Euler, Banda_Herty_Klar__iso_Euler_network}).
B. Riemann solved the "Riemann problem" for this equation in his memoir to the Royal Academy of Sciences of Göttinger (1860) (see \cite{Serre_hyperbolic}).
In dimension $d=1$ and in the case $\lambda > 0$, he introduced the so-called Riemann invariants $w_1 = v + \sqrt{2 \lambda} \ln{\rho}$ and $w_2 = v - \sqrt{2 \lambda} \ln{\rho}$. Then, he proved that the necessary and sufficient condition for the solution to exist for all positive times is that $w_1$ (resp. $w_2$) is non-decreasing (resp. non-increasing).

This shows that the good unknown to be considered would rather be $\ln{\rho}$ in \eqref{eq:iso_euler}, or $\ln{a}$ in \eqref{eq:sym_iso_euler}. In particular, we should therefore consider $\ln{a^\varepsilon}$ in \eqref{eq:1st_rel}.
This intuition is strengthened by the fact that dividing the second equation by $a^\varepsilon$ in system \eqref{eq:1st_rel} gives
\begin{equation*}
    \frac{\partial_t a^\varepsilon}{a^\varepsilon} + v^\varepsilon \cdot \frac{\nabla a^\varepsilon}{a^\varepsilon} + \frac{1}{2} \divg v^\varepsilon = i \frac{\varepsilon}{2} \frac{\Delta a^\varepsilon}{a^\varepsilon},
\end{equation*}
which could formally be written in terms of $\ln{a^\varepsilon}$ only, since we also have (at least for $f$ real)
\begin{equation*}
    \frac{\Delta f}{f} = \Delta (\ln{f}) + \nabla (\ln{f}) \cdot \nabla (\ln{f}).
\end{equation*}
Moreover, in the first equation in \eqref{eq:1st_rel}, if formally $f = \ln{a^\varepsilon}$ \textit{i.e.} $a^\varepsilon = e^f$, then
\begin{equation*}
    \ln{\abs{a^\varepsilon}^2} = 2 \Re f.
\end{equation*}

However, in the latter, contrary to $a$, $a^\varepsilon$ is complex, a fact which may lead to some problems when defining $\ln{a^\varepsilon}$.
Still, we can override this difficulty. Indeed, $a^\varepsilon$ is defined this way only to get $u^\varepsilon$ solution to \eqref{eq:lognls}. Instead of defining $a^\varepsilon$, one can try to directly define $\psi^\varepsilon$ (along with $\phi^\varepsilon$) such that $u^\varepsilon = e^{\frac{\psi^\varepsilon}{2} + i \frac{\phi^\varepsilon}{\varepsilon}}$ is solution to \eqref{eq:lognls}.
For this, we first assume $\rho_\textnormal{in}^\varepsilon = e^{\psi_\textnormal{in}^\varepsilon}$, so that $u^\varepsilon_\textnormal{in} = e^{\frac{\psi_\textnormal{in}^\varepsilon}{2} + i \frac{\phi_\textnormal{in}^\varepsilon}{\varepsilon}}$ (note that we allow the initial data to depend on $\varepsilon$ with suitable conditions which will be made explicit later).
Thus we can seek $u^\varepsilon$ under the form $u^\varepsilon = e^{\frac{\psi^\varepsilon}{2} + i \frac{\phi^\varepsilon}{\varepsilon}}$ with $\psi^\varepsilon \in \mathbb{C}$ and $\phi^\varepsilon \in \mathbb{R}$ such that:
\begin{System} \label{sys:wkb_riemann}
    \partial_t \phi^\varepsilon + \frac{1}{2} \nabla \phi^\varepsilon \cdot \nabla \phi^\varepsilon + \lambda \Re \psi^\varepsilon = 0, \qquad \qquad &\phi^\varepsilon (0) = \phi_\textnormal{in}^\varepsilon, \\
    \partial_t \psi^\varepsilon + \nabla \phi^\varepsilon \cdot \nabla \psi^\varepsilon + \Delta \phi^\varepsilon = i \frac{\varepsilon}{2} \Bigl( \Delta \psi^\varepsilon + 2 \, \nabla \psi^\varepsilon \cdot \nabla \psi^\varepsilon \Bigr), \qquad \qquad  &\psi^\varepsilon (0) = \psi_\textnormal{in}^\varepsilon.
\end{System}
Note that the pseudo scalar product in the second equation is defined for $a, b \in \mathbb{C}^d$ by
\begin{equation*}
    a \cdot b = \sum_i a_i \, b_i,
\end{equation*}
and therefore $a \cdot a$ is not necessarily real.
In the same way as for passing from \eqref{sys:wkb} to \eqref{eq:1st_rel}, we can define $v^\varepsilon \coloneqq \nabla \phi^\varepsilon$ \eqref{eq:rel_v_phi}, which leads to
\begin{System} \label{sys:euler_riemann_semicla}
    \partial_t v^\varepsilon + (v^\varepsilon \cdot \nabla) v^\varepsilon + \lambda \, \nabla \Bigl( \Re \psi^\varepsilon \Bigr) = 0,\qquad \qquad &v^\varepsilon (0) = \nabla \phi_\textnormal{in}^\varepsilon, \\
    \partial_t \psi^\varepsilon + v^\varepsilon \cdot \nabla \psi^\varepsilon + \divg v^\varepsilon = i \frac{\varepsilon}{2} \Bigl( \Delta \psi^\varepsilon + 2 \, \nabla \psi^\varepsilon \cdot \nabla \psi^\varepsilon \Bigr), \qquad \qquad  &\psi^\varepsilon (0) = \psi_\textnormal{in}^\varepsilon.
\end{System}
Moreover, \eqref{eq:rel_v_phi} is here equivalent to
\begin{equation} \label{eq:rel_v_phi_3}
    \phi^\varepsilon (t,x) = \phi_\textnormal{in}^\varepsilon (x) - \int_0^t \Bigl( \frac{1}{2} \abs{v^\varepsilon (\tau,x)}^2 + \lambda \Re \psi^\varepsilon (\tau, x) \Bigr) \diff \tau.
\end{equation}
As soon as the initial data converges, passing formally to the semiclassical limit $\varepsilon \rightarrow 0$ in system \eqref{sys:euler_riemann_semicla} yields:
\begin{System} \label{sys:euler_riemann}
    \partial_t v + (v \cdot \nabla) v + \lambda \, \nabla \psi = 0, \qquad \qquad &v (0) = \nabla \phi_\textnormal{in}, \\
    \partial_t \psi + v \cdot \nabla \psi + \divg v = 0, \qquad \qquad  &\psi (0) = \psi_\textnormal{in}.
\end{System}
This system is linked to the isothermal Euler system \eqref{eq:iso_euler} (and thus also to \eqref{eq:sym_iso_euler}) in the sense that it is the same but written in "Riemann-like" variables (with $\rho = e^{\psi}$).

\begin{rem}
    If we neglect the convective terms $(v \cdot \nabla) v$ and $v \cdot \nabla \psi$, which are still not regularizing terms and may already lead to shocks (like for the Burgers equation), then we would get the system
    \begin{System}
        \partial_t v + \lambda \, \nabla \psi = 0, \\
        \partial_t \psi + \divg v = 0.
    \end{System}
    For instance, $\psi$ would satisfy
    \begin{equation*}
        \partial^2_{tt} \psi - \lambda \Delta \psi = 0,
    \end{equation*}
    and a similar equation would hold for $v$.
    In particular, when $\lambda > 0$, we get the wave equation, which is well posed in $L^2$-based spaces (\textit{e.g.} $(\psi (0), \partial_t \psi (0)) \in H^1 \times L^2$), or based on Zhidkov spaces for instance.
    However, if $\lambda < 0$, this equation becomes way more singular. Indeed, for the Fourier transform, defined for every $f \in L^1$ and for every $\xi \in \mathbb{R}^d$ by
    \begin{equation*}
        \hat{f} (\xi) = \mathcal{F} (f) (\xi) \coloneqq \int f(x) e^{- i x \cdot \xi} \diff x,
    \end{equation*}
    and then extended for any $f \in L^2$,
    we get
    \begin{equation*}
        \partial^2_{tt} \hat{\psi} - \abs{\lambda} \abs{\xi}^2 \hat{\psi} = 0,
    \end{equation*}
    whose solutions are
    \begin{equation*}
        \hat{\psi} (\xi) = \hat{\psi} (0) \, \cosh ( \sqrt{\abs{\lambda}} \abs{\xi} t ) + \partial_t \hat{\psi} (0) \, \sinh ( \sqrt{\abs{\lambda}} \abs{\xi} t ).
    \end{equation*}
    Thus the Fourier transform is in $L^2$ for some interval $[0, T]$ only if the initial data are analytic.
    Hence, one may probably not hope for a Cauchy theory of \eqref{sys:euler_riemann} (and thus for \eqref{sys:euler_riemann_semicla}) in lower regularities for this case (see for instance \cite{Metivier__well_posed_Cauchy} in 1D and \cite{Lerner_Nguyen_Texier__instability_1st_order} in higher dimension). Our construction will still work for $\lambda > 0$, so we take the general case $\lambda \neq 0$.
\end{rem}

\subsection{Transformation of the system} \label{subsec:sys_v_zeta}

Obviously, we have
\begin{equation*}
    \nabla \Bigl( \Re \psi^\varepsilon \Bigr) = \Re \Bigl( \nabla \psi^\varepsilon \Bigr)
\end{equation*}
Therefore, the system \eqref{sys:euler_riemann_semicla} does not involve $\psi^\varepsilon$ directly, but only derivatives of this function. Therefore, in the same way as for $\phi^\varepsilon$ and $v^\varepsilon$, one can transform the equation for $\psi^\varepsilon$ into an equation for
\begin{equation} \label{eq:rel_zeta_psi}
    \zeta^\varepsilon \coloneqq \nabla \psi^\varepsilon,
\end{equation}
so that \eqref{sys:euler_riemann_semicla} becomes
\begin{System} \label{sys:euler_modified_semicla}
    \partial_t v^\varepsilon + (v^\varepsilon \cdot \nabla) v^\varepsilon + \lambda \, \Re \zeta^\varepsilon = 0,\qquad \qquad &v^\varepsilon (0) = \nabla \phi_\textnormal{in}^\varepsilon, \\
    \partial_t \zeta^\varepsilon + \nabla \Bigl( v^\varepsilon \cdot \zeta^\varepsilon \Bigr) + \nabla \divg v^\varepsilon = i \frac{\varepsilon}{2} \Bigl( \nabla \divg \zeta^\varepsilon + 2 \, \nabla ( \zeta^\varepsilon \cdot \zeta^\varepsilon ) \Bigr), \qquad \qquad  &\zeta^\varepsilon (0) = \nabla \psi_\textnormal{in}^\varepsilon.
\end{System}
In the same way as the relation between \eqref{eq:rel_v_phi} and \eqref{eq:rel_v_phi_3}, \eqref{eq:rel_zeta_psi} is equivalent to
\begin{equation} \label{eq:rel_psi_zeta}
    \psi^\varepsilon (t,x) = \psi_\textnormal{in}^\varepsilon (x) - \int_0^t \biggl[ v^\varepsilon (\tau, x) \cdot \zeta^\varepsilon (\tau, x) + \divg v^\varepsilon (\tau, x) - i \frac{\varepsilon}{2} \Bigl( \divg \zeta^\varepsilon (\tau, x) + 2 \, \zeta^\varepsilon (\tau, x) \cdot \zeta^\varepsilon (\tau, x) \Bigr) \biggr] \diff \tau.
\end{equation}
Indeed, it is obvious that, with the definition \eqref{eq:rel_psi_zeta}, $\psi^\varepsilon (0) = \psi_\textnormal{in}^\varepsilon$, \textit{i.e.} $\nabla \psi^\varepsilon (0) = \nabla \psi_\textnormal{in}^\varepsilon = \zeta^\varepsilon (0)$, and one can easily compute that
\begin{equation*}
    \partial_t (\nabla \psi^\varepsilon) - \partial_t \zeta^\varepsilon = \nabla \partial_t \psi^\varepsilon - \partial_t \zeta^\varepsilon = 0.
\end{equation*}

\subsection{Main results}

\subsubsection{Notations for analytic spaces} \label{subsubsec:analytic}

All across the paper, we denote
\begin{equation*}
    \langle \xi \rangle \coloneqq \sqrt{1 + \abs{\xi}^2}.
\end{equation*}
Then, for $\delta, \ell \geq 0$ and $n \in \mathbb{N}^*$, define the analytic spaces (like in \cite{Ginibre_Velo__Gevrey}):
\begin{equation*}
    \mathcal{H}_{\delta}^\ell ( \mathbb{R}^d, \mathbb{R}^n ) \coloneqq \{ f \in L^2 ( \mathbb{R}^d, \mathbb{R}^n ), \norm{f}_{\mathcal{H}_{\delta}^\ell} < \infty \},
\end{equation*}
where $L^2 ( \mathbb{R}^d, \mathbb{R}^n )$ designates the functions in $L^2$ from $\mathbb{R}^d$ with values in $\mathbb{R}^n$ and
\begin{equation*}
    \norm{f}_{\mathcal{H}_{\delta}^\ell}^2 = \int_{\mathbb{R}^d} \langle \xi \rangle^{2 \ell} e^{2 \delta \langle \xi \rangle} \abs{\hat{f} (\xi)}^2 \diff \xi \eqqcolon \norm{f}_{\ell, \delta}^2,
\end{equation*}
and $\hat{f} = \mathcal{F}(f)$ designates the Fourier transform in space variables. We also define the scalar product associated to this norm:
\begin{equation*}
    \langle f, g \rangle_{\ell, \delta} \coloneqq \int_{\mathbb{R}^d} \langle \xi \rangle^{2 \ell} e^{2 \delta \langle \xi \rangle} \hat{f} (\xi) \cdot \overline{\hat{g} (\xi)} \diff \xi.
\end{equation*}
For simplicity of notations, we will drop $( \mathbb{R}^d, \mathbb{R}^n )$ in the definition of $\mathcal{H}_\delta^\ell$ and also in $L^2$.
When we consider "continuous" $\mathcal{H}_{\delta}^\ell$ valued functions, these are functions that belong to
\begin{equation*}
    \mathcal{C} (I, \mathcal{H}_{\delta}^\ell) \coloneqq \{ f \in \mathcal{C} (I, L^2), \mathcal{F}^{-1} (w_\delta \hat{f}) \in \mathcal{C} (I, \mathcal{H}_0^{\ell}) = \mathcal{C} (I, H^\ell) \},
\end{equation*}
for some interval $I$ and where
\begin{equation*}
    w_\delta \coloneqq \exp{\Bigl( \delta \langle \xi \rangle \Bigr)},
\end{equation*}
with $\delta = \delta (t)$ continuous (and even $\mathcal{C}^1$).
When $I = [0, T]$, we denote
\begin{gather*}
    \mathcal{C}_T \mathcal{H}_{\delta}^\ell \coloneqq \mathcal{C} ([0, T], \mathcal{H}_{\delta}^\ell), \\
    L^\infty_T \mathcal{H}_{\delta}^\ell \coloneqq L^\infty ((0, T), \mathcal{H}_{\delta}^\ell), \\
    L^2_T \mathcal{H}_{\delta}^\ell \coloneqq L^2 ((0, T), \mathcal{H}_{\delta}^\ell), \\
    H^1_T \mathcal{H}_{\delta}^\ell \coloneqq H^1 ((0, T), \mathcal{H}_{\delta}^\ell).
\end{gather*}
Moreover, we also denote the following norms for any $t \in [0, T]$ as
\begin{gather*}
    \tnorm{f}_{\infty, t, \ell, \delta} \coloneqq \norm{f}_{L^\infty_t \mathcal{H}_{\delta}^\ell} = \sup_{\tau \in (0, t)} \norm{f (\tau)}_{\ell, \delta (\tau)}, \\
    \tnorm{f}_{2, t, \ell, \delta} \coloneqq \norm{f}_{L^2_t \mathcal{H}_{\delta}^\ell} = \Bigl( \int_0^t \norm{f (\tau)}_{\ell, \delta (\tau)}^2 \diff \tau \Bigr)^\frac{1}{2}.
\end{gather*}
Finally, for any $M, \ell, \delta > 0$ and $f \in L^\infty_T \mathcal{H}^\ell_\delta \cap L^2_T \mathcal{H}^{\ell + \frac{1}{2}}_\delta \cap \mathcal{C}_T \mathcal{H}^{\ell - \frac{1}{2}}_\delta$, we also define for all $t \in [0, T]$:
\begin{equation*}
    \mathcal{E}_{M, \ell, \delta} (f) (t) \coloneqq \norm{f (t)}_{\ell, \delta (t)}^2 + 2 M \, \tnorm{f}_{2, t, \ell, \delta}^2.
\end{equation*}

\subsubsection{Main result on $(\zeta^\varepsilon, v^\varepsilon)$}

We are interested in system \eqref{sys:euler_modified_semicla} in an analytic framework. For this, fix $\lambda \neq 0$, $\ell > \frac{d}{2}$ and $\delta_\textnormal{in} > 0$ for the rest of this paper. Then, we assume the following:
\begin{assump} \label{ass:bound}
    $\psi_\textnormal{in}^\varepsilon, \phi_\textnormal{in}^\varepsilon \in \mathcal{C}^1 (\mathbb{R}^d)$ are such that $\nabla \psi_\textnormal{in}^\varepsilon \in \mathcal{H}_{\delta_\textnormal{in}}^\ell$ and $\nabla \phi_\textnormal{in}^\varepsilon \in \mathcal{H}_{\delta_\textnormal{in}}^{\ell+1}$ are uniformly bounded in $\varepsilon \in [0, 1]$ in these spaces: there exists $\omega_\textit{in}$ such that for all $\varepsilon \in [0,1]$
    \begin{equation*}
        \norm{\nabla \psi_\textnormal{in}^\varepsilon}_{\ell, \delta_\textnormal{in}}^2 + \norm{\nabla \phi_\textnormal{in}^\varepsilon}_{\ell + 1, \delta_\textnormal{in}}^2 \leq \omega_\textnormal{in}.
    \end{equation*}
\end{assump}

\begin{rem}
    The initial data $\psi_\textnormal{in}^\varepsilon$ and $\phi_\textnormal{in}^\varepsilon$ might be unbounded when $\abs{x} \rightarrow \infty$, or have different limits at infinity (for instance at $\pm \infty$ in dim $d=1$, see Section \ref{sec:ass_in_data} and more specifically Lemma \ref{lem:ass_in_data}). In particular, this means that we allow the initial data $u^\varepsilon_\textnormal{in}$ for \eqref{eq:lognls} or $\rho_\textnormal{in}^0$ for \eqref{eq:iso_euler} to be near vacuum at infinity. This is different from \cite{carlesnouri}, which requires the initial data to be bounded away from zero (along with $\lambda > 0$).
\end{rem}

Our first main result is divided into two parts. Under the previous assumptions, the first part addresses the Cauchy problem of \eqref{sys:euler_modified_semicla}. With such a Cauchy theory, we then deal with semiclassical limit by stating that $(\zeta^\varepsilon, v^\varepsilon)$ converge (in some sense) to $(\zeta^0, v^0)$ as $\varepsilon \rightarrow 0$. For the latter,  we define for any $k > 0$ the following constant which depends only on $\varepsilon$:
\begin{equation*}
    (D^\varepsilon_k)^2 \coloneqq \norm{\nabla \psi^\varepsilon_\textnormal{in} - \nabla \psi^0_\textnormal{in}}_{k, \delta_\textnormal{in}}^2 + \norm{\nabla \phi^\varepsilon_\textnormal{in} - \nabla \phi^0_\textnormal{in}}_{k + 1, \delta_\textnormal{in}}^2.
\end{equation*}

\begin{theorem} \label{th:main_th}
    For any $(\psi_\textnormal{in}^\varepsilon, \phi_\textnormal{in}^\varepsilon)$ satisfying Assumption \ref{ass:bound}, there exists $T > 0$, $M > 0$ and $\delta = \delta (t) \coloneqq \delta_\textnormal{in} - M t$ such that, for all $\varepsilon \in [0,1]$:
    \begin{itemize}
    \item There exists a unique solution $(\zeta^\varepsilon, v^\varepsilon) \in L^\infty ((0, T), \mathcal{H}_{\delta}^\ell \times \mathcal{H}_{\delta}^{\ell+1}) \cap \mathcal{C} ([0, T], \mathcal{H}_{\delta}^{\ell - \frac{1}{2}} \times \mathcal{H}_{\delta}^{\ell + \frac{1}{2}}) \cap L^2 ((0, T), \mathcal{H}_{\delta}^{\ell+\frac{1}{2}} \times \mathcal{H}_{\delta}^{\ell+\frac{3}{2}})$ to \eqref{sys:euler_modified_semicla}.

    \item There exists $C > 0$ independent of $\varepsilon \in [0, 1]$ such that
    \begin{equation*}
        \tnorm{\zeta^\varepsilon - \zeta^0}_{\infty, T, \ell - \frac{1}{2}, \delta} + \tnorm{v^\varepsilon - v^0}_{\infty, T, \ell + \frac{1}{2}, \delta} + \tnorm{\zeta^\varepsilon - \zeta^0}_{2, T, \ell, \delta} + \tnorm{v^\varepsilon - v^0}_{2, T, \ell + 1, \delta} \leq C \Bigl( \sqrt{\varepsilon} + D^\varepsilon_{\ell - \frac{1}{2}} \Bigr),
    \end{equation*}
    and, if $\ell > \frac{d+1}{2}$,
    \begin{gather*}
        \tnorm{\zeta^\varepsilon - \zeta^0}_{\infty, T, \ell - 1, \delta} + \tnorm{v^\varepsilon - v^0}_{\infty, T, \ell, \delta} + \tnorm{\zeta^\varepsilon - \zeta^0}_{2, T, \ell - \frac{1}{2}, \delta} + \tnorm{v^\varepsilon - v^0}_{2, T, \ell  + \frac{1}{2}, \delta} \leq C \Bigl( {\varepsilon} + D^\varepsilon_{\ell - 1} \Bigr).
    \end{gather*}
    \end{itemize}
\end{theorem}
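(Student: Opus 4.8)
The plan is to treat \eqref{sys:euler_modified_semicla} as a quasilinear system whose loss of derivatives — coming both from the transport/quadratic nonlinearities and, when $\lambda<0$, from the genuinely ill-posed linear coupling $\nabla\divg v\leftrightarrow\lambda\Re\zeta$ — is absorbed not by hyperbolicity but by the shrinking of the analyticity radius $\delta(t)=\delta_\textnormal{in}-Mt$. The whole argument rests on the identity
\[
\frac{\diff}{\diff t}\norm{f(t)}_{\ell,\delta(t)}^2 = 2\Re\langle\partial_t f,f\rangle_{\ell,\delta(t)} - 2M\norm{f(t)}_{\ell+\frac12,\delta(t)}^2,
\]
valid for $\dot\delta=-M$, which produces, after integration, exactly the $L^2_t\mathcal{H}^{\ell+\frac12}$ control encoded in $\mathcal{E}_{M,\ell,\delta}$: a half-derivative of parabolic-type gain. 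First I would set up a linear iteration (freezing $(v^\varepsilon,\zeta^\varepsilon)$ in the transport and quadratic terms, each step solved through the Fourier-multiplier structure) and derive a closed a priori estimate for the energy $\norm{\zeta^\varepsilon(t)}_{\ell,\delta}^2+\norm{v^\varepsilon(t)}_{\ell+1,\delta}^2$ together with its $L^2_t$-gains.

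For that estimate I would check three points. (i) The dispersive terms are energy-neutral: since $\nabla\divg$ is a real symmetric Fourier multiplier, $\langle\nabla\divg\zeta^\varepsilon,\zeta^\varepsilon\rangle_{\ell,\delta}\in\mathbb{R}$, so $\Re\langle\tfrac{i\varepsilon}2\nabla\divg\zeta^\varepsilon,\zeta^\varepsilon\rangle_{\ell,\delta}=0$ exactly, and the nonlinear dispersive piece $\varepsilon\nabla(\zeta^\varepsilon\cdot\zeta^\varepsilon)$ is $O(\varepsilon)\le O(1)$ and falls under the nonlinear bookkeeping — this is what makes everything uniform in $\varepsilon\in[0,1]$. (ii) The ill-posed linear coupling is tamed by the gain rather than by cancellation, which is impossible, as the $\cosh$-instability of the Remark shows there is no positive-definite conserved energy for $\lambda<0$: bounding the weight $\langle\xi\rangle^{2\ell}\abs\xi^2\le\langle\xi\rangle^{\ell+\frac32}\langle\xi\rangle^{\ell+\frac12}$, one gets $\abs{\langle\nabla\divg v^\varepsilon,\zeta^\varepsilon\rangle_{\ell,\delta}}\lesssim\norm{v^\varepsilon}_{\ell+\frac32,\delta}\norm{\zeta^\varepsilon}_{\ell+\frac12,\delta}$, with both factors at the $L^2_t$-gained levels, hence absorbable by Young's inequality once $M$ is large. (iii) The transport and quadratic terms are handled by the algebra and tame product estimates in $\mathcal{H}_\delta$ (sub-multiplicativity of $e^{\delta\langle\xi\rangle}$ from $\langle\xi\rangle\le\langle\xi-\eta\rangle+\langle\eta\rangle$, together with $\ell>d/2$), distributing each lost derivative as half on each of two factors at the gained level while the third factor stays at the $L^\infty_t$ level bounded by the energy. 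Collecting these, the energy obeys a quadratic/cubic differential inequality that closes for $M=M(\omega_\textnormal{in},\lambda)$ large and $T=T(\omega_\textnormal{in},\lambda)$ small enough to keep $\delta(t)>0$ and the energy below a fixed threshold, uniformly in $\varepsilon$. Existence then follows by passing to the limit in the iteration (weak-$*$ compactness in $L^\infty_T\mathcal{H}$ and strong compactness in $\mathcal{C}_T\mathcal{H}^{\ell-\frac12}\times\mathcal{C}_T\mathcal{H}^{\ell+\frac12}$ via the $L^2_t$-gain and a bound on $\partial_t$), and uniqueness from the same estimate run on the difference of two solutions at the lower level $(\ell-\frac12,\ell+\frac12)$.

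For the semiclassical limit I would set $w=\zeta^\varepsilon-\zeta^0$, $r=v^\varepsilon-v^0$ and subtract the two systems; the difference solves a system that is \emph{linear} in $(w,r)$, with coefficients bounded by the uniform solution norms, plus a source $S^\varepsilon=\tfrac{i\varepsilon}2(\nabla\divg\zeta^\varepsilon+2\nabla(\zeta^\varepsilon\cdot\zeta^\varepsilon))$ and the initial error, whose energy at level $(\ell-\frac12,\ell+\frac12)$ is $(D^\varepsilon_{\ell-\frac12})^2$. The key uniform bound is $\norm{S^\varepsilon}_{L^2_T\mathcal{H}^{\ell-\frac32}}\le C\varepsilon$, which follows from $\zeta^\varepsilon\in L^2_T\mathcal{H}^{\ell+\frac12}$. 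Running the weighted energy on $(w,r)$ at level $(\ell-\frac12,\ell+\frac12)$, the source enters the \emph{squared} energy at order $\varepsilon$: there the gain reaches only $\mathcal{H}^\ell$ for $w$, half a derivative short of what a Young-type absorption of $S^\varepsilon$ would require, so one is forced to pair $S^\varepsilon$ against the non-small but uniformly bounded quantity $\norm{w}_{L^2_T\mathcal{H}^{\ell+\frac12}}\le\norm{\zeta^\varepsilon}_{L^2_T\mathcal{H}^{\ell+\frac12}}+\norm{\zeta^0}_{L^2_T\mathcal{H}^{\ell+\frac12}}\le C$. A Gronwall argument then gives $\mathcal{E}_{\mathrm{diff}}(t)\lesssim(D^\varepsilon_{\ell-\frac12})^2+\varepsilon$, and taking square roots yields the first bound, rate $\sqrt\varepsilon+D^\varepsilon_{\ell-\frac12}$. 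When $\ell>\frac{d+1}2$ the same computation can be run one half-derivative lower, at level $(\ell-1,\ell)$ — still admissible since the product estimates now require only $\ell-\frac12>\frac d2$ — and there the gain reaches $\mathcal{H}^{\ell-\frac12}$ for $w$, matching $\norm{S^\varepsilon}_{\mathcal{H}^{\ell-\frac32}}$ exactly; a genuine Young inequality then absorbs $w$ into the gain and leaves $\tfrac1{2\eta}\norm{S^\varepsilon}_{L^2_T\mathcal{H}^{\ell-\frac32}}^2\le C\varepsilon^2$, so the source enters the squared energy at order $\varepsilon^2$ and the rate improves to $\varepsilon+D^\varepsilon_{\ell-1}$.

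The hardest point, and the conceptual heart of the proof, is step (ii) of the a priori estimate: establishing that the non-skew, ill-posed linear coupling present for $\lambda<0$ is controlled purely by the $L^2_t$ half-derivative gain of the analytic norm, so that the single energy functional $\mathcal{E}_{M,\ell,\delta}$ closes uniformly for all $\lambda\neq0$ and all $\varepsilon\in[0,1]$. Everything else — the compactness for existence, the low-regularity uniqueness estimate, and the two convergence rates — then follows by replaying this weighted energy estimate at the appropriate regularity level, the only subtlety being the half-derivative accounting that distinguishes the $\sqrt\varepsilon$ and $\varepsilon$ rates.
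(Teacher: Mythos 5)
Your proposal follows the paper's proof essentially step for step: the weighted energy identity of Lemma \ref{lem:prop_anal_norm_diff} turning the shrinking radius $\delta(t)=\delta_\textnormal{in}-Mt$ into an $L^2_t$ half-derivative gain; the exact cancellation $\Re \langle f, i \nabla \divg f \rangle_{\ell, \delta} = 0$ for the $\varepsilon$-dispersive term (Lemma \ref{lem:anal_prop}, part 4), which is what makes everything uniform in $\varepsilon \in [0,1]$; the absorption of the non-hyperbolic linear coupling by splitting the weight so that $\nabla \divg v^\varepsilon$ pairs as $\norm{v^\varepsilon}_{\ell+\frac{3}{2},\delta} \norm{\zeta^\varepsilon}_{\ell+\frac{1}{2},\delta}$ with \emph{both} factors at the gained levels — precisely the role of \eqref{eq:tool6} combined with the asymmetric regularity $(\ell, \ell+1)$, and indeed the heart of the matter for $\lambda<0$; an iteration with frozen coefficients as in Section \ref{subsec:set_scheme}; uniqueness via a difference estimate at level $(\ell-\frac{1}{2},\ell+\frac{1}{2})$ with $T$ small; and, for the semiclassical limit, exactly the paper's half-derivative accounting: at level $(\ell-\frac{1}{2},\ell+\frac{1}{2})$ the source $\frac{i\varepsilon}{2}\nabla\divg\zeta^\varepsilon$ can only be paired against $\tnorm{Z^\varepsilon}_{2,T,\ell+\frac{1}{2},\delta}$, which is merely uniformly bounded, yielding $O(\varepsilon)$ in the squared energy (this is \eqref{eq:semicla_est_compl}), whereas at level $(\ell-1,\ell)$, admissible when $\ell>\frac{d+1}{2}$, a genuine Young absorption leaves $O(\varepsilon^2)$ and the improved rate — matching the paper's Section 3 and its closing remark verbatim.

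One step, however, would fail as written: existence cannot be obtained by ``passing to the limit in the iteration'' via weak-$*$ and strong compactness. The scheme \eqref{sys:scheme_zeta_v} is a recursion: if a subsequence $(\zeta^\varepsilon_{k_j}, v^\varepsilon_{k_j})$ converges, nothing forces $(\zeta^\varepsilon_{k_j+1}, v^\varepsilon_{k_j+1})$, let alone $(\zeta^\varepsilon_{k_j-1}, v^\varepsilon_{k_j-1})$, to converge to the same limit, so the limit cannot be identified as a solution of \eqref{sys:euler_modified_semicla}; compactness extraction works for approximations of the full nonlinear problem (Galerkin, mollification), not for Picard-type recursions. (A secondary issue: embeddings $\mathcal{H}^{\ell}_\delta \hookrightarrow \mathcal{H}^{\ell-\frac{1}{2}}_\delta$ are not compact on $\mathbb{R}^d$ — translates defeat them — so the Aubin--Lions step would at best give local-in-space convergence, insufficient for the global analytic norms.) The paper instead proves \emph{whole-sequence} convergence: the difference estimate \eqref{eq:est_conv_scheme_1} gives $\mathcal{I}^\varepsilon_{k+1} \leq \frac{1}{2} \mathcal{I}^\varepsilon_k$, hence a geometric Cauchy sequence in $\mathcal{C}([0,T], \mathcal{H}^{\ell-\frac{1}{2}}_\delta \times \mathcal{H}^{\ell+\frac{1}{2}}_\delta) \cap L^2((0,T), \mathcal{H}^{\ell}_\delta \times \mathcal{H}^{\ell+1}_\delta)$ (Lemmas \ref{lem:cauchy_seq_scheme} and \ref{lem:unif_sec_est_scheme}). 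The repair costs you nothing, since the required contraction is exactly the difference estimate you already invoke for uniqueness, run on consecutive iterates — with the mild extra care, also present in the paper's uniqueness proof, of first establishing by bootstrap the a priori bound \eqref{eq:est_uniqueness} on any competitor solution before comparing, and of taking $T$ small enough that $(2\abs{\lambda} + 3K^\ell)T < 1$.
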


\subsubsection{Main result on $(\psi^\varepsilon, \phi^\varepsilon)$}

Once we have a Cauchy theory for the system \eqref{sys:euler_modified_semicla}, we can define $\psi^\varepsilon$ with \eqref{eq:rel_psi_zeta}. From this definition, it is easy to prove that $\nabla \psi^\varepsilon = \zeta^\varepsilon$ like in Section \ref{subsec:sys_v_zeta}.
In a similar way, we then define $\phi^\varepsilon$ with \eqref{eq:rel_v_phi_3}. However, we can not prove directly that $\nabla \phi^\varepsilon = v^\varepsilon$. Indeed, we need \eqref{eq:conv_term_v_k} to hold, \textit{i.e.} that $v^\varepsilon$ is irrotational. This is obviously true at $t = 0$. To prove it for $t > 0$, note that taking the curl of the equation on $v^\varepsilon$ gives a linear equation on $\operatorname{curl} v^\varepsilon$.
Therefore, from the previous result, we also gain a local Cauchy theory for \eqref{sys:wkb_riemann} through the relations \eqref{eq:rel_v_phi_3} and \eqref{eq:rel_psi_zeta}. Moreover, the semiclassical limit can also be extended to these functions, which leads to define for any $k > 0$ the following constant which depends only on $\varepsilon$:
\begin{equation} \label{eq:def_D_tilde}
    (\tilde{D}^\varepsilon_k)^2 \coloneqq \norm{\psi^\varepsilon_\textnormal{in} - \psi^0_\textnormal{in}}_{k, \delta_\textnormal{in}}^2 + \norm{\phi^\varepsilon_\textnormal{in} - \phi^0_\textnormal{in}}_{k + 1, \delta_\textnormal{in}}^2.
\end{equation}
Since the assumptions for the initial data may lead to non-trivial behavior at infinity for $(\psi^\varepsilon_\textnormal{in}, \phi^\varepsilon_\textnormal{in})$, we also address the behavior at infinity (in space) of $(\psi^\varepsilon (t), \phi^\varepsilon (t))$ thanks to the relations \eqref{eq:rel_v_phi_3} and \eqref{eq:rel_psi_zeta}.

\begin{cor} \label{cor:cauchy}
    For any $(\psi_\textnormal{in}^\varepsilon, \phi_\textnormal{in}^\varepsilon)$ satisfying Assumption \ref{ass:bound}, there exists $T > 0$, $M > 0$ and $\delta = \delta (t) \coloneqq \delta_\textnormal{in} - M t$ such that, for all $\varepsilon \in [0,1]$:
    \begin{itemize}
    \item There exists a unique solution $(\psi^\varepsilon, \phi^\varepsilon) \in \mathcal{C}^2 ([0, T] \times \mathbb{R}^d)^2$ to \eqref{sys:wkb_riemann} such that $(\nabla \psi^\varepsilon, \nabla \phi^\varepsilon) \in L^\infty ((0, T), \mathcal{H}_{\delta}^\ell \times \mathcal{H}_{\delta}^{\ell+1}) \cap \mathcal{C} ([0, T], \mathcal{H}_{\delta }^{\ell - \frac{1}{2}} \times \mathcal{H}_{\delta}^{\ell + \frac{1}{2}}) \cap L^2 ((0, T), \mathcal{H}_{\delta }^{\ell+\frac{1}{2}} \times \mathcal{H}_{\delta}^{\ell+\frac{3}{2}})$.

    \item There holds
    \begin{gather*}
        \psi^\varepsilon - \psi_\textnormal{in}^\varepsilon \in H^{1}_T \mathcal{H}_{\delta}^{\ell-\frac{1}{2}} \cap \mathcal{C}_T \mathcal{H}_{\delta}^{\ell+\frac{1}{2}} \cap L^\infty_T \mathcal{H}_{\delta}^{\ell+1} \cap L^2_T \mathcal{H}_{\delta}^{\ell+\frac{3}{2}}, \label{eq:1st_point_main_cor} \\
        \phi^\varepsilon - \phi_\textnormal{in}^\varepsilon - \lambda t \, \psi_\textnormal{in}^\varepsilon  \in H^{1}_T \mathcal{H}_{\delta}^{\ell+\frac{3}{2}} \cap L^\infty_T \mathcal{H}_{\delta}^{\ell+2} \cap L^2_T \mathcal{H}_{\delta}^{\ell+\frac{5}{2}}. \label{eq:2nd_point_main_cor}
    \end{gather*}
    
    \item There exists $C > 0$ independent of $\varepsilon \in [0, 1]$ such that
    \begin{multline*}
        \tnorm{\psi^\varepsilon - \psi^0}_{\infty, T, \ell + \frac{1}{2}, \delta} + \tnorm{\phi^\varepsilon - \phi^0}_{\infty, T, \ell + \frac{3}{2}, \delta} + \tnorm{\psi^\varepsilon - \psi^0}_{2, T, \ell + 1, \delta} + \tnorm{\phi^\varepsilon - \phi^0}_{2, T, \ell + 2, \delta} \\ \leq C \Bigl( \sqrt{\varepsilon} + \tilde{D}^\varepsilon_{\ell + \frac{1}{2}} \Bigr),
    \end{multline*}
    and, if $\ell > \frac{d+1}{2}$,
        \begin{gather*}
        \tnorm{\psi^\varepsilon - \psi^0}_{\infty, T, \ell, \delta} + \tnorm{\phi^\varepsilon - \phi^0}_{\infty, T, \ell + 1, \delta} + \tnorm{\psi^\varepsilon - \psi^0}_{2, T, \ell + \frac{1}{2}, \delta} + \tnorm{\phi^\varepsilon - \phi^0}_{2, T, \ell + \frac{3}{2}, \delta} \leq C \Bigl( \varepsilon + \tilde{D}^\varepsilon_\ell \Bigr).
    \end{gather*}

    \end{itemize}
    
\end{cor}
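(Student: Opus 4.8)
The plan is to build everything on Theorem~\ref{th:main_th}, reading Corollary~\ref{cor:cauchy} as a reconstruction statement for the primitives $(\psi^\varepsilon,\phi^\varepsilon)$ of the solution $(\zeta^\varepsilon,v^\varepsilon)$ of \eqref{sys:euler_modified_semicla}. First I would \emph{define} $\psi^\varepsilon$ by \eqref{eq:rel_psi_zeta} and $\phi^\varepsilon$ by \eqref{eq:rel_v_phi_3}, and then establish the two identities $\nabla\psi^\varepsilon=\zeta^\varepsilon$ and $\nabla\phi^\varepsilon=v^\varepsilon$. The first is the computation already indicated in Section~\ref{subsec:sys_v_zeta}: differentiating \eqref{eq:rel_psi_zeta} in time gives $\partial_t(\nabla\psi^\varepsilon-\zeta^\varepsilon)=0$, and the two agree at $t=0$. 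The second is the only genuinely structural point. Since $\zeta^\varepsilon=\nabla\psi^\varepsilon$, the forcing $\Re\zeta^\varepsilon=\nabla(\Re\psi^\varepsilon)$ in the $v^\varepsilon$-equation is a gradient, so taking the curl of the first line of \eqref{sys:euler_modified_semicla} annihilates it and leaves a closed, \emph{linear} transport-type equation for $\operatorname{curl}v^\varepsilon$ with zero initial datum (as $v^\varepsilon(0)=\nabla\phi_\textnormal{in}^\varepsilon$). A Grönwall/uniqueness argument in the analytic scale then forces $\operatorname{curl}v^\varepsilon\equiv0$, which validates \eqref{eq:conv_term_v_k} and hence $\nabla\phi^\varepsilon=v^\varepsilon$. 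With both identities in hand, substituting them into \eqref{eq:rel_psi_zeta}--\eqref{eq:rel_v_phi_3} and differentiating in $t$ shows directly that $(\psi^\varepsilon,\phi^\varepsilon)$ solves \eqref{sys:wkb_riemann}; uniqueness is inherited from that of $(\zeta^\varepsilon,v^\varepsilon)$, and $\mathcal{C}^2$ regularity follows since $\delta(t)>0$ on $[0,T]$ makes every element of $\mathcal{H}^s_{\delta(t)}$ smooth in $x$, the time derivatives being read off the equations.

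For the membership statement (second bullet) I would separate high and low frequencies. The derivatives come for free: $\nabla(\psi^\varepsilon-\psi_\textnormal{in}^\varepsilon)=\zeta^\varepsilon-\zeta^\varepsilon(0)$, and $\nabla(\phi^\varepsilon-\phi_\textnormal{in}^\varepsilon-\lambda t\,\psi_\textnormal{in}^\varepsilon)$ is likewise expressed through $v^\varepsilon$ and its initial value, so the claimed regularity of the gradients is exactly the regularity of $(\zeta^\varepsilon,v^\varepsilon)$ provided by Theorem~\ref{th:main_th}. The mechanism that makes the indices close is that the analyticity radius \emph{decreases}, $\delta(t)<\delta_\textnormal{in}$ for $t>0$: the elementary identity $\langle\xi\rangle^{2s}e^{2\delta(t)\langle\xi\rangle}=\langle\xi\rangle^{2\ell}e^{2\delta_\textnormal{in}\langle\xi\rangle}\,\bigl(\langle\xi\rangle^{2(s-\ell)}e^{-2Mt\langle\xi\rangle}\bigr)$ shows that the fixed-regularity data $\zeta^\varepsilon(0)\in\mathcal{H}^\ell_{\delta_\textnormal{in}}$ and $v^\varepsilon(0)\in\mathcal{H}^{\ell+1}_{\delta_\textnormal{in}}$ can be measured in arbitrarily high $\mathcal{H}^s_{\delta(t)}$-norms, at the price of a $t$-dependent constant. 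The function itself is then recovered from the integral representations \eqref{eq:rel_psi_zeta} and \eqref{eq:rel_v_phi_3}, estimating the integrands (the bilinear $v^\varepsilon\cdot\zeta^\varepsilon$, $\zeta^\varepsilon\cdot\zeta^\varepsilon$ and the linear $\divg v^\varepsilon$, $\divg\zeta^\varepsilon$) by the product estimates available in the analytic scale $\mathcal{H}^s_\delta$ ($s>\tfrac d2$) together with the half-derivative $L^2_T$-gain of Theorem~\ref{th:main_th}. For $\phi^\varepsilon$ the crucial observation is that $\Re\psi^\varepsilon=\psi_\textnormal{in}^\varepsilon+\Re(\psi^\varepsilon-\psi_\textnormal{in}^\varepsilon)$ (the datum being real-valued), so the eikonal integral freezes $\Re\psi^\varepsilon$ at leading order to its initial value, producing the linear-in-$t$ term $\lambda t\,\psi_\textnormal{in}^\varepsilon$, which need not lie in $L^2$ when the data do not decay; removing exactly this term, as in the statement, leaves a remainder built only from $\abs{v^\varepsilon}^2$ and $\psi^\varepsilon-\psi_\textnormal{in}^\varepsilon$, which does belong to the asserted spaces. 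The $H^1_T$ statements are obtained by reading $\partial_t$ of these remainders off \eqref{sys:wkb_riemann}.

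Finally, the convergence rates (third bullet) reduce to those of Theorem~\ref{th:main_th}. Writing $\psi^\varepsilon-\psi^0$ and $\phi^\varepsilon-\phi^0$ as the differences of the integral representations and expanding each bilinear term by $ab-a'b'=(a-a')b+a'(b-b')$, I would bound every factor either by the uniform-in-$\varepsilon$ norms of $(\zeta^\varepsilon,v^\varepsilon)$ or by the differences $\zeta^\varepsilon-\zeta^0$, $v^\varepsilon-v^0$, whose sizes are precisely $\sqrt\varepsilon+D^\varepsilon_{\ell-\frac12}$ (resp. $\varepsilon+D^\varepsilon_{\ell-1}$) by Theorem~\ref{th:main_th}; the explicit prefactor $\tfrac{\varepsilon}{2}$ in front of $\divg\zeta^\varepsilon$ and $\zeta^\varepsilon\cdot\zeta^\varepsilon$ supplies the remaining genuinely $\varepsilon$-contributions. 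The initial-data discrepancies then enter at the level of $\tilde D^\varepsilon_k$ from \eqref{eq:def_D_tilde} rather than $D^\varepsilon_k$, the two being linked through the same gradient/primitive relation and the decreasing-radius bound above, while the one-index gain in the exponents relative to Theorem~\ref{th:main_th} (e.g. $\ell+\tfrac12$ for $\psi$, $\ell+\tfrac32$ for $\phi$) is nothing but the passage from $(\zeta^\varepsilon,v^\varepsilon)$ to their primitives.

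The step I expect to be the real obstacle is the index bookkeeping of the second bullet: reconciling the three competing effects — the half-derivative $L^2_T$ smoothing, the gain (singular as $t\to0$) coming from $\delta(t)<\delta_\textnormal{in}$, and the derivative losses in the products and in $\divg\zeta^\varepsilon$ — so that each of the four norms lands on the exact space claimed, and pinning down the precise leading term $\lambda t\,\psi_\textnormal{in}^\varepsilon$ that must be subtracted to accommodate non-decaying data. The irrotationality propagation, though conceptually the heart of the reconstruction, is technically the lighter part once the linear curl equation is written down.
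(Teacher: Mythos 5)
Your plan is, in outline, the paper's own proof: the paper also defines $\psi^\varepsilon$ and $\phi^\varepsilon$ through \eqref{eq:rel_psi_zeta} and \eqref{eq:rel_v_phi_3}, obtains $\nabla\psi^\varepsilon=\zeta^\varepsilon$ by differentiating in time, propagates irrotationality of $v^\varepsilon$ through the linear equation satisfied by $\operatorname{curl} v^\varepsilon$ with zero datum, identifies exactly your ``frozen'' term via $\Re\psi^\varepsilon=\psi_\textnormal{in}^\varepsilon+\Re(\psi^\varepsilon-\psi_\textnormal{in}^\varepsilon)$, leading to the subtraction $\Phi^\varepsilon\coloneqq\phi^\varepsilon-\phi_\textnormal{in}^\varepsilon-\lambda t\,\psi_\textnormal{in}^\varepsilon$ (Lemma \ref{lem:anal_dt_phi}), and proves the rates by estimating $P^\varepsilon=\psi^\varepsilon-\psi^0$, $Q^\varepsilon=\phi^\varepsilon-\phi^0$, whose evolution equations are precisely your $ab-a'b'=(a-a')b+a'(b-b')$ expansion, with $(\tilde D^\varepsilon_{\ell-\frac12})^2+(D^\varepsilon_{\ell-\frac12})^2=(\tilde D^\varepsilon_{\ell+\frac12})^2$ following from \eqref{eq:anal_deriv}. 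Your ``estimate the integrands'' step is the paper's Lemma \ref{lem:anal_dt_psi}, which gives $\partial_t\psi^\varepsilon\in L^2_T\mathcal{H}^{\ell-\frac12}_\delta$, the limiting term being $\divg\zeta^\varepsilon$, controlled only through the $L^2_T\mathcal{H}^{\ell+\frac12}_\delta$ bound on $\zeta^\varepsilon$.

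The one tool you name that would \emph{not} close the second bullet is the pointwise amplification $\langle\xi\rangle^{2(s-\ell)}e^{-2Mt\langle\xi\rangle}\lesssim (Mt)^{-2(s-\ell)}$: for $s=\ell+\frac12$ the resulting constant $t^{-1/2}$ is not square-integrable at $t=0$, and for $s=\ell+1$ it is unbounded, so measuring fixed-regularity objects in higher $\mathcal{H}^s_{\delta(t)}$-norms with $t$-singular constants can yield neither the $L^2_T\mathcal{H}^{\ell+\frac32}_\delta$ nor the $L^\infty_T\mathcal{H}^{\ell+1}_\delta$ memberships claimed for $\psi^\varepsilon-\psi_\textnormal{in}^\varepsilon$. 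The working device, and the paper's, is the \emph{integrated}, non-singular version of the same effect: by Lemma \ref{lem:prop_anal_norm_diff} one has $\frac{\diff}{\diff t}\norm{f}_{m,\delta}^2=-2M\norm{f}_{m+\frac12,\delta}^2+2\Re\langle f,\partial_t f\rangle_{m,\delta}$, and the functional $\mathcal{E}_{M,m,\delta}$ of Lemma \ref{lem:toolbox}, applied to the \emph{zero-initial-data} quantities $\Psi^\varepsilon$, $\Phi^\varepsilon$ (and later $P^\varepsilon$, $Q^\varepsilon$), converts the $L^2_T\mathcal{H}^{\ell-\frac12}_\delta$ bound on $\partial_t\Psi^\varepsilon$ into $\Psi^\varepsilon\in L^\infty_T\mathcal{H}^{\ell}_\delta\cap L^2_T\mathcal{H}^{\ell+\frac12}_\delta$ after a Young absorption; the final one-derivative upgrade to $L^\infty_T\mathcal{H}^{\ell+1}_\delta\cap L^2_T\mathcal{H}^{\ell+\frac32}_\delta$ is then the exact identity \eqref{eq:anal_deriv} combined with $\nabla\Psi^\varepsilon=\zeta^\varepsilon-\nabla\psi_\textnormal{in}^\varepsilon$ and Lemma \ref{lem:est_scheme_2}. (Note that the constant-in-time function $\nabla\psi_\textnormal{in}^\varepsilon$ does belong to $L^2_T\mathcal{H}^{\ell+\frac12}_{\delta(\cdot)}$ by the integrated identity, even though its pointwise $\mathcal{H}^{\ell+\frac12}_{\delta(t)}$-norm is only $O(t^{-1/2})$ --- precisely the gap between your pointwise mechanism and the integrated one.) The same two-step pattern, energy estimate on the zero-data difference followed by \eqref{eq:anal_deriv} and the gradient-level bounds of Theorem \ref{th:main_th}, yields the third bullet; naive Gr\"onwall on the integral representations alone would land half a derivative short of the stated indices. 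Since you flagged exactly this bookkeeping as the expected obstacle and you do invoke the half-derivative $L^2_T$ gain elsewhere, this is a misidentified tool in an otherwise correct reconstruction of the argument rather than a structural gap.
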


\begin{rem}
    $a^\varepsilon = e^{\frac{\psi^\varepsilon}{2}}$ satisfies \eqref{sys:wkb} and \eqref{eq:1st_rel} (along with $\phi^\varepsilon$ and $v^\varepsilon$ respectively). In particular, for $\varepsilon > 0$, $u^\varepsilon = e^{\frac{\psi^\varepsilon}{2} + i \frac{\phi^\varepsilon}{\varepsilon}}$ is a $\mathcal{C}^2$ solution to \eqref{eq:lognls} where $\phi^\varepsilon$ is defined by \eqref{eq:rel_v_phi_3}.
\end{rem}

\begin{rem} \label{rem:sc_eff}
    One can add any constant to $v^\varepsilon$, \textit{i.e.} any linear function to $\phi^\varepsilon$, thanks to the Galilean invariance: for any $c_0 \in \mathbb{R}^d$ and any $(\zeta^\varepsilon, v^\varepsilon)$ solution to \eqref{sys:euler_modified_semicla}, $(\zeta^\varepsilon (t, x - c_0 t), v^\varepsilon (t, x - c_0 t) + c_0)$ is also solution to \eqref{sys:euler_modified_semicla}, and a similar relation holds for $\psi^\varepsilon$ and $\phi^\varepsilon$.
    Moreover, the addition of a constant to $\psi_\textnormal{in}$ gives an explicit behaviour thanks to the effect of scaling for \eqref{eq:lognls}: if $u^\varepsilon$ is a solution to \eqref{eq:lognls} and $\kappa \in \mathbb{R}$, then
    \begin{equation*}
        u^\varepsilon(t,x) \, e^{ \kappa - \frac{2it \lambda \kappa}{\varepsilon}}
    \end{equation*}
    also solves \eqref{eq:lognls} (with initial datum $e^{\kappa} u^\varepsilon (0)$). The corresponding relation for $(\psi^\varepsilon, \phi^\varepsilon)$ is that, if $(\psi^\varepsilon, \phi^\varepsilon)$ satisfies \eqref{sys:wkb_riemann}, then $(\psi^\varepsilon + 2 \kappa, \phi^\varepsilon - 2 \lambda \kappa t)$ also satisfies \eqref{sys:wkb_riemann}, and this also holds for $\varepsilon = 0$.
    The second part of Corollary \ref{cor:cauchy}, in particular the term $\lambda t \, \psi_\textnormal{in}^\varepsilon$, is therefore consistent with the effect of the scaling.
\end{rem}

\begin{rem}
    In the case where $\psi_\textnormal{in}^\varepsilon$ and $\phi_\textnormal{in}^\varepsilon$ are independent of $\varepsilon$, the convergences are in $O (\sqrt{\varepsilon})$ and $O (\varepsilon)$ respectively for the semiclassical parts in Theorem \ref{th:main_th} and Corollary \ref{cor:cauchy}. Actually, if $\ell > \frac{d+1}{2}$, the first case can be deduced from the second case (in both the second part of Theorem \ref{th:main_th} and the third one of Corollary \ref{cor:cauchy}) by the fact that $(\zeta^\varepsilon, v^\varepsilon)$ is uniformly bounded (for $\varepsilon \in [0, 1]$) in $L^\infty ((0, T), \mathcal{H}_{\delta}^\ell \times \mathcal{H}_{\delta}^{\ell+1}) \cap L^2 ((0, T), \mathcal{H}_{\delta}^{\ell+\frac{1}{2}} \times \mathcal{H}_{\delta}^{\ell+\frac{3}{2}})$.
\end{rem}

The second part of Corollary \ref{cor:cauchy} gives useful information about the behavior of $\psi^\varepsilon (t)$ and $\phi^\varepsilon (t)$ at infinity in space, for $t > 0$, in particular if $\psi_\textnormal{in}^\varepsilon$ and $\phi_\textnormal{in}^\varepsilon$ do not have a trivial behavior at infinity. Of course, if $\psi_\textnormal{in}^\varepsilon$ and $\phi_\textnormal{in}^\varepsilon$ are analytic themselves, we have the following properties.

\begin{cor} \label{cor:psi_phi_anal_2}
    Assume that $(\psi_\textnormal{in}^\varepsilon, \phi_\textnormal{in}^\varepsilon)$ satisfies Assumption \ref{ass:bound}. Define $T$, $\delta (t)$ and $(\zeta^\varepsilon, v^\varepsilon)$ given by Corollary \ref{cor:cauchy}. If $\psi_\textnormal{in}^\varepsilon \in \mathcal{H}_{\delta_\textnormal{in}}^{\ell+1}$, then
    \begin{equation*}
        \psi^\varepsilon \in \mathcal{C}_T \mathcal{H}_{\delta}^{\ell+\frac{1}{2}} \cap L^\infty_T \mathcal{H}_{\delta}^{\ell+1} \cap L^2_T \mathcal{H}_{\delta}^{\ell+\frac{3}{2}}.
    \end{equation*}
    Furthermore, if we also have $\phi_\textnormal{in}^\varepsilon \in \mathcal{H}_{\delta_\textnormal{in}}^{\ell+2}$, then
    \begin{equation*}
        \phi^\varepsilon \in \mathcal{C}_T \mathcal{H}_{\delta}^{\ell+\frac{3}{2}} \cap L^\infty_T \mathcal{H}_{\delta}^{\ell+2} \cap L^2_T \mathcal{H}_{\delta}^{\ell+\frac{5}{2}}.
    \end{equation*}
\end{cor}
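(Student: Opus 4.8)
The plan is to obtain both memberships from the second point of Corollary~\ref{cor:cauchy} by restoring the time-independent and affine-in-time profiles carried by the initial data. Write
\[
    \psi^\varepsilon = \psi_\textnormal{in}^\varepsilon + (\psi^\varepsilon - \psi_\textnormal{in}^\varepsilon), \qquad \phi^\varepsilon = \phi_\textnormal{in}^\varepsilon + \lambda t \, \psi_\textnormal{in}^\varepsilon + \bigl( \phi^\varepsilon - \phi_\textnormal{in}^\varepsilon - \lambda t \, \psi_\textnormal{in}^\varepsilon \bigr).
\]
By Corollary~\ref{cor:cauchy}, the increment $\psi^\varepsilon - \psi_\textnormal{in}^\varepsilon$ already lies in $\mathcal{C}_T \mathcal{H}_{\delta}^{\ell+\frac{1}{2}} \cap L^\infty_T \mathcal{H}_{\delta}^{\ell+1} \cap L^2_T \mathcal{H}_{\delta}^{\ell+\frac{3}{2}}$, while $\phi^\varepsilon - \phi_\textnormal{in}^\varepsilon - \lambda t \, \psi_\textnormal{in}^\varepsilon$ lies in $H^1_T \mathcal{H}_{\delta}^{\ell+\frac{3}{2}} \cap L^\infty_T \mathcal{H}_{\delta}^{\ell+2} \cap L^2_T \mathcal{H}_{\delta}^{\ell+\frac{5}{2}}$; using the one-dimensional-in-time embedding $H^1_T X \hookrightarrow \mathcal{C}_T X$, the latter increment also belongs to $\mathcal{C}_T \mathcal{H}_{\delta}^{\ell+\frac{3}{2}}$. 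It thus remains only to show that the time-independent profile $\psi_\textnormal{in}^\varepsilon$ and the affine profile $\phi_\textnormal{in}^\varepsilon + \lambda t \, \psi_\textnormal{in}^\varepsilon$ belong to the spaces claimed for $\psi^\varepsilon$ and $\phi^\varepsilon$.

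The heart of the argument is the elementary fact that, because $\delta(t) = \delta_\textnormal{in} - M t$ is strictly decreasing, time integration against the weight $e^{2\delta(t)\langle\xi\rangle}$ gains regularity. For a time-independent $f$ one factorizes, for any $m$ and $\sigma \geq 0$,
\[
    \langle\xi\rangle^{2m} e^{2\delta(t)\langle\xi\rangle} = \langle\xi\rangle^{2(m-\sigma)} e^{2\delta_\textnormal{in}\langle\xi\rangle} \cdot \langle\xi\rangle^{2\sigma} e^{-2Mt\langle\xi\rangle},
\]
and invokes the pointwise bound $\sup_{s \geq 1} s^{2\sigma} e^{-2Mts} \leq C_{\sigma,M}\, t^{-2\sigma}$ for $t \in (0,T]$, together with the elementary time integral $\int_0^T \langle\xi\rangle^{2\sigma} e^{-2Mt\langle\xi\rangle} \diff t \leq \frac{1}{2M}\langle\xi\rangle^{2\sigma-1}$. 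Applied to $f = \psi_\textnormal{in}^\varepsilon \in \mathcal{H}_{\delta_\textnormal{in}}^{\ell+1}$, monotonicity of the weight gives $\norm{\psi_\textnormal{in}^\varepsilon}_{\ell+1, \delta(t)} \leq \norm{\psi_\textnormal{in}^\varepsilon}_{\ell+1, \delta_\textnormal{in}}$, hence $\psi_\textnormal{in}^\varepsilon \in L^\infty_T \mathcal{H}_{\delta}^{\ell+1}$; the choice $\sigma = \tfrac{1}{2}$ yields the half-derivative gain $\int_0^T \norm{\psi_\textnormal{in}^\varepsilon}_{\ell+\frac{3}{2}, \delta(t)}^2 \diff t \leq \frac{1}{2M}\norm{\psi_\textnormal{in}^\varepsilon}_{\ell+1, \delta_\textnormal{in}}^2$, hence $\psi_\textnormal{in}^\varepsilon \in L^2_T \mathcal{H}_{\delta}^{\ell+\frac{3}{2}}$. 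Continuity into $\mathcal{C}_T \mathcal{H}_{\delta}^{\ell+\frac{1}{2}}$ follows by dominated convergence applied to $\norm{\mathcal{F}^{-1}\bigl( (w_{\delta(t)} - w_{\delta(s)}) \widehat{\psi_\textnormal{in}^\varepsilon} \bigr)}_{H^{\ell+\frac{1}{2}}}^2$, whose integrand vanishes pointwise as $s \to t$ and is dominated by $4 \langle\xi\rangle^{2\ell+2} e^{2\delta_\textnormal{in}\langle\xi\rangle} \abs{\widehat{\psi_\textnormal{in}^\varepsilon}}^2 \in L^1$.

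For $\phi^\varepsilon$, the same estimates applied to $\phi_\textnormal{in}^\varepsilon \in \mathcal{H}_{\delta_\textnormal{in}}^{\ell+2}$ give directly $\phi_\textnormal{in}^\varepsilon \in \mathcal{C}_T \mathcal{H}_{\delta}^{\ell+\frac{3}{2}} \cap L^\infty_T \mathcal{H}_{\delta}^{\ell+2} \cap L^2_T \mathcal{H}_{\delta}^{\ell+\frac{5}{2}}$. The only genuinely new term is $\lambda t \, \psi_\textnormal{in}^\varepsilon$, which demands one derivative more than the assumed $\mathcal{H}_{\delta_\textnormal{in}}^{\ell+1}$ regularity of $\psi_\textnormal{in}^\varepsilon$; here the explicit prefactor $t$ is exactly what compensates the resulting $t^{-1}$ singularity. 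Taking $\sigma = 1$ gives $\norm{\psi_\textnormal{in}^\varepsilon}_{\ell+2, \delta(t)} \leq \frac{C}{t}\norm{\psi_\textnormal{in}^\varepsilon}_{\ell+1, \delta_\textnormal{in}}$, so that $\norm{t\, \psi_\textnormal{in}^\varepsilon}_{\ell+2, \delta(t)}$ stays bounded on $(0,T]$, whence $t\, \psi_\textnormal{in}^\varepsilon \in L^\infty_T \mathcal{H}_{\delta}^{\ell+2}$; using $\int_0^\infty t^2 e^{-2Mt\langle\xi\rangle} \diff t = \frac{1}{4M^3}\langle\xi\rangle^{-3}$ one gets $\int_0^T t^2 \norm{\psi_\textnormal{in}^\varepsilon}_{\ell+\frac{5}{2}, \delta(t)}^2 \diff t \leq \frac{1}{4M^3}\norm{\psi_\textnormal{in}^\varepsilon}_{\ell+1, \delta_\textnormal{in}}^2$, whence $t\, \psi_\textnormal{in}^\varepsilon \in L^2_T \mathcal{H}_{\delta}^{\ell+\frac{5}{2}}$; finally $\norm{t\, \psi_\textnormal{in}^\varepsilon}_{\ell+\frac{3}{2}, \delta(t)} \leq C \sqrt{t} \to 0$ as $t \to 0$ gives continuity into $\mathcal{H}_{\delta}^{\ell+\frac{3}{2}}$ with value $0$ at $t = 0$ (continuity for $t > 0$ being handled as above). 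Summing the three contributions for each of $\psi^\varepsilon$ and $\phi^\varepsilon$ yields the two stated memberships. I do not anticipate any serious obstacle: the whole content is the derivative-gaining time integrals above, which is precisely the smoothing mechanism already exploited in Corollary~\ref{cor:cauchy}; the one point requiring care is the bookkeeping ensuring that the factor $t$ in the $\phi$-profile exactly cancels the $t^{-1}$ coming from the one-derivative gain.
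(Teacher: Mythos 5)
Your proof is correct and follows the route the paper intends (and leaves implicit): decompose via the second point of Corollary \ref{cor:cauchy} into the increments plus the profiles $\psi_\textnormal{in}^\varepsilon$ and $\phi_\textnormal{in}^\varepsilon + \lambda t\,\psi_\textnormal{in}^\varepsilon$, and absorb the profiles using the half-derivative (resp.\ full-derivative, compensated by the factor $t$) gain produced by the shrinking radius $\delta(t) = \delta_\textnormal{in} - Mt$. Your explicit Fourier-side weight computations are exactly the mechanism the paper packages in Lemma \ref{lem:prop_anal_norm_diff} and Lemma \ref{lem:toolbox} — one could equivalently apply Lemma \ref{lem:toolbox} with $f = \psi_\textnormal{in}^\varepsilon$, $F = 0$ and with $f = t\,\psi_\textnormal{in}^\varepsilon$, $F = \psi_\textnormal{in}^\varepsilon$ — so the two arguments coincide in substance.
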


\begin{rem}
    From the second part of Corollary \ref{cor:cauchy}, we know that the behaviour at infinity of $\psi^\varepsilon (t)$ is the same as $\psi_\textnormal{in}^\varepsilon$ for all $t$. Moreover, the behaviour of $\phi^\varepsilon$ is consistent with the effect of scaling for \eqref{eq:lognls} (see Remark \ref{rem:sc_eff}).
    
    In particular, if $\psi_\textnormal{in}^\varepsilon \in \mathcal{H}_{\delta_\textnormal{in}}^\ell$, then $\psi_\textnormal{in}^\varepsilon (x)$ (and then also $\psi^\varepsilon (t,x)$ for all $t \in [0, T]$) goes to 0 when $\abs{x} \rightarrow \infty$, which means that $\abs{u^\varepsilon (t,x)}$ goes to 1 (or any another positive constant if we add a constant to $\psi^\varepsilon$ with Remark \ref{rem:sc_eff}) when $x \rightarrow \infty$. This is therefore linked to the Gross-Pitaevskii problem.
    
    Yet, if $\psi_\textnormal{in}^\varepsilon$ goes to $- \infty$ at infinity, then so does $\psi^\varepsilon (t)$ for any $t \in [0, T]$, which means that we are close to vacuum at infinity at any time for \eqref{eq:lognls} and \eqref{eq:iso_euler}. More generally, if $\psi_\textnormal{in}^\varepsilon$ is bounded by above, then so are $\psi^\varepsilon (t)$ and $\abs{u^\varepsilon (t)}$ for any $t \in [0, T]$. Moreover, in any case and for any compact subset $K \subset \mathbb{R}^d$ and any $k \in \mathbb{N}$, $\psi^\varepsilon (t)$ and $u^\varepsilon (t)$ are $\mathcal{C}^k (K)$ and all the above convergences and properties hold by substituting the analytic spaces in space by $\mathcal{C}^k (K)$ due to the fact $\mathcal{H}^\ell_\delta \subset \mathcal{C}^k (K)$ for any $\delta > 0$ and $\ell \in \mathbb{N}$.
\end{rem}

\subsubsection{Semiclassical limit}

The convergence given in both Theorem \ref{th:main_th} and Corollary \ref{cor:cauchy} suffices to infer the convergence of quadratic observables in some way as soon as the initial data converge. Therefore, we state the following assumptions (we recall that $\tilde{D}_k^\varepsilon$ is defined in \eqref{eq:def_D_tilde}):

\begin{assump} \label{ass:conv1}
    There exists $C > 0$ such that:
    \begin{equation*}
        \tilde{D}_{\ell + \frac{1}{2}}^\varepsilon \leq C \sqrt{\varepsilon}.
    \end{equation*}
\end{assump}

\begin{assump} \label{ass:conv2}
    $\ell > \frac{d+1}{2}$ and there exists $C > 0$ such that:
    \begin{equation*}
        D_{\ell}^\varepsilon \leq C \varepsilon.
    \end{equation*}
\end{assump}

\begin{cor}
    Under Assumption \ref{ass:bound} and Assumption \ref{ass:conv1} or \ref{ass:conv2}, the position and momentum densities converge in the following sense: for any compact subset $K \subset \mathbb{R}^d$, any $k \in \mathbb{N}$ and all $T' < \frac{\delta_\textnormal{in}}{M}$ such that $T' \leq T$, there holds
    \begin{equation*}
        \abs{u^\varepsilon}^2 \underset{\varepsilon \rightarrow 0}{\longrightarrow} e^{\psi^0}, \quad \text{ and } \quad \Im (\varepsilon \overline{u^\varepsilon} \nabla u^\varepsilon) \underset{\varepsilon \rightarrow 0}{\longrightarrow} e^{\psi^0} \, v^0, \quad \text{ in } L^\infty ((0, T'), \mathcal{C}^k (K)).
    \end{equation*}
    Furthermore, if all $\psi^\varepsilon_\textnormal{in} (x)$ are uniformly bounded by above, then all $\psi^\varepsilon (t,x)$ are uniformly bounded by above in $(0, T') \times \mathbb{R}^d$ and there holds for all $k \in \mathbb{N}$
    \begin{equation*}
        \tnorm{\abs{u^\varepsilon}^2 - e^{\psi^0}}_{L_{T'}^\infty H^k} + \tnorm{\Im (\varepsilon \overline{u^\varepsilon} \nabla u^\varepsilon) - e^{\psi^0} \, v^0}_{L_{T'}^\infty H^k} \underset{\varepsilon \rightarrow 0}{\longrightarrow} 0.
    \end{equation*}
\end{cor}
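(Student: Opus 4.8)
The plan is to reduce both convergences to the estimates already proved in Theorem~\ref{th:main_th} and Corollary~\ref{cor:cauchy} by means of the explicit algebraic identities relating the observables to $(\psi^\varepsilon, \zeta^\varepsilon, v^\varepsilon)$. Since $u^\varepsilon = e^{\psi^\varepsilon/2 + i\phi^\varepsilon/\varepsilon}$ with $\phi^\varepsilon$ real, one has $\nabla u^\varepsilon = u^\varepsilon \bigl( \tfrac12 \zeta^\varepsilon + \tfrac{i}{\varepsilon} v^\varepsilon \bigr)$, and, using that $\abs{u^\varepsilon}^2$ and $v^\varepsilon$ are real while $\zeta^\varepsilon$ may be complex,
\begin{equation*}
    \abs{u^\varepsilon}^2 = e^{\Re \psi^\varepsilon}, \qquad \Im \bigl( \varepsilon \overline{u^\varepsilon} \nabla u^\varepsilon \bigr) = e^{\Re \psi^\varepsilon} \Bigl( v^\varepsilon + \tfrac{\varepsilon}{2} \Im \zeta^\varepsilon \Bigr).
\end{equation*}
As $\psi^0$ is real, the target quantities $e^{\psi^0}$ and $e^{\psi^0} v^0$ are exactly the formal $\varepsilon = 0$ values of these identities, so it only remains to pass to the limit in each term.

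First I would treat the local $\mathcal{C}^k(K)$ statement. Fixing $T' < \delta_\textnormal{in}/M$ with $T' \leq T$ ensures $\delta(t) \geq \delta_* \coloneqq \delta_\textnormal{in} - M T' > 0$ on $(0, T')$, so each weight $e^{2 \delta(t) \langle \xi \rangle}$ dominates $e^{2 \delta_* \langle \xi \rangle}$ and each $\mathcal{H}^m_{\delta(t)}$ norm controls the corresponding $\mathcal{H}^m_{\delta_*}$ norm. The assumptions guarantee, through Corollary~\ref{cor:cauchy} and Theorem~\ref{th:main_th}, that the right-hand sides of the convergence estimates tend to $0$, whence $\psi^\varepsilon - \psi^0 \to 0$ and $v^\varepsilon - v^0 \to 0$ in $L^\infty_{T'} \mathcal{H}^{\ell + 1/2}_{\delta_*}$, while $\zeta^\varepsilon$ stays uniformly bounded. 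Since $\delta_* > 0$, the elementary bound $\langle \xi \rangle^{2k} \leq C_k \, e^{2 \delta_* \langle \xi \rangle}$ yields $\mathcal{H}^m_{\delta_*} \hookrightarrow H^k(\mathbb{R}^d) \hookrightarrow \mathcal{C}^k(K)$ for every $k$, so all these differences also tend to $0$ in $L^\infty_{T'} \mathcal{C}^k(K)$. To pass the exponential through, I would write $e^{\Re \psi^\varepsilon} - e^{\psi^0} = e^{\psi^0} \bigl( e^{\Re(\psi^\varepsilon - \psi^0)} - 1 \bigr)$ and combine the uniform $L^\infty_{T'} \mathcal{C}^k(K)$ bound on $e^{\psi^0}$ with $\Re(\psi^\varepsilon - \psi^0) \to 0$ in $L^\infty_{T'} \mathcal{C}^k(K)$ and the smoothness of $z \mapsto e^z$, obtaining $\abs{u^\varepsilon}^2 \to e^{\psi^0}$. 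Feeding this and $v^\varepsilon \to v^0$ into the momentum identity, the contribution $\tfrac{\varepsilon}{2} e^{\Re \psi^\varepsilon} \Im \zeta^\varepsilon$ vanishes by its $\varepsilon$ prefactor together with the uniform $\mathcal{C}^k(K)$ bound on $e^{\Re \psi^\varepsilon} \Im \zeta^\varepsilon$, while $e^{\Re \psi^\varepsilon} v^\varepsilon \to e^{\psi^0} v^0$ by the Leibniz rule in $\mathcal{C}^k(K)$.

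For the global $H^k$ statement I would first transfer the upper bound: since $\psi^\varepsilon - \psi^\varepsilon_\textnormal{in} \in L^\infty_T \mathcal{H}^{\ell+1}_\delta \hookrightarrow L^\infty_T L^\infty$ (as $\ell + 1 > d/2$) uniformly in $\varepsilon$, a uniform upper bound on $\psi^\varepsilon_\textnormal{in}$ transfers to a uniform upper bound on $\Re \psi^\varepsilon$ on $(0, T') \times \mathbb{R}^d$; combined with $\nabla \psi^\varepsilon = \zeta^\varepsilon$ controlled in the analytic scale, this bounds $e^{\Re \psi^\varepsilon}$ and $e^{\psi^0}$ uniformly in $L^\infty_{T'} W^{k, \infty}$. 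Because the data differences $\psi^\varepsilon_\textnormal{in} - \psi^0_\textnormal{in}$ lie in the analytic space and the solution differences lie in $L^\infty_T \mathcal{H}^{\, \cdot}_\delta$, the quantity $\Re \psi^\varepsilon - \psi^0$ belongs to $L^\infty_{T'} H^k$ for every $k$ and tends to $0$ there. The same factorization, now estimated through the $H^k$-algebra property ($k > d/2$) and the tame bound $\norm{fg}_{H^k} \lesssim \norm{f}_{W^{k,\infty}} \norm{g}_{H^k}$, gives $\abs{u^\varepsilon}^2 - e^{\psi^0} \to 0$ in $L^\infty_{T'} H^k$, and the previous splitting of the momentum identity concludes (the rate $\varepsilon$ term again being killed by its prefactor).

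The routine inputs are the quoted convergences; the genuine difficulty is the nonlinear composition with the exponential. Locally this is harmless (a fixed compact, a smooth map, uniform bounds), but globally $z \mapsto e^z$ is not Lipschitz on all of $\mathbb{R}$, so the crux is precisely to exploit the uniform upper bound on $\psi^\varepsilon$ to keep $e^{\Re \psi^\varepsilon}$ in a fixed bounded subset of $W^{k,\infty}$ on which the composition is tame; combined with the $H^k$-decay of $\Re \psi^\varepsilon - \psi^0$, this is what closes the argument. (The consistency of this upper-bound transfer with the scaling freedom of Remark~\ref{rem:sc_eff} is what makes the hypothesis natural.)
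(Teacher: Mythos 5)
Your proof is correct and follows the route the paper intends: the paper states this corollary without a separate proof, as a direct consequence of Theorem \ref{th:main_th} and Corollary \ref{cor:cauchy} via the identities $\abs{u^\varepsilon}^2 = e^{\Re \psi^\varepsilon}$ and $\Im ( \varepsilon \overline{u^\varepsilon} \nabla u^\varepsilon ) = e^{\Re \psi^\varepsilon} \bigl( v^\varepsilon + \frac{\varepsilon}{2} \Im \zeta^\varepsilon \bigr)$, the uniform analytic bounds on $[0, T']$ where $\delta(t) \geq \delta_\textnormal{in} - M T' > 0$, and the embeddings of $\mathcal{H}^m_{\delta_*}$ into every $H^k$ and $\mathcal{C}^k(K)$, which is exactly what you carry out. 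Your treatment of the composition with the exponential --- locally via uniform bounds on compacts, globally via transferring the uniform upper bound from $\psi^\varepsilon_\textnormal{in}$ to $\Re \psi^\varepsilon$ and using tame product/Moser-type estimates in $H^k$ --- is precisely the intended mechanism.
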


However, regarding convergence of the wave functions, the previous result is not sufficient. Indeed, as fast as $\phi^\varepsilon_\textnormal{in}$ and $\psi^\varepsilon_\textnormal{in}$ may converge as $\varepsilon \rightarrow 0$, Corollary \ref{cor:cauchy} guarantees at most that $\phi^\varepsilon - \phi^0 = O(\varepsilon)$, which only ensures that $e^{\frac{\psi^\varepsilon}{2} + i \frac{\phi^\varepsilon}{\varepsilon}} - e^{\frac{\psi^0}{2} + i \frac{\phi^0}{\varepsilon}} = O(1)$ due to the rapid oscillations. In order to get a better approximation, we have to approximate $\phi^\varepsilon$ up to an error $o(\varepsilon)$ by adding a corrective term. For this purpose, we consider the system obtained by linearizing \eqref{sys:wkb_riemann} around $(\psi^0, \phi^0)$, with $(\psi_{\textnormal{in}, 1}, \phi_{\textnormal{in}, 1})$ real-valued initial data:
\begin{System} \label{sys:wkb_riemann2}
    \partial_t \phi_1 + v^0 \cdot \nabla \phi_1 + \lambda \Re \psi_1 = 0, \qquad \qquad &\phi_1 (0) = \phi_{\textnormal{in}, 1}, \\
    \partial_t \psi_1 + v^0 \cdot \nabla \psi_1 + \nabla \phi_1 \cdot \zeta^0 + \Delta \phi_1 = \frac{i}{2} \Bigl( \divg \zeta^0 + 2 \, \zeta^0 \cdot \zeta^0 \Bigr), \qquad \qquad  &\psi_1 (0) = \psi_{\textnormal{in}, 1}.
\end{System}
In the same way as previously, determining $\phi^\varepsilon_1$ and $\psi^\varepsilon_1$ is equivalent to determining $v^\varepsilon_1 = \nabla \phi^\varepsilon_1$ and $\zeta^\varepsilon_1 = \nabla \psi^\varepsilon_1$ solution to
\begin{System} \label{sys:euler_modified_semicla2}
    \partial_t v_1 + \nabla ( v^0 \cdot v_1 ) + \lambda \, \Re \zeta_1 = 0, \qquad \qquad &v_1 (0) = \nabla \phi_{\textnormal{in}, 1}, \\
    \partial_t \zeta_1 + \nabla ( v^0 \cdot \zeta_1 ) + \nabla ( v_1 \cdot \zeta^0) + \nabla \divg v_1 = \frac{i}{2} \Bigl( \nabla \divg \zeta^0 + 2 \, \nabla ( \zeta^0 \cdot \zeta^0 ) \Bigr), \qquad \qquad  &\zeta_1 (0) = \nabla \psi_{\textnormal{in}, 1},
\end{System}
along with the relations
\begin{gather*}
    \psi_1 (t, x) = \psi_{\textnormal{in}, 1} + \int_0^t \Bigl[ \frac{i}{2} \Bigl( \divg \zeta^0 (\tau, x) + 2 \, \zeta^0 (\tau, x) \cdot \zeta^0 (\tau, x) \Bigr) - v^0 (\tau, x) \cdot \zeta_1 (\tau, x) - v_1 (\tau, x) \cdot \zeta^0 (\tau, x) \Bigr] \diff \tau, \\
    \phi_1 (t,x) = \phi_{\textnormal{in}, 1} - \int_0^t \Bigl( v^0 (\tau, x) \cdot v_1 (\tau, x) + \lambda \Re \psi_1 (\tau, x) \Bigr) \diff \tau.
\end{gather*}

\begin{rem}
    This is also equivalent to linearize \eqref{sys:euler_modified_semicla} around $(\zeta^0, v^0)$.
\end{rem}

Provided $(\zeta^0, v^0)$ satisfying the conclusion of Theorem \ref{th:main_th} and $(\nabla \psi_{\textnormal{in}, 1}, \nabla \phi_{\textnormal{in}, 1}) \in \mathcal{H}_{\delta_\textnormal{in}}^m \times \mathcal{H}_{\delta_\textnormal{in}}^{m+1}$ with $\frac{d-1}{2} < m \leq l-1$, we will see that the solution $(\zeta_1, v_1)$ to \eqref{sys:euler_modified_semicla2} belongs to $(L^\infty_T \mathcal{H}_\delta^{m} \times L^\infty_T \mathcal{H}_\delta^{m+1}) \cap (L^2_T \mathcal{H}_\delta^{m+\frac{1}{2}} \times L^2_T \mathcal{H}_\delta^{m+\frac{3}{2}})$.
Similarly, if $(\psi_{\textnormal{in}, 1}, \phi_{\textnormal{in}, 1}) \in \mathcal{H}_{\delta_\textnormal{in}}^{m+1} \times \mathcal{H}_{\delta_\textnormal{in}}^{m+2}$, we will see that the solution $(\psi_1, \phi_1)$ to \eqref{sys:wkb_riemann2} belongs to $(L^\infty_T \mathcal{H}_\delta^{m+1} \times L^\infty_T \mathcal{H}_\delta^{m+2}) \cap (L^2_T \mathcal{H}_\delta^{m+\frac{3}{2}} \times L^2_T \mathcal{H}_\delta^{m+\frac{5}{2}})$.
The appearance of these correctors, and in particular regarding cases where they are trivial or not, have already been discussed in \cite{Carles_book} in a more classical WKB framework. However, our context is a bit more particular, and we have the following:
\begin{lem} \label{lem:semicla_cond}
    $\phi_1 (t) \equiv 0$ on $[0, T]$ if and only if $(\psi_{\textnormal{in}, 1}, \phi_{\textnormal{in}, 1}) \equiv (0,0)$.
\end{lem}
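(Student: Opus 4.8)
The plan is to exploit the fact that the limit profile $(\zeta^0, v^0)$ is \emph{real-valued}, which forces the source term in the second equation of \eqref{sys:wkb_riemann2} to be \emph{purely imaginary} and makes the real and imaginary parts of $\psi_1$ decouple. Indeed, $(\psi^0, \phi^0)$ solves the limit system \eqref{sys:euler_riemann} with real-valued initial data — recall that $\psi_\textnormal{in}^0 = \ln \rho_\textnormal{in}^0$ is real — and \eqref{sys:euler_riemann} has real coefficients, so $(\psi^0, v^0)$, hence $\zeta^0 = \nabla \psi^0$, remain real-valued on $[0,T]$ by uniqueness. Consequently $\divg \zeta^0 + 2\, \zeta^0 \cdot \zeta^0$ is real, and the forcing $\frac{i}{2}(\divg \zeta^0 + 2\, \zeta^0 \cdot \zeta^0)$ in \eqref{sys:wkb_riemann2} is purely imaginary; I write it as $i S$ with $S \coloneqq \frac{1}{2}(\divg \zeta^0 + 2\, \zeta^0 \cdot \zeta^0)$ real. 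Note also that $\phi_1$ stays real-valued, since its equation is a transport equation with real coefficients and real forcing $-\lambda \Re \psi_1$.

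For the direct implication, assume $\phi_1 \equiv 0$ on $[0,T]$. Then $\partial_t \phi_1 \equiv 0$ and $\nabla \phi_1 \equiv 0$, so the first equation of \eqref{sys:wkb_riemann2} reduces to $\lambda \Re \psi_1 \equiv 0$; since $\lambda \neq 0$ this gives $\Re \psi_1 \equiv 0$. Evaluating at $t = 0$ yields $\phi_{\textnormal{in}, 1} = \phi_1(0) = 0$ and $\Re \psi_{\textnormal{in}, 1} = \Re \psi_1(0) = 0$; as $\psi_{\textnormal{in}, 1}$ is real-valued, this forces $\psi_{\textnormal{in}, 1} = 0$, whence $(\psi_{\textnormal{in}, 1}, \phi_{\textnormal{in}, 1}) = (0,0)$.

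For the converse, suppose $(\psi_{\textnormal{in}, 1}, \phi_{\textnormal{in}, 1}) = (0,0)$ and split $\psi_1 = p + i q$ with $p, q$ real. Taking real and imaginary parts of the second equation of \eqref{sys:wkb_riemann2} and using that $v^0, \zeta^0, \phi_1$ are real while the forcing equals $i S$, I find that $q$ satisfies the forced transport equation $\partial_t q + v^0 \cdot \nabla q = S$ with $q(0) = 0$, whereas the pair $(\phi_1, p)$ solves the \emph{closed, homogeneous} linear system
\begin{align*}
    \partial_t \phi_1 + v^0 \cdot \nabla \phi_1 + \lambda p &= 0, \\
    \partial_t p + v^0 \cdot \nabla p + \nabla \phi_1 \cdot \zeta^0 + \Delta \phi_1 &= 0,
\end{align*}
with zero initial data $\phi_1(0) = 0$, $p(0) = 0$. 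This subsystem is exactly the real part of the linearized system, for which the Cauchy theory (uniqueness in the relevant analytic class established for \eqref{sys:euler_modified_semicla2}, restricted to real-valued data) applies; by uniqueness $(\phi_1, p) \equiv (0,0)$ on $[0,T]$. In particular $\phi_1 \equiv 0$, which is the claim. The imaginary part $q$ is generically nonzero, driven by $S$, but it does not feed back into $\phi_1$.

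The content of the lemma rests entirely on the structural observation of the first paragraph — the purely imaginary nature of the forcing and the resulting real/imaginary decoupling of $\psi_1$. Once this is in place, both implications are immediate. The only point requiring a little care is the appeal to uniqueness for the homogeneous subsystem $(\phi_1, p)$, which is inherited from the linear well-posedness theory for \eqref{sys:euler_modified_semicla2} and does not pose a genuine difficulty.
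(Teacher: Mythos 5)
Your proof is correct and takes essentially the same route as the paper: the direct implication reads $\Re \psi_1 \equiv 0$ off the first equation of \eqref{sys:wkb_riemann2} and evaluates at $t=0$, while the converse observes that $(\Re \psi_1, \phi_1)$ solves a closed, homogeneous linear system (the forcing being purely imaginary because $\zeta^0$ is real) and concludes by uniqueness. Your explicit justification that $(\zeta^0, v^0)$ remains real-valued, and hence that the imaginary part $q$ decouples without feeding back into $\phi_1$, is a detail the paper leaves implicit, but the argument is the same.
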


From the previous discussion, we make the following assumptions:
\begin{assump} \label{ass:semicla1}
    $\ell > \frac{d+3}{2}$ and there exists $(\psi_{\textnormal{in}, 1}, \phi_{\textnormal{in}, 1})$ such that
    \begin{equation*}
        r_{\ell - 2}^\varepsilon \coloneqq \norm{\nabla \psi_\textnormal{in}^\varepsilon - (\nabla \psi_\textnormal{in}^0 + \varepsilon \nabla \psi_{\textnormal{in}, 1})}_{\ell - 2, \delta} + \norm{\nabla \phi_\textnormal{in}^\varepsilon - (\nabla \phi_\textnormal{in}^0 + \varepsilon \nabla \phi_{\textnormal{in}, 1})}_{\ell - 1, \delta} = o(\varepsilon) \qquad \text{as } \varepsilon \rightarrow 0.
    \end{equation*}
\end{assump}
\begin{assump} \label{ass:semicla2}
    $\ell > \frac{d+3}{2}$ and there exists $(\psi_{\textnormal{in}, 1}, \phi_{\textnormal{in}, 1})$ such that
    \begin{equation*}
        \tilde{r}_{\ell - 1}^\varepsilon \coloneqq \norm{\psi_\textnormal{in}^\varepsilon - (\psi_\textnormal{in}^0 + \varepsilon \psi_{\textnormal{in}, 1})}_{\ell - 1, \delta} + \norm{\phi_\textnormal{in}^\varepsilon - (\phi_\textnormal{in}^0 + \varepsilon \phi_{\textnormal{in}, 1})}_{\ell, \delta} = o(\varepsilon) \qquad \text{as } \varepsilon \rightarrow 0.
    \end{equation*}
\end{assump}

Our final result is stated as follows.

\begin{theorem} \label{th:semicla_th}
    Under Assumption \ref{ass:semicla1}, there exists an $\varepsilon$-independent $C > 0$ such that for all $\varepsilon \in [0, 1]$,
    \begin{gather*}
        \tnorm{\zeta^\varepsilon - (\zeta^0 + \varepsilon \zeta_1)}_{\infty, T, \ell - 2, \delta} + \tnorm{\zeta^\varepsilon - (\zeta^0 + \varepsilon \zeta_1)}_{2, T, \ell - \frac{3}{2}, \delta} \leq C (r_{\ell - 2}^\varepsilon + \varepsilon^2), \\
        \tnorm{v^\varepsilon - (v^0 + \varepsilon v_1)}_{\infty, T, \ell - 1, \delta} + \tnorm{v^\varepsilon - (v^0 + \varepsilon v_1)}_{2, T, \ell - \frac{1}{2}, \delta} \leq C (r_{\ell - 2}^\varepsilon + \varepsilon^2).
    \end{gather*}
    Moreover, if Assumption \ref{ass:semicla2} is satisfied, then there also holds for all $\varepsilon \in [0, 1]$,
    \begin{gather*}
        \tnorm{\psi^\varepsilon - (\psi^0 + \varepsilon \psi_1)}_{\infty, T, \ell - 1, \delta} + \tnorm{\psi^\varepsilon - (\psi^0 + \varepsilon \psi_1)}_{2, T, \ell - \frac{1}{2}, \delta} \leq C (\tilde{r}_{\ell - 1}^\varepsilon + \varepsilon^2), \\
        \tnorm{\phi^\varepsilon - (\phi^0 + \varepsilon \phi_1)}_{\infty, T, \ell, \delta} + \tnorm{\phi^\varepsilon - (\phi^0 + \varepsilon \phi_1)}_{2, T, \ell + \frac{1}{2}, \delta} \leq C (\tilde{r}_{\ell - 1}^\varepsilon + \varepsilon^2).
    \end{gather*}
    In particular, for any compact subset $K \subset \mathbb{R}^d$ and $k \in \mathbb{N}$ and $T' < \frac{\delta_\textnormal{in}}{M}$ such that $T' \leq T$, there holds
    \begin{equation*}
        \norm{u^\varepsilon - e^{\frac{\psi^0}{2} + i \phi_1 + i \frac{\phi^0}{\varepsilon}}}_{L^\infty_{T'} \mathcal{C}^k (K)} = O \Bigl( \frac{r_1^\varepsilon}{\varepsilon} + \varepsilon \Bigr) \underset{\varepsilon \rightarrow 0}{\longrightarrow} 0
    \end{equation*}
    and, if $\psi^\varepsilon_\textnormal{in}$ is uniformly bounded by above,
    \begin{equation*}
        \norm{u^\varepsilon - e^{\frac{\psi^0}{2} + i \phi_1 + i \frac{\phi^0}{\varepsilon}}}_{L^\infty_{T'} \mathcal{C}^k_b (\mathbb{R}^d)} = O \Bigl( \frac{r_1^\varepsilon}{\varepsilon} + \varepsilon \Bigr) \underset{\varepsilon \rightarrow 0}{\longrightarrow} 0
    \end{equation*}
\end{theorem}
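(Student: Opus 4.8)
The plan is to control the remainders
\[
w^\varepsilon \coloneqq \zeta^\varepsilon - (\zeta^0 + \varepsilon \zeta_1), \qquad q^\varepsilon \coloneqq v^\varepsilon - (v^0 + \varepsilon v_1),
\]
whose initial data have size $r_{\ell-2}^\varepsilon$ by Assumption \ref{ass:semicla1}. Writing $\tilde v \coloneqq v^0 + \varepsilon v_1$ and $\tilde\zeta \coloneqq \zeta^0 + \varepsilon\zeta_1$ and inserting them into \eqref{sys:euler_modified_semicla}, I use that $(v^0,\zeta^0)$ solves the $\varepsilon=0$ system and $(v_1,\zeta_1)$ solves the linearized system \eqref{sys:euler_modified_semicla2}: by construction the contributions of order $\varepsilon^0$ and $\varepsilon^1$ cancel identically. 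Since $\tilde v$ is a gradient, $(\tilde v\cdot\nabla)\tilde v = \tfrac12\nabla\abs{\tilde v}^2$, so the only residual in the first equation is $\tfrac{\varepsilon^2}{2}\nabla\abs{v_1}^2$; expanding the quadratic and $i\varepsilon$-terms of the second equation leaves the residual $\varepsilon^2\bigl(\nabla(v_1\cdot\zeta_1) - \tfrac{i}{2}\nabla\divg\zeta_1 - 2i\,\nabla(\zeta^0\cdot\zeta_1)\bigr) - i\varepsilon^3\nabla(\zeta_1\cdot\zeta_1)$. Both are $O(\varepsilon^2)$ in the relevant norms by the a priori bounds on $(\zeta_1,v_1)$ recalled before Lemma \ref{lem:semicla_cond}.

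Subtracting produces a \emph{linear} system for $(q^\varepsilon,w^\varepsilon)$,
\begin{gather*}
    \partial_t q^\varepsilon + \tfrac12\nabla\bigl((v^\varepsilon+\tilde v)\cdot q^\varepsilon\bigr) + \lambda\,\Re w^\varepsilon = S_1^\varepsilon, \\
    \partial_t w^\varepsilon + \nabla(\tilde v\cdot w^\varepsilon) + \nabla(\zeta^\varepsilon\cdot q^\varepsilon) + \nabla\divg q^\varepsilon = i\tfrac{\varepsilon}{2}\bigl(\nabla\divg w^\varepsilon + 2\,\nabla((\zeta^\varepsilon+\tilde\zeta)\cdot w^\varepsilon)\bigr) + S_2^\varepsilon,
\end{gather*}
where the coefficients $v^\varepsilon,\tilde v,\zeta^\varepsilon,\tilde\zeta$ are bounded in $\mathcal{H}_\delta^{\ell+1}$, resp. $\mathcal{H}_\delta^\ell$, uniformly in $\varepsilon\in[0,1]$ by Theorem \ref{th:main_th}, and $S_1^\varepsilon=-\tfrac{\varepsilon^2}{2}\nabla\abs{v_1}^2$, $S_2^\varepsilon=O(\varepsilon^2)$ are (minus) the residuals above. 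The differences of the quadratic terms produce precisely the ``linearized'' coefficients $v^\varepsilon+\tilde v$, $\zeta^\varepsilon$ and $\zeta^\varepsilon+\tilde\zeta$, so this is exactly the linearization of \eqref{sys:euler_modified_semicla} along the solution, forced by an $O(\varepsilon^2)$ source.

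I would then run the same symmetrized energy method used for Theorem \ref{th:main_th} and Corollary \ref{cor:cauchy}, now on this linear system, measuring $q^\varepsilon$ at regularity $\ell-1$ and $w^\varepsilon$ at $\ell-2$ (admissible since $\ell-2>\tfrac{d-1}{2}$ under Assumption \ref{ass:semicla1}, so $\mathcal{H}_\delta^{\ell-2}$ remains an algebra against the coefficients). Differentiating the associated combination of $\norm{q^\varepsilon}_{\ell-1,\delta}^2$ and $\norm{w^\varepsilon}_{\ell-2,\delta}^2$ in time, the factor $\delta'(t)=-M$ yields a negative term gaining half a derivative that absorbs the losses from the transport terms and from the coupling $\nabla\divg q^\varepsilon$/$\lambda\Re w^\varepsilon$, whose top order cancels by the Riemann-variable symmetrization; the leading dispersive term has skew-Hermitian symbol $-i\tfrac{\varepsilon}{2}\,\xi\otimes\xi$ and contributes nothing to the real energy, while the variable-coefficient dispersive term carries an extra $\varepsilon$ and is handled by the half-derivative $L^2$-in-time gain. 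A Grönwall argument in $\mathcal{E}_{M,\ell,\delta}$ bounds the left-hand side by $\norm{(q^\varepsilon,w^\varepsilon)(0)}+\norm{(S_1^\varepsilon,S_2^\varepsilon)}=O(r_{\ell-2}^\varepsilon+\varepsilon^2)$, the first assertion. The $(\psi^\varepsilon,\phi^\varepsilon)$ estimates then follow by integrating these bounds through the reconstruction formulas \eqref{eq:rel_psi_zeta}, \eqref{eq:rel_v_phi_3} and their corrector analogues under Assumption \ref{ass:semicla2}, the passage from $\nabla\psi$ to $\psi$ gaining one level. For the wave function I factor $u^\varepsilon - e^{\frac{\psi^0}{2}+i\phi_1+i\frac{\phi^0}{\varepsilon}} = e^{\frac{\psi^0}{2}+i\frac{\phi^0}{\varepsilon}}(e^{Z^\varepsilon}-1)$ with $Z^\varepsilon = \tfrac12(\psi^\varepsilon-\psi^0) + i\,\varepsilon^{-1}(\phi^\varepsilon-\phi^0-\varepsilon\phi_1)$; the preceding bounds give $\psi^\varepsilon-\psi^0=O(\varepsilon)$ and $\phi^\varepsilon-\phi^0-\varepsilon\phi_1=O(\tilde r_{\ell-1}^\varepsilon+\varepsilon^2)$, so $Z^\varepsilon\to0$ in $\mathcal{C}^k(K)$ through $\mathcal{H}_\delta^s\hookrightarrow\mathcal{C}^k(K)$, and $\abs{e^{Z^\varepsilon}-1}\le\abs{Z^\varepsilon}e^{\abs{\Re Z^\varepsilon}}$ together with the boundedness of $e^{\psi^0/2}$ (on $K$, or globally when $\psi_\textnormal{in}^\varepsilon$ is bounded above) gives the stated rate, governed by the phase error $\tilde r_{\ell-1}^\varepsilon/\varepsilon$.

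The main difficulty is the energy estimate of the third paragraph: although the remainder system is linear, its transport and second-order coupling terms lose derivatives, and closing the estimate rests entirely on the analyticity-radius dissipation dominating every such loss — the very mechanism that makes the Cauchy theory work for $\lambda<0$, where no favourable sign is available. The remaining bookkeeping, namely checking that the coefficient norms at levels $\ell,\ell+1$ tame the remainder norms at $\ell-2,\ell-1$ and that the corrector was chosen so that the sources are genuinely $O(\varepsilon^2)$ rather than $O(\varepsilon)$, is where the hypothesis $\ell>\tfrac{d+3}{2}$ enters.
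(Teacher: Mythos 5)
Your proposal is correct and shares the paper's skeleton (subtract the corrector, derive a system linear in the remainder, close by the analytic-norm energy method of Lemma \ref{lem:toolbox} with $m=\ell-2$, then lift to $(\psi^\varepsilon,\phi^\varepsilon)$ and factor the wave function as $e^{\frac{\psi^0}{2}+i\phi_1+i\frac{\phi^0}{\varepsilon}}\bigl(e^{W^\varepsilon}-1\bigr)$), but it differs from the paper in how the $O(\varepsilon^2)$ sources are produced. The paper works with $\mathcal{Z}^\varepsilon=Z^\varepsilon-\varepsilon\zeta_1$, $\mathcal{V}^\varepsilon=V^\varepsilon-\varepsilon v_1$ and writes the remainder system with \emph{$\varepsilon$-independent} coefficients $(v^0,\zeta^0)$ on the left, pushing to the right source terms that are quadratic in the first-order differences, e.g.\ $\nabla(V^\varepsilon\cdot V^\varepsilon)$, $\nabla(V^\varepsilon\cdot Z^\varepsilon)$, together with $i\frac{\varepsilon}{2}\bigl(\nabla\divg Z^\varepsilon+2\nabla(Z^\varepsilon\cdot\zeta^0)+2\nabla(Z^\varepsilon\cdot\zeta^\varepsilon)\bigr)$; these are shown to be $O(\varepsilon^2)$ by \emph{reusing} the second part of Theorem \ref{th:main_th} (case $\ell>\frac{d+1}{2}$, which Assumption \ref{ass:semicla1} guarantees, giving $V^\varepsilon,Z^\varepsilon=O(\varepsilon)$ since $D^\varepsilon_{\ell-1}\lesssim r^\varepsilon_{\ell-2}+C\varepsilon$). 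You instead compute the consistency residual of the approximate solution $(\tilde v,\tilde\zeta)=(v^0+\varepsilon v_1,\zeta^0+\varepsilon\zeta_1)$ explicitly — your expansion and cancellation of the $\varepsilon^0$ and $\varepsilon^1$ orders is correct, as are the residuals $\frac{\varepsilon^2}{2}\nabla\abs{v_1}^2$ and $\varepsilon^2\bigl(\nabla(v_1\cdot\zeta_1)-\frac{i}{2}\nabla\divg\zeta_1-2i\nabla(\zeta^0\cdot\zeta_1)\bigr)-i\varepsilon^3\nabla(\zeta_1\cdot\zeta_1)$ — so your sources depend only on the corrector profiles and are $O(\varepsilon^2)$ directly from the corrector's a priori bounds (note $\zeta_1\in L^2_T\mathcal{H}^{\ell-\frac12}_\delta$ is exactly what makes $\varepsilon^2\nabla\divg\zeta_1$ admissible at level $\ell-\frac52$), at the price of $\varepsilon$-dependent coefficients $v^\varepsilon+\tilde v$, $\zeta^\varepsilon$, $\zeta^\varepsilon+\tilde\zeta$ in the linear system, which are uniformly bounded by Lemma \ref{lem:est_scheme_2}. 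Your variant is thus slightly more self-contained for the $(\zeta,v)$ estimates, since it does not invoke the prior first-order convergence rate, whereas the paper's bookkeeping is lighter because the quadratic structure does the work. Three harmless slips: the sign of $S_1^\varepsilon$ is immaterial; $\tilde\zeta$ is bounded in $\mathcal{H}^{\ell-1}_\delta$ (not $\mathcal{H}^\ell_\delta$, since $\zeta_1$ only reaches level $m\leq\ell-1$), which still suffices because you estimate $w^\varepsilon$ at level $\ell-2$; and in the factorization the prefactor should be $e^{\frac{\psi^0}{2}+i\phi_1+i\frac{\phi^0}{\varepsilon}}$, though the omitted unimodular factor $e^{i\phi_1}$ does not affect the bounds. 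Also, for the $(\psi^\varepsilon,\phi^\varepsilon)$ step the paper does not merely integrate the reconstruction formulas: it derives transport-type equations for $\mathcal{P}^\varepsilon=P^\varepsilon-\varepsilon\psi_1$ and $\mathcal{Q}^\varepsilon=Q^\varepsilon-\varepsilon\phi_1$ and runs the same energy functional on them, concluding with \eqref{eq:anal_deriv} and $\norm{\mathcal{P}^\varepsilon(0)}_{\ell-2,\delta}^2+\norm{\mathcal{Q}^\varepsilon(0)}_{\ell-1,\delta}^2+(r^\varepsilon_{\ell-2})^2\leq 2(\tilde r^\varepsilon_{\ell-1})^2$ — your sketch is compatible with this but should be fleshed out in that form to actually obtain the stated $L^\infty_T$ and $L^2_T$ norms in shrinking $\delta(t)$.
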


\subsection{Outline}

In Section \ref{sec:Cauchy}, we first state a toolbox lemma for the computations in analytic spaces, and then address the Cauchy theory in Theorem \ref{th:main_th} in two steps. First, we prove the existence part thanks to a scheme defined in Section \ref{subsec:set_scheme}. Then, we show the uniqueness of this solution through similar estimates as in the existence part. Section \ref{sec:semicla} is devoted to the semiclassical limit, with the proof of the second part of Theorem \ref{th:main_th}. We prove the results about $\psi^\varepsilon$ and $v^\varepsilon$, \textit{i.e.} the second and third parts of Corollary \ref{cor:cauchy}, in Section \ref{sec:cor_proof}. Section \ref{sec:semicla_lim} is devoted to the semiclassical limit of the wave function: we address there Lemma \ref{lem:semicla_cond} and Theorem \ref{th:semicla_th}. Last, we discuss in Section \ref{sec:ass_in_data} the assumptions on the initial data, and in particular the differences from the direct assumption $(\psi^\varepsilon_\textnormal{in}, \nabla \phi^\varepsilon_\textnormal{in}) \in (\mathcal{H}_{\delta_\textnormal{in}}^{\ell + 1})^2$ for instance.

\section*{Acknowledgments}

The author wishes to thank Rémi Carles and Matthieu Hillairet for enlightening and constructive discussions about this work and the writing of this paper.

\section{Cauchy problem} \label{sec:Cauchy}

In this section, we prove Theorem \ref{th:main_th}. Our proof is based on an iterative scheme in a similar way as in \cite{carles-gallo} for example even though it is a little different.
%We have shown in section \ref{subsec:sys_v_zeta} that the system \eqref{sys:euler_riemann_semicla} is equivalent to the system \eqref{sys:euler_modified_semicla} with the relation \eqref{eq:rel_psi_zeta}. Therefore, we will prove existence and uniqueness for the latter.

\subsection{Analytic functions} %carles-gallo

We recall that the analytic spaces have been defined in Section \ref{subsubsec:analytic}. We first recall some properties of these spaces (see \cite{Ginibre_Velo__Gevrey}).

\begin{lem} \label{lem:anal_prop}
    Let $\ell, \delta > 0$.
    \begin{enumerate}
    \item For any $\alpha \in \mathbb{N}^d$ and $f \in \mathcal{H}_{\delta}^{\ell + \abs{\alpha}}$,
        \begin{equation*}
            \norm{\partial^\alpha_x f}_{\ell, \delta} \leq \norm{f}_{\ell + \abs{\alpha}, \delta}.
        \end{equation*}
    More precisely, we have:
    \begin{equation} \label{eq:anal_deriv}
        \norm{f}_{\ell+1, \delta}^2 = \norm{f}_{\ell, \delta}^2 + \sum_{\abs{\alpha} = 1} \norm{\partial_x^\alpha f}_{\ell, \delta}^2.
    \end{equation}
    
    \item For any $s \in \mathbb{R}$, $f \in \mathcal{H}_{\delta}^{\ell+s}$ and $g \in \mathcal{H}_{\delta}^{\ell-s}$,
        \begin{equation*}
            \langle f, g \rangle_{\mathcal{H}_{\delta}^\ell} \leq \norm{f}_{\ell+s, \delta} \norm{g}_{\ell-s, \delta}.
        \end{equation*}

    \item \label{part3} For any $m > \frac{d}{2}$, there exists $K^{\ell, m} > 0$ (if $\ell = m$, we will simply denote it by $K^\ell$) which does not depend on $\delta > 0$ such that for any $f,g \in \mathcal{H}_\delta^{\max (m, \ell)}$,
        \begin{equation*}
            \norm{f \cdot g}_{\ell, \delta} \leq \frac{1}{2} K^{\ell, m} \Bigl( \norm{f}_{m, \delta} \norm{g}_{\ell, \delta} + \norm{f}_{\ell, \delta} \norm{g}_{m, \delta} \Bigr).
        \end{equation*}

    \item For any $f \in \mathcal{H}_{\delta}^{\ell}$, if $f$ is scalar, then
        \begin{equation*}
            \Re \langle f, i \Delta f \rangle_{\ell, \delta} = 0;
        \end{equation*}
        if $f$ is $\mathbb{R}^d$-valued, then
        \begin{equation*}
            \Re \langle f, i \nabla \divg f \rangle_{\ell, \delta} = 0;
        \end{equation*}
    
    \item If $\ell > \frac{d}{2}$, we have a constant $C > 0$ such that for all $\delta > 0$ and all $f \in \mathcal{H}_\delta^\ell$,
        \begin{equation*}
            \norm{f}_{L^\infty} \leq C \norm{f}_{H^\ell} \leq C \norm{f}_{\ell,\delta}.
        \end{equation*}
    \end{enumerate}
\end{lem}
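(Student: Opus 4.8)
The plan is to reduce every assertion to a computation on the Fourier side, where the weight $\langle \xi \rangle^{2\ell} e^{2\delta \langle \xi \rangle}$ is simply a multiplier and Plancherel's theorem applies.

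Parts (1), (2) and (5) are then immediate. For (1), I would use $\widehat{\partial^\alpha_x f}(\xi) = (i\xi)^\alpha \hat f(\xi)$, so that $\abs{\xi^\alpha}^2 \le \abs{\xi}^{2\abs\alpha} \le \langle\xi\rangle^{2\abs\alpha}$; inserting this into the definition of $\norm{\cdot}_{\ell,\delta}$ yields the inequality, and the identity \eqref{eq:anal_deriv} follows from $\langle\xi\rangle^{2(\ell+1)} = \langle\xi\rangle^{2\ell}(1+\abs\xi^2)$ together with $\sum_{\abs\alpha=1}\abs{\xi^\alpha}^2 = \abs\xi^2$. For (2), I would factor the weight as $\langle\xi\rangle^{2\ell} e^{2\delta\langle\xi\rangle} = \bigl(\langle\xi\rangle^{\ell+s} e^{\delta\langle\xi\rangle}\bigr)\bigl(\langle\xi\rangle^{\ell-s} e^{\delta\langle\xi\rangle}\bigr)$ and apply Cauchy--Schwarz. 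For (5), the embedding $\norm{f}_{L^\infty} \le C\norm{f}_{H^\ell}$ for $\ell > d/2$ comes from $\abs{f(x)} \le (2\pi)^{-d}\int \abs{\hat f(\xi)}\diff\xi \le C\norm{\langle\cdot\rangle^{-\ell}}_{L^2}\norm{f}_{H^\ell}$ via Cauchy--Schwarz, the first factor being finite precisely because $2\ell > d$; the second inequality $\norm{f}_{H^\ell}\le\norm{f}_{\ell,\delta}$ holds since $e^{2\delta\langle\xi\rangle}\ge 1$.

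For (4), the point is antisymmetry. On the Fourier side one has $\widehat{i\Delta f}(\xi) = -i\abs\xi^2\hat f(\xi)$ in the scalar case and $\widehat{i\nabla\divg f}(\xi) = -i\,\xi\,(\xi\cdot\hat f(\xi))$ in the vector case, so a direct computation gives $\langle f, i\Delta f\rangle_{\ell,\delta} = i\int\langle\xi\rangle^{2\ell}e^{2\delta\langle\xi\rangle}\abs\xi^2\abs{\hat f}^2\diff\xi$ and $\langle f, i\nabla\divg f\rangle_{\ell,\delta} = i\int\langle\xi\rangle^{2\ell}e^{2\delta\langle\xi\rangle}\abs{\xi\cdot\hat f}^2\diff\xi$. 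Both integrals are real, so the pairings are purely imaginary and their real parts vanish.

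The one substantial step is the tame product estimate (3); this is where I expect the main difficulty, precisely because of the exponential weight. The crucial observation is the subadditivity $\langle\xi\rangle \le \langle\xi-\eta\rangle + \langle\eta\rangle$ (view $\langle\xi\rangle = \abs{(1,\xi)}$ in $\mathbb{R}^{d+1}$ and use the triangle inequality), which yields the factorization $e^{\delta\langle\xi\rangle}\le e^{\delta\langle\xi-\eta\rangle}e^{\delta\langle\eta\rangle}$. I would then introduce $\tilde f \coloneqq \mathcal{F}^{-1}\bigl(e^{\delta\langle\cdot\rangle}\abs{\hat f}\bigr)$ and likewise $\tilde g$, which satisfy $\norm{\tilde f}_{H^k} = \norm{f}_{k,\delta}$ for every $k$. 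Since $\widehat{fg}$ is a convolution of $\hat f$ and $\hat g$, the factorization gives the pointwise bound $e^{\delta\langle\xi\rangle}\abs{\widehat{fg}(\xi)} \le \widehat{\tilde f\,\tilde g}(\xi)$, whence $\norm{fg}_{\ell,\delta}\le\norm{\tilde f\,\tilde g}_{H^\ell}$ (applied componentwise and summed in the vector case). It then remains to invoke the classical Sobolev tame estimate $\norm{\tilde f\,\tilde g}_{H^\ell}\le\frac12 K^{\ell,m}\bigl(\norm{\tilde f}_{H^m}\norm{\tilde g}_{H^\ell}+\norm{\tilde f}_{H^\ell}\norm{\tilde g}_{H^m}\bigr)$ for $m > d/2$ and to rewrite the right-hand side in terms of $f$ and $g$. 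The feature to emphasize is that this reduction introduces no $\delta$-dependent constant: the exponential weight is entirely absorbed by the subadditivity, so $K^{\ell,m}$ is exactly the (dimension- and exponent-dependent, $\delta$-independent) Sobolev constant, as claimed.
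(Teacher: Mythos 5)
Your proof is correct, and it coincides with the standard argument: the paper itself states this lemma without proof, simply citing Ginibre--Velo, and the proof recorded there (and used in the analytic-scheme literature the paper builds on, e.g.\ Carles--Gallo) is precisely your Fourier-side reduction --- Plancherel plus weight manipulations for (1), (2), (4), (5), and for the tame estimate (3) the subadditivity $\langle \xi \rangle \leq \langle \xi - \eta \rangle + \langle \eta \rangle$ combined with the auxiliary function $\tilde{f} = \mathcal{F}^{-1} \bigl( e^{\delta \langle \cdot \rangle} \abs{\hat{f}} \bigr)$, which absorbs the exponential weight and reduces everything to the classical Sobolev tame estimate with a $\delta$-independent constant. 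Nothing is missing; in particular you correctly handle the purely imaginary pairings in (4) and note the componentwise treatment of the dot product in (3).
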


Moreover, as already said in Section \ref{subsubsec:analytic}, we take time-depending $\delta (t)$ for the analytic regularity $\mathcal{H}_\delta^\ell$. $\norm{f}_{\ell,\delta}^2$ for time-depending $f$ and $\delta$ can be estimated thanks to the following result.

\begin{lem} \label{lem:prop_anal_norm_diff}
    For a $\mathcal{C}^1$ time-dependent $\delta$, we have:
    \begin{equation*}
        \frac{\diff}{\diff t} \norm{f}_{\ell,\delta}^2 = 2 \dot{\delta} \norm{f}_{\ell + \frac{1}{2}, \delta}^2 + 2 \Re \langle f, \partial_t f \rangle_{\ell, \delta}.
    \end{equation*}
\end{lem}

The following lemma, based on the previous properties, is a toolbox for all the forthcoming analysis and estimates. For a partial proof, we refer to \cite{carles-gallo}, most cases not treated in there can be treated in a similar way thanks to Lemma \ref{lem:anal_prop}.

\begin{lem} \label{lem:toolbox}
    Let $m > \frac{d}{2} - 1$, $M > 0$, $\delta (t) = \delta_\textnormal{in} - M t$ and $T \leq \frac{\delta_\textnormal{in}}{M}$. Let $(f, g) \in \mathcal{C} ([0, T], \mathcal{H}_\delta^{m+\frac{1}{2}} \times \mathcal{H}_\delta^{m-\frac{1}{2}} )$, and denote $\operatorname{D} \coloneqq \Delta$ if they are $\mathbb{C}$-valued or $\operatorname{D} \coloneqq \nabla \divg$ if they are $\mathbb{C}^d$-valued. Let $(F, G) \in L^2 ((0, T), \mathcal{H}_\delta^{m+\frac{1}{2}} \times \mathcal{H}_\delta^{m-\frac{1}{2}} )$, $\tilde{g}_1 \in L^2_T \mathcal{H}_\delta^{m + 1}$, $\tilde{g}_2 \in L^\infty_T \mathcal{H}_\delta^{m + 1}$ and $\theta_1, \theta_2, \theta_3 \in \mathbb{R}$, and assume that $g \in L^2_T \mathcal{H}^{m+1}_\delta$ if $\theta_2 \neq 0$ and
    \begin{equation*}
        \partial_t f = F, \qquad \qquad f(0) \in \mathcal{H}_{\delta_\textnormal{in}}^{m+1},
    \end{equation*}
    \begin{equation*}
        \partial_t g = G + i \theta_1 \operatorname{D} g + i \theta_2 \operatorname{D} \tilde{g}_1 + i \theta_3 \nabla (\tilde{g}_2 \cdot \tilde{g}_2), \qquad g(0) \in \mathcal{H}_{\delta_\textnormal{in}}^{m}.
    \end{equation*}
    Then, $t \mapsto \norm{f (t)}_{m+1, \delta (t)}^2$ and $t \mapsto \norm{g (t)}_{m, \delta (t)}^2$ are continuous and for all $t \in [0, T]$,
    \begin{equation*}
        \mathcal{E}_{M, m + 1, \delta} (f) (t) \leq \norm{f(0)}_{m+1, \delta}^2 + 2 \tnorm{f}_{2, t, {m+\frac{3}{2}}, \delta} \tnorm{F}_{2, t, {m+\frac{1}{2}}, \delta},
    \end{equation*}
    \begin{multline*}
        \mathcal{E}_{M, m, \delta} (g) (t) \leq \norm{g(0)}_{m, \delta}^2 + 2 \tnorm{g}_{2, t, {m+\frac{1}{2}}, \delta} \tnorm{G}_{2, t, {m-\frac{1}{2}}, \delta} \\ + 2 \abs{\theta_2} \tnorm{g}_{2, t, {m+1}, \delta} \tnorm{\tilde{g}_1}_{2, t, {m+1}, \delta} + 2 T \abs{\theta_3} \tnorm{g}_{\infty, t, m, \delta} \tnorm{\tilde{g}_2}_{\infty, t, {m+1}, \delta}^2.
    \end{multline*}
    In the case $\theta_2 = 0$, the term $2 \abs{\theta_2} \norm{g}_{2, {m+1}, \delta} \norm{\tilde{g}_1}_{2, {m+1}, \delta}$ should be understood to be zero in any case.
    Moreover, there holds for all $t \in [0, T]$
    \begin{itemize}
    \item If $F = F_1 \cdot F_2$ with $F_1 \in L^\infty_T \mathcal{H}_\delta^{m+\frac{1}{2}}$ and $F_2 \in L^2_T \mathcal{H}_\delta^{m+\frac{3}{2}}$ and $m > \frac{d-1}{2}$, then
    \begin{equation} \label{eq:tool0}
        \tnorm{F}_{2, t, {m+\frac{1}{2}}, \delta} \leq K^{m+\frac{1}{2}} \tnorm{F_1}_{\infty, t, m+\frac{1}{2}, \delta} \tnorm{F_2}_{2, t, {m+\frac{3}{2}}, \delta},
    \end{equation}
    where $K^{m+\frac{1}{2}}$ is defined in Lemma \ref{lem:anal_prop} (part \ref{part3}).

    \item If $F = (F_1 \cdot \nabla) F_2$ with $F_1 \in L^\infty_T \mathcal{H}_\delta^{m+\frac{1}{2}}$ and $F_2 \in L^2_T \mathcal{H}_\delta^{m+\frac{3}{2}}$ and $m > \frac{d-1}{2}$, then
    \begin{equation} \label{eq:tool1}
        \tnorm{F}_{2, t, {m+\frac{1}{2}}, \delta} \leq K^{m+\frac{1}{2}} \tnorm{F_1}_{\infty, t, m+\frac{1}{2}, \delta} \tnorm{F_2}_{2, t, {m+\frac{3}{2}}, \delta}.
    \end{equation}

    \item If $F = (F_1 \cdot \nabla) F_2$ with $F_1 \in L^2_T \mathcal{H}_\delta^{m+\frac{1}{2}}$ and $F_2 \in L^\infty_T \mathcal{H}_\delta^{m+\frac{3}{2}}$ and $m > \frac{d-1}{2}$, then
    \begin{equation}
        \tnorm{F}_{2, t, {m+\frac{1}{2}}, \delta} \leq K^{m+\frac{1}{2}} \tnorm{F_1}_{2, t, m+\frac{1}{2}, \delta} \tnorm{F_2}_{\infty, t, {m+\frac{3}{2}}, \delta}.
    \end{equation}

    \item If $F = \theta_4 \Re F_1$ with $F_1 \in L^2_T \mathcal{H}_\delta^{m + \frac{1}{2}}$ and $\theta_4 \in \mathbb{R}$, then
    \begin{equation} \label{eq:tool3}
        \tnorm{F}_{2, t, {m+\frac{1}{2}}, \delta} \leq 2 \abs{\theta_4} \tnorm{F_1}_{\infty, t, m+\frac{1}{2}, \delta}.
    \end{equation}

    \item If $F = \nabla (F_1 \cdot F_2)$ with $F_1 \in L^\infty_T \mathcal{H}_\delta^{m+\frac{3}{2}}$ and $F_2 \in L^2_T \mathcal{H}_\delta^{m+\frac{3}{2}}$, then
    \begin{equation} \label{eq:tool4}
        \tnorm{F}_{2, t, {m+\frac{1}{2}}, \delta} \leq K^{m + \frac{3}{2}} \tnorm{F_1}_{\infty, t, m+\frac{3}{2}, \delta} \tnorm{F_2}_{2, t, {m+\frac{3}{2}}, \delta}.
    \end{equation}

    \item If $G = \nabla (G_1 \cdot G_2)$ with $G_1 \in L^\infty_T \mathcal{H}_\delta^{m+\frac{1}{2}}$ and $G_2 \in L^2_T \mathcal{H}_\delta^{m+\frac{1}{2}}$ and $m > \frac{d-1}{2}$, then
    \begin{equation} \label{eq:tool5}
        \tnorm{G}_{2, t, {m-\frac{1}{2}}, \delta} \leq K^{m + \frac{1}{2}} \tnorm{G_1}_{\infty, m+\frac{1}{2}, \delta} \tnorm{G_2}_{2, t, {m+\frac{1}{2}}, \delta}.
    \end{equation}

    \item If $G = \theta_5 \operatorname{D} G_1$ with $G_1 \in L^2_T \mathcal{H}_\delta^{m+\frac{3}{2}}$ and $\theta_5 \in \mathbb{C}$, then
    \begin{equation} \label{eq:tool6}
        \tnorm{G}_{2, t, {m-\frac{1}{2}}, \delta} \leq \abs{\theta_5} \tnorm{G_1}_{2, t, {m+\frac{3}{2}}, \delta}
    \end{equation}

    \item If $G = \theta_6 \nabla (G_1 \cdot G_1)$ with $G_1 \in L^\infty_T \mathcal{H}_\delta^{m} \cap L^2_T \mathcal{H}_\delta^{m+\frac{1}{2}}$ and $\theta_6 \in \mathbb{C}$ and $m > \frac{d}{2}$, then
    \begin{equation} \label{eq:tool7}
        \tnorm{G}_{2, t, {m-\frac{1}{2}}, \delta} \leq \abs{\theta_6} K^{m + \frac{1}{2}, m} \tnorm{G_1}_{\infty, t, m, \delta} \tnorm{G_1}_{2, t, {m+\frac{1}{2}}, \delta}.
    \end{equation}

    \item If $G = G_1 \cdot G_2$ with $G_1 \in L^\infty_T \mathcal{H}_\delta^{m+1}$ and $G_2 \in L^2_T \mathcal{H}_\delta^{m+1}$, then
    \begin{equation} \label{eq:tool8}
        \tnorm{G}_{2, t, {m-\frac{1}{2}}, \delta} \leq K^{m + 1} \tnorm{G_1}_{\infty, t, m+1, \delta} \tnorm{G_2}_{2, t, {m+1}, \delta}.
    \end{equation}
    or, if $m > \frac{d-1}{2}$,
    \begin{equation} \label{eq:tool9}
        \tnorm{G}_{2, t, {m-\frac{1}{2}}, \delta} \leq K^{m + \frac{1}{2}} \tnorm{G_1}_{\infty, t, m+\frac{1}{2}, \delta} \tnorm{G_2}_{2, t, {m+\frac{1}{2}}, \delta}.
    \end{equation}

    \item If $G = \theta_7 \divg G_1$ with $G_1 \in L^2_T \mathcal{H}_\delta^{m+\frac{1}{2}}$ and $\theta_7 \in \mathbb{C}$, then
    \begin{equation} \label{eq:tool10}
        \tnorm{G}_{2, t, {m-\frac{1}{2}}, \delta} \leq \abs{\theta_7} \tnorm{G_1}_{2, t, {m+\frac{1}{2}}, \delta}
    \end{equation}
    \end{itemize}
    %
    % If $F$ and/or $G$ are sums of several terms of the previous form, the estimates can also be summed.
\end{lem}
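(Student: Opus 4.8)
The plan is to split the statement into two independent mechanisms: the two energy inequalities (for $f$ and for $g$) come from the time-differentiation identity of Lemma~\ref{lem:prop_anal_norm_diff}, whereas the list of product estimates is a purely spatial consequence of the first three properties of Lemma~\ref{lem:anal_prop} combined with the Cauchy--Schwarz inequality in the time variable. I would start with the inequality for $f$, applying Lemma~\ref{lem:prop_anal_norm_diff} at regularity $\ell=m+1$. Since $\delta(t)=\delta_\textnormal{in}-Mt$ satisfies $\dot\delta=-M$ and $\partial_t f=F$, this gives
\[
    \frac{\diff}{\diff t}\norm{f(t)}_{m+1,\delta}^2 = -2M\norm{f}_{m+\frac32,\delta}^2 + 2\Re\langle f,F\rangle_{m+1,\delta}.
\]
The first term carries the crucial good sign: after integration it yields the smoothing quantity $2M\tnorm{f}_{2,t,m+\frac32,\delta}^2$, which in particular dominates the $2M\tnorm{f}_{2,t,m+1,\delta}^2$ entering $\mathcal{E}_{M,m+1,\delta}(f)$. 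For the second term, part~2 of Lemma~\ref{lem:anal_prop} with $s=\tfrac12$ gives $\abs{\langle f,F\rangle_{m+1,\delta}}\le\norm{f}_{m+\frac32,\delta}\norm{F}_{m+\frac12,\delta}$, and integrating on $[0,t]$ with Cauchy--Schwarz in time produces the stated bound.

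The inequality for $g$ follows the same template at regularity $\ell=m$, the difference being the three extra terms $i\theta_1\operatorname{D}g$, $i\theta_2\operatorname{D}\tilde g_1$ and $i\theta_3\nabla(\tilde g_2\cdot\tilde g_2)$ carried by $\partial_t g$. The step I expect to be decisive is that the self-interaction term vanishes: by part~4 of Lemma~\ref{lem:anal_prop}, $i\Delta$ and $i\nabla\divg$ are skew-adjoint for the real, positive weighted inner product, so $\Re\langle g,i\theta_1\operatorname{D}g\rangle_{m,\delta}=0$. Without this cancellation the second-order operator $\operatorname{D}$ would cost two derivatives against $g$ at top regularity, which no amount of analytic smoothing could absorb; with it, the dispersive regime $\theta_1\ne0$ is no worse than the transport regime. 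The remaining terms are treated by part~2 of Lemma~\ref{lem:anal_prop}, the derivative counts of $\operatorname{D}$ and $\nabla$ being compensated by part~1: one gets $\abs{\langle g,i\operatorname{D}\tilde g_1\rangle_{m,\delta}}\le\norm{g}_{m+1,\delta}\norm{\tilde g_1}_{m+1,\delta}$ and $\abs{\langle g,i\nabla(\tilde g_2\cdot\tilde g_2)\rangle_{m,\delta}}\le\norm{g}_{m,\delta}\norm{\tilde g_2\cdot\tilde g_2}_{m+1,\delta}$, the last factor being controlled by the algebra estimate (part~\ref{part3}, with algebra index $m+1>\tfrac d2$) by $\lesssim\norm{\tilde g_2}_{m+1,\delta}^2$. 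Integrating in time — Cauchy--Schwarz for the $G$ and $\theta_2$ contributions, and the crude $\int_0^t\le T\sup_{[0,t]}$ for the quadratic $\theta_3$ contribution — gives the four summands on the right-hand side.

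For the bulleted product estimates I would follow one uniform three-move recipe, with no time differentiation involved. First, part~1 of Lemma~\ref{lem:anal_prop} pushes every outer derivative into a gain of regularity, e.g. $\norm{\operatorname{D}G_1}_{m-\frac12,\delta}\le\norm{G_1}_{m+\frac32,\delta}$ in \eqref{eq:tool6} and $\norm{\divg G_1}_{m-\frac12,\delta}\le\norm{G_1}_{m+\frac12,\delta}$ in \eqref{eq:tool10}. Second, the tame product estimate (part~\ref{part3}) bounds every remaining product $F_1\cdot F_2$, $(F_1\cdot\nabla)F_2$ or $\nabla(F_1\cdot F_2)$, the algebra index being taken to be whichever of $m$, $m+\tfrac12$, $m+1$ exceeds $\tfrac d2$ under the hypothesis in force ($m>\tfrac d2$, $m>\tfrac{d-1}2$, or $m>\tfrac d2-1$ respectively); one then places the $L^\infty_t$ factor at this low index and the $L^2_t$ factor at the top index. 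Third, taking the $L^2$ (or $L^\infty$) norm in time turns the pointwise product of $\norm{\cdot}$'s into the advertised product of $\tnorm{\cdot}$'s. The only item outside this scheme is \eqref{eq:tool3}: writing $\Re F_1=\tfrac12(F_1+\overline{F_1})$ and using $\norm{\overline{F_1}}_{\ell,\delta}=\norm{F_1}_{\ell,\delta}$ (a change of variable $\xi\mapsto-\xi$ in the Fourier integral) gives $\norm{\Re F_1}_{\ell,\delta}\le\norm{F_1}_{\ell,\delta}$, after which the time norm is immediate.

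The genuine obstacle is not any one of these pairings but the rigorous application of Lemma~\ref{lem:prop_anal_norm_diff}. Under the stated hypotheses one only knows $f\in\mathcal{C}_T\mathcal{H}^{m+\frac12}_\delta$ and $g\in\mathcal{C}_T\mathcal{H}^{m-\frac12}_\delta$, so a priori $f(t)$ and $g(t)$ need not even lie in $\mathcal{H}^{m+1}_\delta$ and $\mathcal{H}^m_\delta$ for $t>0$ — precisely the regularity whose finiteness (and continuity in $t$) the lemma asserts. The clean way around this is to run the computation on the Fourier truncations $f_N=\mathcal{F}^{-1}(\mathds{1}_{\abs{\xi}\le N}\hat f)$ and $g_N$, for which all norms are finite and the differentiation identity is unconditional; every estimate above is uniform in $N$, so Fatou's lemma (letting $N\to\infty$) simultaneously yields the finiteness of the top-order norms, the claimed energy bounds, and the continuity of $t\mapsto\norm{f(t)}_{m+1,\delta(t)}^2$ and $t\mapsto\norm{g(t)}_{m,\delta(t)}^2$. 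Once this approximation is set up, the whole lemma reduces to the one-line pairings described above.
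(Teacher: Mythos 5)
Your proposal is correct and follows essentially the route the paper intends: the paper offers no proof of this lemma, deferring to \cite{carles-gallo} and remarking that the remaining cases follow from Lemma \ref{lem:anal_prop}, and your combination of Lemma \ref{lem:prop_anal_norm_diff} (with $\dot\delta=-M$ producing the smoothing term), the skew-adjointness cancellation of part~4 for the $i\theta_1\operatorname{D}g$ term, the duality/derivative/algebra estimates of parts~1--3, and Cauchy--Schwarz in time is precisely that argument, with your Fourier-truncation step supplying the rigor the paper leaves implicit. The two places where your derivation disagrees with the statement --- the algebra constant $K^{m+1}$ in the $\theta_3$ term, and the $L^2$-in-time rather than $L^\infty$-in-time norm in \eqref{eq:tool3} --- are typos in the paper's statement rather than gaps in your proof, as the paper's own subsequent applications (which use $\tnorm{F_1}_{2,t,m+\frac{1}{2},\delta}$) confirm.
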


\subsection{Setting of the scheme} \label{subsec:set_scheme}

Let $\varepsilon \in [0, 1]$.
Set $\zeta^\varepsilon_0 (t) \coloneqq \nabla \psi_\textnormal{in}^\varepsilon$ and $v^\varepsilon_0 (t) \coloneqq \nabla \phi_\textnormal{in}^\varepsilon$ for all $t \geq 0$.
Then, for all $k \in \mathbb{N}$, define $\zeta^\varepsilon_{k+1}$ and $v^\varepsilon_{k+1}$ by induction as the solution to
\begin{System} \label{sys:scheme_zeta_v}
    \partial_t v^\varepsilon_{k+1} + (v^\varepsilon_k \cdot \nabla) v^\varepsilon_{k+1} + \lambda \, \Re \zeta^\varepsilon_k = 0,\qquad \qquad v^\varepsilon_{k+1} (0) = \nabla \phi_\textnormal{in}^\varepsilon, \\
    \partial_t \zeta^\varepsilon_{k+1} + \nabla \Bigl( v^\varepsilon_k \cdot \zeta^\varepsilon_k \Bigr) + \nabla \divg v^\varepsilon_{k+1} = i \frac{\varepsilon}{2} \Bigl( \nabla \divg \zeta^\varepsilon_{k+1} + 2 \nabla ( \zeta^\varepsilon_k \cdot \zeta^\varepsilon_k ) \Bigr), \qquad \qquad  \zeta^\varepsilon_{k+1} (0) = \nabla \psi_\textnormal{in}^\varepsilon.
\end{System}
The first equation is an explicit transport equation with source term and does not depend on $\zeta^\varepsilon_{k+1}$ so that $v^\varepsilon_{k+1}$ can be defined first independently. For our case, we will show that those terms are smooth (and even analytic).
Then, the second equation can be solved thanks to the Schrödinger semigroup:
\begin{equation} \label{eq:def_zeta_k_scheme}
    \zeta^\varepsilon_{k+1} (t) = \nabla e^{i \frac{\varepsilon}{2} t \Delta} \psi_\textnormal{in}^\varepsilon - \nabla \int_0^t e^{i \frac{\varepsilon}{2} (t - \tau) \Delta} \Bigl( v^\varepsilon_k (\tau) \cdot \zeta^\varepsilon_k (\tau) + \frac{1}{2} \divg v^\varepsilon_{k+1} (\tau) - i \frac{\varepsilon}{2} \zeta^\varepsilon_k (\tau) \cdot \zeta^\varepsilon_k (\tau) \Bigr) \diff \tau.
\end{equation}
It is easy to see that $\zeta^\varepsilon_{k+1}$ defined by \eqref{eq:def_zeta_k_scheme} satisfies \eqref{sys:scheme_zeta_v}. Indeed, define
\begin{equation*}
    \psi^\varepsilon_{k+1} (t) = e^{i \frac{\varepsilon}{2} t \Delta} \psi_\textnormal{in}^\varepsilon - \int_0^t e^{i \frac{\varepsilon}{2} (t - \tau) \Delta} \Bigl( v^\varepsilon_k (\tau) \cdot \zeta^\varepsilon_k (\tau) + \frac{1}{2} \divg v^\varepsilon_{k+1} (\tau) - i \frac{\varepsilon}{2} \zeta^\varepsilon_k (\tau) \cdot \zeta^\varepsilon_k (\tau) \Bigr) \diff \tau,
\end{equation*}
then it easy to check that $\zeta^\varepsilon_{k+1} = \nabla \psi^\varepsilon_{k+1}$ and
\begin{equation*}
    \partial_t \psi^\varepsilon_{k+1} - i \frac{\varepsilon}{2} \Delta \psi^\varepsilon_{k+1} = - \Bigl( v^\varepsilon_k \cdot \zeta^\varepsilon_k + \frac{1}{2} \divg v^\varepsilon_{k+1} - i \frac{\varepsilon}{2} \zeta^\varepsilon_k \cdot \zeta^\varepsilon_k \Bigr).
\end{equation*}

\subsection{Well-posedness of the scheme} \label{subsec:wellpos_scheme}

%Take now $\ell > \frac{d}{2}$ and $\delta_\textnormal{in} > 0$.
Fix now $\psi_\textnormal{in}^\varepsilon, \phi_\textnormal{in}^\varepsilon$ satisfying Assumption \ref{ass:bound}.
With this assumption, our scheme is well-posed (at least locally in time).

\begin{lem} \label{lem:scheme_unif_bound}
    There exists $M > 0$ and $T \in (0, \frac{\delta_\textnormal{in}}{M}]$ such that, for $\delta (t) \coloneqq \delta_\textnormal{in} - M t$, $(v^\varepsilon_k, \zeta^\varepsilon_k)$ is well defined and uniformly bounded in both $k \in \mathbb{N}$ and $\varepsilon \in [0, 1]$ in $\mathcal{C} ([0, T], \mathcal{H}_{\delta}^{\ell+1} \times \mathcal{H}_{\delta}^{\ell}) \cap L^2 ((0, T), \mathcal{H}_{\delta}^{\ell+\frac{3}{2}} \times \mathcal{H}_{\delta}^{\ell + \frac{1}{2}})$.
\end{lem}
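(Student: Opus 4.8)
The plan is to prove the lemma by induction on $k$, uniformly in $\varepsilon\in[0,1]$, controlling at each step the combined analytic energy
\[
\mathcal{N}_k(t) \coloneqq \mathcal{E}_{M,\ell+1,\delta}(v^\varepsilon_k)(t) + \mathcal{E}_{M,\ell,\delta}(\zeta^\varepsilon_k)(t).
\]
First I would check that the scheme is well defined at each step. The first equation of \eqref{sys:scheme_zeta_v} is a linear transport equation whose coefficient $v^\varepsilon_k$ and source $-\lambda\Re\zeta^\varepsilon_k$ are already known and analytic; since $\ell+1>\frac{d}{2}+1$, standard linear theory produces $v^\varepsilon_{k+1}$, and the energy estimate below upgrades it to the analytic class. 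The second equation is then solved by the Duhamel formula \eqref{eq:def_zeta_k_scheme}; the only point to note is that $e^{i\frac{\varepsilon}{2}t\Delta}$ is the Fourier multiplier $e^{-i\frac{\varepsilon}{2}t\abs{\xi}^2}$, of modulus one, hence an isometry of every $\mathcal{H}^\ell_\delta$, so it preserves analytic regularity and the representation stays in the right spaces.

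The whole argument rests on the smoothing produced by the shrinking radius $\delta(t)=\delta_\textnormal{in}-Mt$. By Lemma \ref{lem:prop_anal_norm_diff} with $\dot\delta=-M$, integrating in time turns $\mathcal{E}_{M,\ell,\delta}(g)(t)$ into simultaneous control of $\norm{g(t)}_{\ell,\delta(t)}^2$ and of $2M\tnorm{g}_{2,t,\ell+\frac{1}{2},\delta}^2$, i.e.\ a gain of half a derivative in $L^2_t$. The factor $2M$ is exactly what pairs with the elementary bound $\int_0^t\langle\xi\rangle\,e^{-2M\tau\langle\xi\rangle}\diff\tau\le\frac{1}{2M}$. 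Applying this to the constant-in-time data $v^\varepsilon_0=\nabla\phi^\varepsilon_\textnormal{in}$ and $\zeta^\varepsilon_0=\nabla\psi^\varepsilon_\textnormal{in}$ gives $2M\tnorm{v^\varepsilon_0}_{2,t,\ell+\frac{3}{2},\delta}^2\le\norm{\nabla\phi^\varepsilon_\textnormal{in}}_{\ell+1,\delta_\textnormal{in}}^2$ and likewise for $\zeta^\varepsilon_0$, so Assumption \ref{ass:bound} yields the base case $\mathcal{N}_0(t)\le 2\omega_\textnormal{in}$ for all $t$, uniformly in $\varepsilon$.

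For the induction step I would apply Lemma \ref{lem:toolbox} with $m=\ell$. Since $v^\varepsilon_{k+1}$ is decoupled from $\zeta^\varepsilon_{k+1}$, I bound it first: with $F=-(v^\varepsilon_k\cdot\nabla)v^\varepsilon_{k+1}-\lambda\Re\zeta^\varepsilon_k$, the convective term is estimated by \eqref{eq:tool1} and the source by the real-part bound \eqref{eq:tool3}. For $\zeta^\varepsilon_{k+1}$ the dispersive term $i\frac{\varepsilon}{2}\nabla\divg\zeta^\varepsilon_{k+1}$ is the $\theta_1\operatorname{D}g$ term and contributes nothing to the energy because $\Re\langle\zeta^\varepsilon_{k+1},i\nabla\divg\zeta^\varepsilon_{k+1}\rangle_{\ell,\delta}=0$ (Lemma \ref{lem:anal_prop}); the remaining source $G=-\nabla(v^\varepsilon_k\cdot\zeta^\varepsilon_k)-\nabla\divg v^\varepsilon_{k+1}+i\varepsilon\nabla(\zeta^\varepsilon_k\cdot\zeta^\varepsilon_k)$ is handled termwise by \eqref{eq:tool5}, by the derivative bound of Lemma \ref{lem:anal_prop} giving $\tnorm{\nabla\divg v^\varepsilon_{k+1}}_{2,t,\ell-\frac{1}{2},\delta}\le\tnorm{v^\varepsilon_{k+1}}_{2,t,\ell+\frac{3}{2},\delta}$, and --- crucially --- by \eqref{eq:tool7} for the $\varepsilon$-nonlinearity. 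The last choice matters: placing $i\varepsilon\nabla(\zeta^\varepsilon_k\cdot\zeta^\varepsilon_k)$ inside $G$ and invoking \eqref{eq:tool7} only requires $\zeta^\varepsilon_k\in L^\infty_T\mathcal{H}^\ell_\delta\cap L^2_T\mathcal{H}^{\ell+\frac{1}{2}}_\delta$, exactly the regularity at hand, whereas routing it through the $\theta_3$ slot of Lemma \ref{lem:toolbox} would demand the unavailable bound $\zeta^\varepsilon_k\in L^\infty_T\mathcal{H}^{\ell+1}_\delta$.

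It remains to close the induction, which I see as the main difficulty. Inserting these bounds into the two energy inequalities of Lemma \ref{lem:toolbox} and using Young's inequality, every quadratic term in the new iterate carries either the coefficient $\tnorm{v^\varepsilon_k}_{\infty,t,\ell+\frac{1}{2},\delta}/M$ or $1/M$ (after using $\tnorm{\cdot}_{2,t,\cdot,\delta}^2\le\frac{1}{2M}\mathcal{E}$), so for $M$ large these are absorbed into the $2M\tnorm{\cdot}_{2}^2$ part of the left-hand side, while the level-$k$ contributions are $\le R^2$ by the induction hypothesis times small factors. The coupling term $\nabla\divg v^\varepsilon_{k+1}$ links the bound on $\mathcal{E}(\zeta^\varepsilon_{k+1})$ to $\mathcal{E}(v^\varepsilon_{k+1})$, but this is harmless since $v^\varepsilon_{k+1}$ has already been estimated. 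The delicate point is the order of the constants: one first fixes $R\sim\sqrt{\omega_\textnormal{in}}$ so that the base case satisfies $\mathcal{N}_0\le R^2$, then chooses $M$ large (depending on $R$, $\lambda$, the constants $K^{\cdot}$ and $d$, but not on $\varepsilon$ or $k$) so that both the absorption works and the resulting bound returns $\mathcal{N}_{k+1}(t)\le R^2$, and finally sets $T\le\frac{\delta_\textnormal{in}}{M}$ so that $\delta$ stays nonnegative. Uniformity in $\varepsilon\in[0,1]$ is automatic, as $\varepsilon$ enters only through the factor $\varepsilon\le1$ in the two $\varepsilon$-terms and through the $\varepsilon$-uniform base case, and continuity in time in the top spaces --- hence membership in $\mathcal{C}([0,T],\mathcal{H}^{\ell+1}_\delta\times\mathcal{H}^\ell_\delta)$ --- follows from the continuity of the energies asserted in Lemma \ref{lem:toolbox}.
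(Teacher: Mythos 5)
Your proposal is correct and follows essentially the same route as the paper: the same iterative scheme, well-definedness via characteristics for the transport step and the (isometric) Schr\"odinger group for the Duhamel step, the base case from the exact identity $\mathcal{E}_{M,\ell,\delta}(f)(t)=\norm{f(0)}_{\ell,\delta_\textnormal{in}}^2$ for time-constant data, and the inductive step via Lemma \ref{lem:toolbox} with $m=\ell$, $\theta_1=\frac{\varepsilon}{2}$, \eqref{eq:tool1}, \eqref{eq:tool3}, \eqref{eq:tool5}--\eqref{eq:tool7} --- including your key observation that the $\varepsilon$-nonlinearity must go through \eqref{eq:tool7} rather than the $\theta_3$ slot, which is exactly the paper's choice ($\theta_6 = i\frac{\varepsilon}{2}$). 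The only (cosmetic) difference is bookkeeping in closing the induction: the paper fixes $M = M_1 + 2M_2$ explicitly and propagates the invariant with the reduced weight $2M_2$ (its Lemma \ref{lem:est_scheme}), whereas you keep the full weight $M$ and choose $M$ large to absorb, which works equally well.
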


\begin{proof}
We show this result by induction. The fact that $\zeta_0^\varepsilon$ and $v_0^\varepsilon$ are well defined is obviously true.
Since $\partial_t v^\varepsilon_0 = \partial_t \zeta^\varepsilon_0 = 0$, Lemma \ref{lem:toolbox} gives for all $t \geq 0$ with $\delta (t) = \delta_\textnormal{in} - M t$:
\begin{equation} \label{eq:est_scheme_0}
    \mathcal{E}_{M, \ell, \delta} (\zeta^\varepsilon_0) (t) + \mathcal{E}_{M, \ell + 1, \delta} (v^\varepsilon_0) (t) \leq \norm{\zeta^\varepsilon_\textnormal{in}}_{\ell,\delta_\textnormal{in}}^2 + \norm{v^\varepsilon_\textnormal{in}}_{\ell+1,\delta_\textnormal{in}}^2 \leq \omega_\textnormal{in}.
\end{equation}
Therefore, we have $(\zeta_0^\varepsilon, v_0^\varepsilon) \in L^\infty ((0, T), \mathcal{H}_{\delta}^{\ell+1} \times \mathcal{H}_{\delta}^{\ell}) \cap L^2 ((0, T), \mathcal{H}_{\delta}^{\ell+\frac{3}{2}} \times \mathcal{H}_{\delta}^{\ell + \frac{1}{2}})$ as long as we take $M > 0$.
Now, assume that it is true for some $k \geq 0$. With this property, $v^\varepsilon_{k+1}$ is solution of a transport equation with explicit smooth terms and is therefore well defined (thanks to characteristics). Then, $\zeta^\varepsilon_{k+1}$ is also well defined thanks to \eqref{eq:def_zeta_k_scheme} along with the property of the Schrödinger semigroup in analytic spaces.
Then, we use Lemma \ref{lem:toolbox} with $f = v^\varepsilon_{k+1}$, $g = \zeta^\varepsilon_{k+1}$, $m = \ell$, $\theta_1 = \frac{\varepsilon}{2}$, \eqref{eq:tool1}, \eqref{eq:tool3} with $\theta_4 = \lambda$ and \eqref{eq:tool5}-\eqref{eq:tool7} with $\theta_5 = 1$ and $\theta_6 = i \frac{\varepsilon}{2}$.
For that, set
\begin{gather*}
    \omega^\varepsilon_k (t) \coloneqq \norm{\zeta^\varepsilon_k}_{\infty, t, \ell,\delta}^2 + \norm{v^\varepsilon_k}_{\infty, t, \ell+1,\delta}^2, \\
    \eta^\varepsilon_k (t) \coloneqq \norm{\zeta^\varepsilon_k}_{2, t, \ell+\frac{1}{2},\delta}^2 + \norm{v^\varepsilon_k}_{2, t, \ell+\frac{3}{2},\delta}^2.
\end{gather*}
We also use the following computations:
\begin{equation*}
    2 \, \tnorm{v^\varepsilon_{k+1}}_{2, t, \ell + \frac{3}{2}, \delta} \tnorm{\zeta^\varepsilon_k}_{2, t, \ell + \frac{1}{2}, \delta} \leq \tnorm{v^\varepsilon_{k+1}}_{2, t, \ell + \frac{3}{2}, \delta}^2 + \tnorm{\zeta^\varepsilon_k}_{2, t, \ell + \frac{1}{2}, \delta}^2,
\end{equation*}
\begin{equation*}
    \tnorm{\zeta^\varepsilon_{k+1}}_{2, t, \ell + \frac{1}{2}, \delta} \tnorm{v^\varepsilon_k}_{\infty, t, \ell + \frac{1}{2}, \delta} \tnorm{\zeta^\varepsilon_k}_{2, t, \ell + \frac{1}{2}, \delta} \leq \frac{1}{2} \tnorm{v^\varepsilon_k}_{\infty, t, \ell + 1, \delta}^2 \tnorm{\zeta^\varepsilon_{k+1}}_{2, t, \ell + \frac{1}{2}, \delta}^2 + \frac{1}{2} \tnorm{\zeta^\varepsilon_k}_{2, t, \ell + \frac{1}{2}, \delta}^2,
\end{equation*}
\begin{equation*}
    \tnorm{\zeta^\varepsilon_{k+1}}_{2, t, \ell + \frac{1}{2}, \delta} \tnorm{v^\varepsilon_{k+1}}_{2, t, \ell + \frac{3}{2}, \delta} \leq \frac{1}{2} \eta^\varepsilon_{k+1},
\end{equation*}
\begin{equation*}
    \tnorm{\zeta^\varepsilon_{k+1}}_{2, t, \ell + \frac{1}{2}, \delta} \tnorm{\zeta^\varepsilon_k}_{\infty, t, \ell, \delta} \tnorm{\zeta^\varepsilon_k}_{2, t, \ell + \frac{1}{2}, \delta} \leq \frac{1}{2} \tnorm{\zeta^\varepsilon_k}_{\infty, t, \ell, \delta}^2 \tnorm{\zeta^\varepsilon_{k+1}}_{2, t, \ell + \frac{1}{2}, \delta}^2 + \frac{1}{2} \tnorm{\zeta^\varepsilon_k}_{2, t, \ell + \frac{1}{2}, \delta}^2.
\end{equation*}
Therefore, for all $\varepsilon \leq 1$ and $t \geq 0$ such that $\delta (t) \geq 0$ and using Assumption \ref{ass:bound},
\begin{multline} \label{eq:est_scheme_k}
    \mathcal{E}_{M, \ell, \delta} (\zeta^\varepsilon_{k+1}) (t) + \mathcal{E}_{M, \ell + 1, \delta} (v^\varepsilon_{k+1}) (t) \leq \omega_\textnormal{in} + (2 C_{\ell} (\sqrt{\omega^\varepsilon_k (t)} + \omega^\varepsilon_k (t)) + 2 \abs{\lambda} + 1 ) \eta^\varepsilon_{k+1} (t) \\ + 2 (\abs{\lambda} + C_\ell) \, \tnorm{\zeta^\varepsilon_k}_{2, t, \ell + \frac{1}{2}, \delta}^2,
\end{multline}
for some $C_\ell > 0$ depending only on $\ell$.
Set
\begin{gather*}
    M_1 \coloneqq C_{\ell} (\sqrt{2 \omega_\textnormal{in}} + 2 \omega_\textnormal{in}) + \abs{\lambda} + \frac{1}{2}, \\
    M_2 \coloneqq \abs{\lambda} + C_\ell, \\
    M \coloneqq M_1 + 2 M_2.
\end{gather*}
Moreover, take $\delta (t) = \delta_\textnormal{in} - M t$ and set $T = M^{-1} \delta_\textnormal{in}$ so that $\delta (t) \geq 0$ for all $t \in [0, T]$.
From these estimates and definitions, we can prove that the scheme is uniformly bounded thanks to the following Lemma.
\begin{lem} \label{lem:est_scheme}
    For all $\varepsilon \in [0,1]$, $t \in [0, T]$ and $k \in \mathbb{N}$, there holds
    \begin{equation*}
        \mathcal{E}_{2 M_2, \ell, \delta} (\zeta^\varepsilon_{k}) (t) + \mathcal{E}_{2 M_2, \ell + 1, \delta} (v^\varepsilon_{k}) (t) \leq 2 \omega_\textnormal{in}.
    \end{equation*}
\end{lem}
The proof is therefore complete.
\end{proof}

\begin{proof}[Proof of Lemma \ref{lem:est_scheme}]
    We prove this lemma by induction on $k$.
    The estimate for $k = 0$ follows from \eqref{eq:est_scheme_0} and the facts that
    \begin{equation*}
        - M \leq - 2 M_2.
    \end{equation*}
    Now, for $k \in \mathbb{N}$, assuming that the estimate holds at rank $k$, we have in particular the fact that $\omega^\varepsilon_k (t) \leq 2 \omega_\textnormal{in}$ for all $t \in [0, T]$, so that \eqref{eq:est_scheme_k} becomes
    \begin{equation*}
        \mathcal{E}_{2 M_2, \ell, \delta} (\zeta^\varepsilon_{k+1}) (t) + \mathcal{E}_{2 M_2, \ell + 1, \delta} (v^\varepsilon_{k+1}) (t) \leq \omega_\textnormal{in} + 2 M_2 \eta^\varepsilon_k (t).
    \end{equation*}
    Using again the property at rank $k$, we have for all $t \in [0, T]$
    \begin{equation*}
        2 M_2 \eta^\varepsilon_k (t) \leq \omega_\textnormal{in},
    \end{equation*}
    and thus the property at rank $k+1$ is proved.
\end{proof}

\subsection{Convergence of the scheme} \label{subsec:conv_scheme}

We proved that the scheme is well defined. We now need to show that this scheme converges as $k \rightarrow \infty$ in order to get a solution to \eqref{sys:euler_modified_semicla} from this limit.

\begin{lem} \label{lem:cauchy_seq_scheme}
    Up to taking a larger $M > 0$ and a smaller $T > 0$, for any $\varepsilon \in [0, 1]$, $(\zeta^\varepsilon_k, v^\varepsilon_k)_k$ is a Cauchy sequence in \\ $\mathcal{C} ([0, T],  \mathcal{H}_{\delta}^{\ell-\frac{1}{2}} \times \mathcal{H}_{\delta}^{\ell+\frac{1}{2}}) \cap L^2 ((0, T), \mathcal{H}_{\delta}^{\ell} \times \mathcal{H}_{\delta}^{\ell+1})$.
\end{lem}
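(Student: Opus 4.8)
The plan is to run the classical ``uniform bounds in the high norm, contraction in a norm with half a derivative less'' argument, treating the uniform control furnished by Lemma~\ref{lem:scheme_unif_bound} as fixed input. Write $V_k \coloneqq v^\varepsilon_{k+1} - v^\varepsilon_k$ and $Z_k \coloneqq \zeta^\varepsilon_{k+1} - \zeta^\varepsilon_k$. Subtracting two consecutive instances of \eqref{sys:scheme_zeta_v} and splitting the quadratic terms by adding and subtracting intermediate products, these differences solve
\[ \partial_t V_k + (v^\varepsilon_k \cdot \nabla) V_k + (V_{k-1} \cdot \nabla) v^\varepsilon_k + \lambda \Re Z_{k-1} = 0, \qquad V_k(0) = 0, \]
\[ \partial_t Z_k + \nabla(v^\varepsilon_k \cdot Z_{k-1}) + \nabla(V_{k-1} \cdot \zeta^\varepsilon_{k-1}) + \nabla \divg V_k = i \tfrac{\varepsilon}{2} \nabla \divg Z_k + i \varepsilon \nabla \bigl( Z_{k-1} \cdot (\zeta^\varepsilon_k + \zeta^\varepsilon_{k-1}) \bigr), \qquad Z_k(0) = 0. \]
The decisive structural features are that the initial data vanish (so no initial term survives the energy estimate, unlike in the convergence estimates of Theorem~\ref{th:main_th}), and that every source is either a product of a \emph{uniformly bounded} factor $v^\varepsilon_k,\zeta^\varepsilon_k,\zeta^\varepsilon_{k-1}$ (all dominated by a constant $B$ issued from Lemma~\ref{lem:scheme_unif_bound}) with a \emph{previous} difference $V_{k-1},Z_{k-1}$, or a \emph{same-index} term, namely $(v^\varepsilon_k\cdot\nabla)V_k$ and $\nabla\divg V_k$, meant to be absorbed.

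Next I would apply the toolbox Lemma~\ref{lem:toolbox} to $f = V_k$ and $g = Z_k$ at the reduced level $m = \ell - \tfrac12$ (licit since $\ell > \tfrac d2$ yields $m > \tfrac{d-1}2$), choosing the skew parameter $\theta_1 = \tfrac\varepsilon2$ so that $i\tfrac\varepsilon2\nabla\divg Z_k$ drops out of the energy by part~4 of Lemma~\ref{lem:anal_prop}, and $\theta_2 = \theta_3 = 0$, the remaining $\varepsilon$-term being kept inside $G$. Setting
\[ \mathcal{A}^v_k(t) \coloneqq \norm{V_k(t)}_{\ell+\frac12,\delta(t)}^2 + 2M\tnorm{V_k}_{2,t,\ell+1,\delta}^2, \qquad \mathcal{A}^\zeta_k(t) \coloneqq \norm{Z_k(t)}_{\ell-\frac12,\delta(t)}^2 + 2M\tnorm{Z_k}_{2,t,\ell,\delta}^2, \]
the lemma gives $\mathcal{A}^v_k \le 2\tnorm{V_k}_{2,t,\ell+1,\delta}\tnorm{F}_{2,t,\ell,\delta}$ and $\mathcal{A}^\zeta_k \le 2\tnorm{Z_k}_{2,t,\ell,\delta}\tnorm{G}_{2,t,\ell-1,\delta}$. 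I would then bound $\tnorm F_{2,t,\ell,\delta}$ by \eqref{eq:tool1} for $(v^\varepsilon_k\cdot\nabla)V_k$, by the third bullet of Lemma~\ref{lem:toolbox} for $(V_{k-1}\cdot\nabla)v^\varepsilon_k$, and by $\tnorm{\lambda\Re Z_{k-1}}_{2,t,\ell,\delta}\le|\lambda|\tnorm{Z_{k-1}}_{2,t,\ell,\delta}$ (cf. \eqref{eq:tool3}); and $\tnorm G_{2,t,\ell-1,\delta}$ by \eqref{eq:tool5} for the two products $\nabla(v^\varepsilon_k\cdot Z_{k-1})$, $\nabla(V_{k-1}\cdot\zeta^\varepsilon_{k-1})$ and for the $\varepsilon$-term (using $\varepsilon\le1$), and by \eqref{eq:tool6} for $\nabla\divg V_k$. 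Each factor carrying a $v^\varepsilon_k,\zeta^\varepsilon_k,\zeta^\varepsilon_{k-1}$ is replaced by $B$.

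The conclusion then comes from the absorption step, which is precisely where the $\mathcal{E}_M$-structure is used. After inserting these bounds and applying Young's inequality term by term, the genuinely same-index quadratic pieces ($2K^\ell B\,\tnorm{V_k}^2_{2,t,\ell+1,\delta}$ in the $v$-estimate, and the $\tnorm{V_k}_{2,t,\ell+1,\delta}$, $\tnorm{Z_k}_{2,t,\ell,\delta}$ cross-terms) are absorbed into the $2M\tnorm{\cdot}^2$ summands of $\mathcal{A}^v_k,\mathcal{A}^\zeta_k$ once $M$ is large. The surviving previous-index contributions are of the form $\tfrac CM\bigl(\tnorm{V_{k-1}}^2_{2,t,\ell,\delta}+\tnorm{Z_{k-1}}^2_{2,t,\ell,\delta}\bigr)$, dominated by $\tfrac CM\mathcal{A}_{k-1}(t)$, with $\mathcal{A}_{k-1}\coloneqq\mathcal A^v_{k-1}+\mathcal A^\zeta_{k-1}$, via $\tnorm{V_{k-1}}^2_{2,t,\ell,\delta}\le\tnorm{V_{k-1}}^2_{2,t,\ell+1,\delta}\le\tfrac1{2M}\mathcal A^v_{k-1}$ and similarly for $Z_{k-1}$; the only coupling, $\tnorm{V_k}_{2,t,\ell+1,\delta}$ entering the $\zeta$-estimate through $\nabla\divg V_k$, is already controlled by $\mathcal A^v_k\le\tfrac CM\mathcal A_{k-1}$. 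Collecting everything yields $\mathcal A_k(t)\le\tfrac CM\mathcal A_{k-1}(t)$ on $[0,T]$ with $C$ independent of $k$ and $\varepsilon$; enlarging $M$ (hence shrinking $T=\delta_\textnormal{in}/M$, on which the uniform bounds of Lemma~\ref{lem:scheme_unif_bound} still hold) to make $\tfrac CM\le\tfrac12$ gives $\mathcal A_k\le 2^{-k}\mathcal A_0$. Since $\mathcal A_k$ dominates $\sup_{[0,T]}\norm{V_k}^2_{\ell+\frac12,\delta}$, $\sup_{[0,T]}\norm{Z_k}^2_{\ell-\frac12,\delta}$, $\tnorm{V_k}^2_{2,T,\ell+1,\delta}$ and $\tnorm{Z_k}^2_{2,T,\ell,\delta}$, this geometric decay makes the series summable and $(\zeta^\varepsilon_k,v^\varepsilon_k)_k$ Cauchy in exactly $\mathcal{C}([0,T],\mathcal H_\delta^{\ell-\frac12}\times\mathcal H_\delta^{\ell+\frac12})\cap L^2((0,T),\mathcal H_\delta^{\ell}\times\mathcal H_\delta^{\ell+1})$.

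The main difficulty is not a single hard step but the bookkeeping of the half-derivative gap together with the same-index coupling. One must use that the toolbox bound on $\mathcal{E}_{M,m+1}$ in fact controls the stronger quantity whose $L^2$-in-time norm sits half a derivative higher (level $\ell+1$ for $V_k$, level $\ell$ for $Z_k$), for otherwise the source $\nabla\divg V_k$ in the $Z_k$-equation — which costs a full derivative on $V_k$ — could not be closed against $\mathcal A^v_k$. Arranging all Young absorptions at these matching levels, and checking they are uniform in $\varepsilon\in[0,1]$ (the $\varepsilon$-terms only help, through $\varepsilon\le1$) and in $k$, is the delicate point; once the energies $\mathcal A^v_k,\mathcal A^\zeta_k$ are calibrated accordingly, the contraction is an immediate consequence of the large-$M$ absorption.
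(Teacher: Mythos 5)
Your proposal is correct and follows essentially the same route as the paper: write the system satisfied by consecutive differences $(Z_k, V_k)$ with zero initial data, apply the toolbox Lemma~\ref{lem:toolbox} at level $m = \ell - \tfrac12$ with $\theta_1 = \tfrac\varepsilon2$ together with the uniform bounds of Lemmas~\ref{lem:scheme_unif_bound}--\ref{lem:est_scheme}, absorb the same-index terms (including $\nabla \divg V_k$) into the $2M\tnorm{\cdot}^2$ part of the energies after enlarging $M$, and conclude by geometric decay of $\mathcal{A}_k$ and summation of the telescoping series. The only deviations are cosmetic: you split $v^\varepsilon_k \cdot \zeta^\varepsilon_k - v^\varepsilon_{k-1} \cdot \zeta^\varepsilon_{k-1}$ slightly differently, you obtain the contraction factor $\tfrac12$ by taking $M$ large where the paper extracts it directly from the ratio $2M_2 N^\varepsilon_k \le \tfrac12 \mathcal{I}^\varepsilon_k$ built into $\mathcal{E}_{2M_2}$, and you leave the (immediate) boundedness of the base term $\mathcal{A}_0$ implicit, which the paper records as $\mathcal{I}_1^\varepsilon \le 4\omega_\textnormal{in}$.
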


\begin{proof}
We know that $(\zeta^\varepsilon_k, v^\varepsilon_k)$ is uniformly bounded in $L^\infty ((0, T), \mathcal{H}_{\delta}^\ell \times \mathcal{H}_{\delta}^{\ell+1}) \cap L^2 ((0, T), \mathcal{H}_{\delta}^{\ell+\frac{1}{2}} \times \mathcal{H}_{\delta}^{\ell+\frac{1}{2}})$ with Lemma \ref{lem:scheme_unif_bound}.
Set $Z^\varepsilon_{k+1} \coloneqq \zeta^\varepsilon_{k+1} - \zeta^\varepsilon_k$ and $V^\varepsilon_{k+1} \coloneqq v^\varepsilon_{k+1} - v^\varepsilon_k$ for $k \in \mathbb{N}$.
Then, we obtain for $k \geq 1$:
\begin{System} \notag
    \partial_t V^\varepsilon_{k+1} + (v^\varepsilon_{k} \cdot \nabla) V^\varepsilon_{k+1} + (V^\varepsilon_k \cdot \nabla) v^\varepsilon_{k} + \lambda \, \Re Z^\varepsilon_k = 0, \\
    \partial_t Z^\varepsilon_{k+1} + \nabla \Bigl( V^\varepsilon_k \cdot \zeta^\varepsilon_k \Bigr) + \nabla \Bigl( v^\varepsilon_{k-1} \cdot Z^\varepsilon_k \Bigr) + \nabla \divg V^\varepsilon_{k+1} = i \frac{\varepsilon}{2} \Bigl( \nabla \divg Z^\varepsilon_{k+1} + 2 \, \nabla ( Z^\varepsilon_k \cdot \zeta^\varepsilon_k ) + 2 \, \nabla ( \zeta^\varepsilon_{k-1} \cdot Z^\varepsilon_k ) \Bigr),
\end{System}
with zero initial data. 
Set
\begin{gather*}
%    \Omega^\varepsilon_k \coloneqq \norm{Z^\varepsilon_k }_{\infty, \ell - \frac{1}{2},\delta}^2 + \norm{V^\varepsilon_k}_{\infty, \ell + \frac{1}{2},\delta}^2, \\
    N^\varepsilon_k (t) \coloneqq \tnorm{Z^\varepsilon_{k}}_{2, t, \ell, \delta}^2 + \tnorm{V^\varepsilon_{k}}_{2, t, \ell + 1, \delta }^2.
\end{gather*}
From the previous system and Lemma \ref{lem:toolbox} with $m = \ell - \frac{1}{2}$, \eqref{eq:tool1}-\eqref{eq:tool3}, \eqref{eq:tool6} and three times \eqref{eq:tool5} in a similar way as previously, along with Lemma \ref{lem:est_scheme} and the following computations:
\begin{equation*}
    \tnorm{V^\varepsilon_{k+1}}_{2, t, \ell+1, \delta} \tnorm{V^\varepsilon_k}_{2, t, \ell, \delta} \tnorm{v^\varepsilon_{k}}_{\infty, t, \ell + 1, \delta} \leq \omega_\textnormal{in} \tnorm{V^\varepsilon_{k+1}}_{2, t, \ell+1, \delta}^2 + \frac{1}{2} \tnorm{V^\varepsilon_k}_{2, t, \ell, \delta}^2,
\end{equation*}
\begin{equation*}
    \tnorm{Z^\varepsilon_{k+1}}_{2, t, \ell, \delta} \tnorm{V^\varepsilon_k}_{2, t, \ell, \delta} \tnorm{\zeta^\varepsilon_k}_{\infty, t, \ell, \delta} \leq \omega_\textnormal{in} \tnorm{Z^\varepsilon_{k+1}}_{2, t, \ell, \delta}^2 + \frac{1}{2} \tnorm{V^\varepsilon_k}_{2, t, \ell, \delta}^2,
\end{equation*}
\begin{equation*}
    \tnorm{Z^\varepsilon_{k+1}}_{2, t, \ell, \delta} \tnorm{Z^\varepsilon_k}_{2, t, \ell, \delta} \sqrt{2 \omega_\textnormal{in}} \leq \omega_\textnormal{in} \tnorm{Z^\varepsilon_{k+1}}_{2, t, \ell, \delta}^2 + \frac{1}{2} \tnorm{Z^\varepsilon_k}_{2, t, \ell, \delta}^2,
\end{equation*}
we get:
\begin{multline*}
    \mathcal{E}_{M, \ell - \frac{1}{2}, \delta} (Z^\varepsilon_{k+1}) (t) + \mathcal{E}_{M, \ell + \frac{1}{2}, \delta} (V^\varepsilon_{k+1}) (t) \leq ( 2 K^\ell \sqrt{2 \omega_\textnormal{in}} + 2 K^\ell \omega_\textnormal{in} (2 + \varepsilon) + 1 + 2 \abs{\lambda} )
        N^\varepsilon_{k+1} (t) \\
            + 2 K^\ell \norm{V^\varepsilon_k}_{2, \ell, \delta}^2 + (2 \abs{\lambda} + K^\ell (1 + 2 \varepsilon)) \tnorm{Z^\varepsilon_k}_{2, t, \ell, \delta}^2.
\end{multline*}
Therefore, for $\varepsilon \leq 1$ and up to taking a slightly larger $C_\ell$ in $M_1$, we get
\begin{equation} \label{eq:est_conv_scheme_1}
    \mathcal{E}_{2 M_2, \ell - \frac{1}{2}, \delta} (Z^\varepsilon_{k+1}) (t) + \mathcal{E}_{2 M_2, \ell + \frac{1}{2}, \delta} (V^\varepsilon_{k+1}) (t) \leq 2 M_2 N^\varepsilon_{k} (t).
\end{equation}
From this estimate, we can prove a uniform estimate.

\begin{lem} \label{lem:unif_sec_est_scheme}
%    Set
%    \begin{equation*}
%        \mathcal{I}_k^\varepsilon \coloneqq \Omega_{k} + 2 M_2 N_k^\varepsilon.
%    \end{equation*}
    %
    There holds for all $k \in \mathbb{N}$ and $t \in [0, T]$
    \begin{equation*}
        \mathcal{I}^\varepsilon_{k+1} (t) \coloneqq \mathcal{E}_{2 M_2, \ell - \frac{1}{2}, \delta} (Z^\varepsilon_{k+1}) (t) + \mathcal{E}_{2 M_2, \ell + \frac{1}{2}, \delta} (V^\varepsilon_{k+1}) (t) \leq \omega_\textnormal{in} \, 2^{-k+2}.
    \end{equation*}
\end{lem}

We first finish the proof of Lemma \ref{lem:cauchy_seq_scheme} before proving this Lemma. For any $j > k \geq 1$, there holds with Lemma \ref{lem:unif_sec_est_scheme}:
    \begin{align*}
        \tnorm{\zeta^\varepsilon_j - \zeta^\varepsilon_k}_{\infty, T, \ell - \frac{1}{2}, \delta} &= \tnorm{\sum_{m=k}^{j-1} Z^\varepsilon_{m+1}}_{\infty, T, \ell - \frac{1}{2}, \delta} \\
                &\leq \sum_{m=k}^{j-1} \tnorm{Z^\varepsilon_{m+1} (t)}_{\infty, T, \ell - \frac{1}{2}, \delta} \\
                &\leq \sum_{m=k}^{j-1} \sqrt{\omega_\textnormal{in}} (\sqrt{2})^{-m+2} \\
                &\leq C \sqrt{2}^{-k}.
    \end{align*}
    Therefore $(\zeta^\varepsilon_k)_k$ is a Cauchy sequence in $\mathcal{C} ([0, T],  \mathcal{H}_{\delta}^{\ell-\frac{1}{2}})$ since it is continuous with values in the same space. The same kind of estimate can be proved for $v^\varepsilon_k$ and for the $L^2 ((0, T), \mathcal{H}_{\delta}^{\ell} \times \mathcal{H}_{\delta}^{\ell+1})$ norm in the same way.
\end{proof}

\begin{proof}[Proof of Lemma \ref{lem:unif_sec_est_scheme}]
    \eqref{eq:est_conv_scheme_1} leads to
    \begin{equation*}
        \mathcal{I}_{k+1}^\varepsilon \leq \frac{1}{2} \mathcal{I}_k^\varepsilon.
    \end{equation*}
    Thus, we get
    \begin{equation*}
        \mathcal{I}_{k+1}^\varepsilon \leq 2^{-k} \mathcal{I}_1^\varepsilon.
    \end{equation*}
    Moreover,
    \begin{equation*}
        \tnorm{Z_1^\varepsilon}_{\infty, T, \ell - \frac{1}{2}, \delta} + \tnorm{V_1^\varepsilon}_{\infty, T, \ell + \frac{1}{2}, \delta} \leq \omega^\varepsilon_1 (t) + \omega^\varepsilon_0  (t),
    \end{equation*}
    and
    \begin{equation*}
        2 M_2 N^\varepsilon_1 (t) \leq 2 M_2 \int_0^t \Bigl( \norm{\zeta^\varepsilon_{1} (\tau)}_{\ell, \delta}^2 + \norm{v^\varepsilon_{1} (\tau)}_{\ell + 1, \delta}^2 \Bigr) \diff \tau + 2 M_2 \int_0^t \Bigl( \norm{\zeta^\varepsilon_{0} (\tau)}_{\ell, \delta}^2 + \norm{v^\varepsilon_{0} (\tau)}_{\ell + 1, \delta}^2 \Bigr) \diff \tau.
    \end{equation*}
    Therefore, from Lemma \ref{lem:est_scheme}, we can deduce that
    \begin{equation*}
        \mathcal{I}_1^\varepsilon \leq 4 \omega_\textnormal{in},
    \end{equation*}
    and then the conclusion.
\end{proof}

From Lemma \ref{lem:cauchy_seq_scheme} along with Lemma \ref{lem:est_scheme}, we can complete the proof of the existence of a solution to \eqref{sys:euler_modified_semicla} and the uniform estimates in $\varepsilon$.

\begin{cor}
    $(\zeta^\varepsilon_k, v^\varepsilon_k)_k$ converges in $\mathcal{C} ([0, T],  \mathcal{H}_{\delta}^{\ell-\frac{1}{2}} \times \mathcal{H}_{\delta}^{\ell+\frac{1}{2}}) \cap L^2 ((0, T), \mathcal{H}_{\delta}^{\ell} \times \mathcal{H}_{\delta}^{\ell+1})$ to some $(\zeta^\varepsilon, v^\varepsilon)$ solution to \eqref{sys:euler_modified_semicla}, which also satisfies
    \begin{equation*}
        \norm{\zeta^\varepsilon (t)}_{\ell, \delta}^2 + \norm{v^\varepsilon (t)}_{\ell + 1, \delta}^2 + 4 M_2 \int_0^t \Bigl( \norm{\zeta^\varepsilon (\tau)}_{\ell + \frac{1}{2}, \delta}^2 + \norm{v^\varepsilon (\tau)}_{\ell + \frac{3}{2}, \delta}^2 \Bigr) \diff \tau \leq 2 \omega_\textnormal{in}.
    \end{equation*}
\end{cor}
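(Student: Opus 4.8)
The plan is to combine the contraction estimate of Lemma \ref{lem:cauchy_seq_scheme} with the uniform bound of Lemma \ref{lem:est_scheme}, and then pass to the limit in the scheme \eqref{sys:scheme_zeta_v}. First I would observe that Lemma \ref{lem:cauchy_seq_scheme} exhibits $(\zeta^\varepsilon_k, v^\varepsilon_k)_k$ as a Cauchy sequence in the Banach space $\mathcal{C}([0,T], \mathcal{H}_\delta^{\ell-\frac{1}{2}}\times\mathcal{H}_\delta^{\ell+\frac{1}{2}}) \cap L^2((0,T), \mathcal{H}_\delta^\ell \times \mathcal{H}_\delta^{\ell+1})$, which is complete; hence it converges to a limit $(\zeta^\varepsilon, v^\varepsilon)$ in this space. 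Convergence in $\mathcal{C}([0,T],\cdot)$ yields pointwise-in-time convergence, so in particular $(\zeta^\varepsilon(0), v^\varepsilon(0)) = (\nabla\psi_\textnormal{in}^\varepsilon, \nabla\phi_\textnormal{in}^\varepsilon)$, the prescribed initial data.

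Next I would upgrade the regularity and establish the announced bound. By Lemma \ref{lem:scheme_unif_bound}, and more precisely by the quantitative Lemma \ref{lem:est_scheme}, the sequence is bounded, uniformly in both $k$ and $\varepsilon$, in $L^\infty((0,T), \mathcal{H}_\delta^\ell\times\mathcal{H}_\delta^{\ell+1}) \cap L^2((0,T), \mathcal{H}_\delta^{\ell+\frac{1}{2}}\times\mathcal{H}_\delta^{\ell+\frac{3}{2}})$. Since these are reflexive (resp. dual of separable) spaces, up to a subsequence the iterates converge weakly-$*$ there; the weak limit must coincide with the strong limit $(\zeta^\varepsilon, v^\varepsilon)$, which therefore belongs to these higher-regularity spaces. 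Passing to the limit $k\to\infty$ in the inequality of Lemma \ref{lem:est_scheme} and invoking weak lower semicontinuity of the norms then gives exactly
\begin{equation*}
    \norm{\zeta^\varepsilon(t)}_{\ell,\delta}^2 + \norm{v^\varepsilon(t)}_{\ell+1,\delta}^2 + 4M_2\int_0^t\Bigl(\norm{\zeta^\varepsilon(\tau)}_{\ell+\frac{1}{2},\delta}^2 + \norm{v^\varepsilon(\tau)}_{\ell+\frac{3}{2},\delta}^2\Bigr)\diff\tau \leq 2\omega_\textnormal{in}.
\end{equation*}

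It remains to check that the limit solves \eqref{sys:euler_modified_semicla}, which I would do by passing to the limit term by term in \eqref{sys:scheme_zeta_v} (conveniently in the Duhamel form \eqref{eq:def_zeta_k_scheme} for the second equation and the characteristics formulation for the first). The linear contributions pass to the limit directly from the strong convergence. For the bilinear terms $(v^\varepsilon_k\cdot\nabla)v^\varepsilon_{k+1}$, $\nabla(v^\varepsilon_k\cdot\zeta^\varepsilon_k)$ and $\nabla(\zeta^\varepsilon_k\cdot\zeta^\varepsilon_k)$, I would split each as a ``difference times uniformly bounded factor'' plus ``uniformly bounded factor times difference'' and estimate every piece using the tame product estimates of Lemma \ref{lem:anal_prop} (part \ref{part3}) and Lemma \ref{lem:toolbox} (in particular \eqref{eq:tool1}, \eqref{eq:tool5} and \eqref{eq:tool7}): the strong convergence of the differences in the lower-regularity norm, combined with the uniform higher-regularity bound from the previous step, forces each product to converge to the expected limit. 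The Schrödinger semigroup $e^{i\frac{\varepsilon}{2}(t-\tau)\Delta}$ is an isometry on each $\mathcal{H}_\delta^\ell$, so it commutes harmlessly with this passage to the limit.

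The hard part will be precisely this last step: the strong convergence is available only one half-derivative below the level at which the quadratic nonlinearities naturally live, so one must exploit the $L^2$-in-time gain of half a derivative --- encoded in the integral term of $\mathcal{E}_{M,\ell,\delta}$ and originating from $\dot\delta<0$ in Lemma \ref{lem:prop_anal_norm_diff} --- to absorb this regularity mismatch without loss. One must also treat the transport product carrying the index $k+1$ simultaneously with those at index $k$. Once this is handled, uniqueness of the limit guarantees that $(\zeta^\varepsilon, v^\varepsilon)$ does not depend on the extracted subsequence, which completes the argument.
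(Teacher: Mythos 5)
Your proposal is correct and follows essentially the same route as the paper, which obtains the corollary precisely by combining the Cauchy-sequence convergence of Lemma \ref{lem:cauchy_seq_scheme} (completeness of the space giving the strong limit) with the uniform bound of Lemma \ref{lem:est_scheme} (weak compactness and lower semicontinuity giving the higher regularity and the quantitative estimate, whose constant $4M_2$ matches $\mathcal{E}_{2M_2,\ell,\delta}+\mathcal{E}_{2M_2,\ell+1,\delta}$), before passing to the limit term by term in the scheme. Your closing caution about the half-derivative mismatch is somewhat overstated: once the strong convergence and uniform bounds are in hand, the quadratic terms converge in low-regularity (even distributional) norms via the splitting you describe, and the regularity of the limit then upgrades the equation to the stated sense.
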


\subsection{Uniqueness of the solution}

We have just proved the existence of a solution to \eqref{sys:euler_modified_semicla}. We now prove the uniqueness of this solution thanks to similar estimates.

\begin{lem}
    The solution $(\zeta^\varepsilon, v^\varepsilon)$ to \eqref{sys:euler_modified_semicla} in $\mathcal{C} ([0, T],  \mathcal{H}_{\delta}^{\ell-\frac{1}{2}} \times \mathcal{H}_{\delta}^{\ell+\frac{1}{2}}) \cap L^\infty ((0, T),  \mathcal{H}_{\delta}^{\ell} \times \mathcal{H}_{\delta}^{\ell+1}) \cap L^2 ((0, T), \mathcal{H}_{\delta}^{\ell+\frac{1}{2}} \times \mathcal{H}_{\delta}^{\ell+\frac{3}{2}})$ is unique.
\end{lem}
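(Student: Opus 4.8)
The plan is to prove uniqueness by an energy estimate on the difference of two solutions, mirroring exactly the contraction estimate of Section~\ref{subsec:conv_scheme}. Let $(\zeta^\varepsilon, v^\varepsilon)$ and $(\tilde{\zeta}^\varepsilon, \tilde{v}^\varepsilon)$ be two solutions in the stated class, and set $Z \coloneqq \zeta^\varepsilon - \tilde{\zeta}^\varepsilon$ and $V \coloneqq v^\varepsilon - \tilde{v}^\varepsilon$, which share the same initial data and hence satisfy $Z(0) = V(0) = 0$. Subtracting the two copies of \eqref{sys:euler_modified_semicla} and using the bilinear decompositions $(v^\varepsilon \cdot \nabla) v^\varepsilon - (\tilde{v}^\varepsilon \cdot \nabla) \tilde{v}^\varepsilon = (v^\varepsilon \cdot \nabla) V + (V \cdot \nabla) \tilde{v}^\varepsilon$, $v^\varepsilon \cdot \zeta^\varepsilon - \tilde{v}^\varepsilon \cdot \tilde{\zeta}^\varepsilon = v^\varepsilon \cdot Z + V \cdot \tilde{\zeta}^\varepsilon$, and $\zeta^\varepsilon \cdot \zeta^\varepsilon - \tilde{\zeta}^\varepsilon \cdot \tilde{\zeta}^\varepsilon = \zeta^\varepsilon \cdot Z + Z \cdot \tilde{\zeta}^\varepsilon$ yields
\begin{gather*}
\partial_t V + (v^\varepsilon \cdot \nabla) V + (V \cdot \nabla) \tilde{v}^\varepsilon + \lambda \, \Re Z = 0, \\
\partial_t Z + \nabla (v^\varepsilon \cdot Z) + \nabla (V \cdot \tilde{\zeta}^\varepsilon) + \nabla \divg V = i \frac{\varepsilon}{2} \Bigl( \nabla \divg Z + 2 \, \nabla (\zeta^\varepsilon \cdot Z) + 2 \, \nabla (Z \cdot \tilde{\zeta}^\varepsilon) \Bigr),
\end{gather*}
with zero initial data. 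This is precisely the difference system treated in the proof of Lemma~\ref{lem:cauchy_seq_scheme}, except that every quantity now sits at the \emph{same} iterate level: the roles of $(Z^\varepsilon_{k+1}, V^\varepsilon_{k+1})$ and of $(Z^\varepsilon_k, V^\varepsilon_k)$ are both played by $(Z, V)$, while the (uniformly bounded) variable coefficients $v^\varepsilon, \tilde{v}^\varepsilon, \zeta^\varepsilon, \tilde{\zeta}^\varepsilon$ replace the scheme iterates.

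Next I would apply Lemma~\ref{lem:toolbox} with $m = \ell - \frac{1}{2}$ — which satisfies the hypotheses since $\ell > \frac{d}{2}$ gives $\ell - \frac{1}{2} > \frac{d-1}{2}$, and the regularity $Z \in \mathcal{C}_T \mathcal{H}_\delta^{\ell - \frac{1}{2}} \cap L^2_T \mathcal{H}_\delta^{\ell + \frac{1}{2}}$, $V \in \mathcal{C}_T \mathcal{H}_\delta^{\ell + \frac{1}{2}} \cap L^2_T \mathcal{H}_\delta^{\ell + \frac{3}{2}}$ of the difference is exactly what is required at this level. Estimating the transport and source terms with \eqref{eq:tool1}, \eqref{eq:tool3} (with $\theta_4 = \lambda$), three uses of \eqref{eq:tool5}, and \eqref{eq:tool6}, and bounding the coefficient norms by $\norm{\zeta^\varepsilon(t)}_{\ell,\delta}^2 + \norm{v^\varepsilon(t)}_{\ell+1,\delta}^2 \le 2\omega_\textnormal{in}$ (and likewise for the tilde solution) from the existence part, together with the same elementary Young splittings used before, produces the exact analogue of \eqref{eq:est_conv_scheme_1}, namely
\begin{equation*}
    \mathcal{E}_{2 M_2, \ell - \frac{1}{2}, \delta} (Z) (t) + \mathcal{E}_{2 M_2, \ell + \frac{1}{2}, \delta} (V) (t) \leq 2 M_2 \Bigl( \tnorm{Z}_{2, t, \ell, \delta}^2 + \tnorm{V}_{2, t, \ell + 1, \delta}^2 \Bigr),
\end{equation*}
up to the same slight enlargement of $C_\ell$ inside $M_1$ (equivalently, possibly enlarging $M$ and shrinking $T$).

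The punchline is then purely algebraic and requires no Gronwall argument: the right-hand side above is $2M_2$ times the top-level $L^2$-in-time contribution $\tnorm{Z}_{2,t,\ell,\delta}^2 + \tnorm{V}_{2,t,\ell+1,\delta}^2$, whereas the energy functionals on the left already carry this same contribution with coefficient $4M_2$ (the dissipative term $2 \dot\delta \norm{\cdot}^2$ integrated against $\dot\delta = -M$, cf.\ Lemma~\ref{lem:prop_anal_norm_diff}). Hence the right-hand side is bounded by half of the left-hand side, so that $\mathcal{E}_{2M_2,\ell-\frac12,\delta}(Z)(t) + \mathcal{E}_{2M_2,\ell+\frac12,\delta}(V)(t) \le 0$ for every $t \in [0,T]$; since both energies are nonnegative and start from zero data, this forces $Z \equiv 0$ and $V \equiv 0$, i.e.\ the two solutions coincide. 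I expect no genuine obstacle here: the estimate is routine once Lemma~\ref{lem:toolbox} is in hand, and the only point demanding care is the coefficient bookkeeping in the absorption step — ensuring, exactly as in Lemma~\ref{lem:unif_sec_est_scheme}, that $M_2$ has been chosen large enough for the variable-coefficient cross terms to be dominated by the dissipative part of the energy at the working regularity $\ell - \frac12$.
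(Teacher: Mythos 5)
Your overall strategy is the paper's: form the difference of the two solutions, estimate it with Lemma \ref{lem:toolbox} at the level $m = \ell - \frac{1}{2}$, and reuse the constants $M_1, M_2, M$ from the existence part. But there is one genuine gap, and it is precisely the step the paper's proof spends its first half on. Your absorption argument needs the coefficient norms of \emph{both} solutions to be controlled by the $\omega_\textnormal{in}$-based quantities that went into the definition of $M_1$, $M_2$ and $M$; you justify this by writing that $\norm{\zeta^\varepsilon(t)}_{\ell,\delta}^2 + \norm{v^\varepsilon(t)}_{\ell+1,\delta}^2 \leq 2\omega_\textnormal{in}$ holds ``and likewise for the tilde solution, from the existence part.'' This is circular: the existence part only bounds the solution constructed by the iterative scheme, and since uniqueness is exactly what is being proved, you cannot identify an arbitrary second solution in the stated class with the constructed one. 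A second solution is a priori only known to have \emph{finite} norms in $L^\infty((0,T),\mathcal{H}_\delta^\ell \times \mathcal{H}_\delta^{\ell+1}) \cap L^2((0,T),\mathcal{H}_\delta^{\ell+\frac12}\times\mathcal{H}_\delta^{\ell+\frac32})$, possibly much larger than $2\omega_\textnormal{in}$, in which case the fixed $M$ (calibrated to $\omega_\textnormal{in}$) no longer absorbs the cross terms and your claimed analogue of \eqref{eq:est_conv_scheme_1} does not follow. The paper closes this gap before touching the difference system: it runs the energy estimate of Section \ref{subsec:wellpos_scheme} directly on the putative second solution, uses the continuity in time of $t \mapsto \norm{\tilde{\zeta}^\varepsilon(t)}_{\ell,\delta}^2 + \norm{\tilde{v}^\varepsilon(t)}_{\ell+1,\delta}^2$ (itself a consequence of Lemma \ref{lem:toolbox}), and a bootstrap to obtain the a priori bound \eqref{eq:est_uniqueness}. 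Some such step — the paper's bootstrap, or alternatively shrinking the interval and enlarging $M$ in terms of the (finite but unknown) norm of the second solution and then propagating — is unavoidable, and your proposal is incomplete without it.

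Apart from this, your argument is sound, and your endgame is in fact slightly different from (and cleaner than) the paper's. Once the a priori bound is secured, all difference terms can indeed be arranged with the difference $(Z,V)$ in $L^2$-in-time norms (the terms $\nabla \divg Z$ are skew-adjoint, hence the $\theta_1$-term of Lemma \ref{lem:toolbox} costs nothing), so the right-hand side is a multiple of $\tnorm{Z}_{2,t,\ell,\delta}^2 + \tnorm{V}_{2,t,\ell+1,\delta}^2$ that the dissipative part of $\mathcal{E}_{M,\ell-\frac12,\delta} + \mathcal{E}_{M,\ell+\frac12,\delta}$ absorbs outright, forcing $Z \equiv V \equiv 0$ on all of $[0,T]$ at once — no Gronwall, no smallness of $T$. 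The paper instead estimates some difference terms through sup-in-time norms, ends with $\tilde{\Omega}^\varepsilon(t) \leq (2\abs{\lambda} + K^\ell(1+2\varepsilon))\, T\, \tilde{\Omega}^\varepsilon(t)$, and must take $T$ small enough that $(2\abs{\lambda} + 3K^\ell)T < 1$, obtaining local uniqueness and then propagating it to the full interval. Your version avoids that two-step conclusion and mirrors \eqref{eq:est_conv_scheme_1} more faithfully; it just cannot get off the ground without first proving the analogue of \eqref{eq:est_uniqueness} for the second solution.
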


\begin{proof}
Let $(\tilde{\zeta}^\varepsilon, \tilde{v}^\varepsilon)$ be another solution to \eqref{sys:euler_modified_semicla} in the space $L^\infty ((0, T), \mathcal{H}_{\delta}^\ell \times \mathcal{H}_{\delta}^{\ell+1}) \cap \mathcal{C} ([0, T], \mathcal{H}_{\delta}^{\ell - 1} \times \mathcal{H}_{\delta}^{\ell}) \cap  \cap L^2 ((0, T), \mathcal{H}_{\delta}^{\ell+\frac{1}{2}} \times \mathcal{H}_{\delta}^{\ell+\frac{3}{2}})$. In a similar way as in Section \ref{subsec:wellpos_scheme}, defining
\begin{gather*}
    \tilde{\omega}^\varepsilon (t) \coloneqq \norm{\tilde{\zeta}^\varepsilon}_{\infty, t, \ell,\delta}^2 + \norm{\tilde{v}^\varepsilon}_{\infty, t, \ell+1,\delta}^2, \\
    \tilde{\eta}^\varepsilon (t) \coloneqq \norm{\tilde{\zeta}^\varepsilon}_{2, t, \ell+\frac{1}{2},\delta}^2 + \norm{\tilde{v}^\varepsilon}_{2, t, \ell+\frac{3}{2},\delta}^2,
\end{gather*}
there holds for all $t \in [0, T]$
\begin{equation*}
    \mathcal{E}_{M, \ell, \delta} (\tilde{\zeta}^\varepsilon) (t) + \mathcal{E}_{M, \ell + 1, \delta} (\tilde{v}^\varepsilon) (t) \leq \omega_\textnormal{in} + (2 C_{\ell} (\sqrt{\tilde{\omega}^\varepsilon} + \tilde{\omega}^\varepsilon) + 2 \abs{\lambda} + 1 + 2 C_\ell ) \, \tilde{\eta}^\varepsilon.
\end{equation*}
Moreover, also from Lemma \ref{lem:toolbox}, $\norm{\tilde{\zeta}^\varepsilon (t)}_{\ell, \delta}^2 + \norm{\tilde{v}^\varepsilon (t)}_{\ell + 1, \delta}^2$ is continuous in time, and then so is $\tilde{\omega}^\varepsilon (t)$.
Therefore, the same kind of argument as already used along with a bootstrap property yields
\begin{equation} \label{eq:est_uniqueness}
    \mathcal{E}_{\abs{\lambda}, \ell, \delta} (\tilde{\zeta}^\varepsilon) (t) + \mathcal{E}_{\abs{\lambda}, \ell + 1, \delta} (\tilde{v}^\varepsilon) (t) \leq \omega_\textnormal{in}.
\end{equation}
Set now $\tilde{Z}^\varepsilon \coloneqq \tilde{\zeta}^\varepsilon - \zeta^\varepsilon$ and $\tilde{V}^\varepsilon \coloneqq \tilde{v}^\varepsilon - v^\varepsilon$.
Then, we obtain:
\begin{System} \notag
    \partial_t \tilde{V}^\varepsilon + (\tilde{v}^\varepsilon \cdot \nabla) \tilde{V}^\varepsilon + (\tilde{V}^\varepsilon \cdot \nabla) v^\varepsilon + \lambda \, \Re \tilde{Z}^\varepsilon = 0, \\
    \partial_t \tilde{Z}^\varepsilon + \nabla \Bigl( \tilde{V}^\varepsilon \cdot \tilde{\zeta}^\varepsilon \Bigr) + \nabla \Bigl( v^\varepsilon \cdot \tilde{Z}^\varepsilon \Bigr) + \nabla \divg \tilde{V}^\varepsilon = i \frac{\varepsilon}{2} \Bigl( \nabla \divg \tilde{Z}^\varepsilon + 2 \, \nabla ( \tilde{Z}^\varepsilon \cdot \tilde{\zeta}^\varepsilon ) + 2 \, \nabla ( \zeta^\varepsilon \cdot \tilde{Z}^\varepsilon ) \Bigr).
\end{System}
In the same way as in Section \ref{subsec:conv_scheme}, defining
\begin{gather*}
    \tilde{\Omega}^\varepsilon (t) \coloneqq \norm{\tilde{Z}^\varepsilon}_{\infty, t, \ell - \frac{1}{2},\delta}^2 + \norm{\tilde{V}^\varepsilon}_{\infty, t, \ell + \frac{1}{2},\delta}^2, \\
    \tilde{N}^\varepsilon (t) \coloneqq \norm{\tilde{Z}^\varepsilon}_{2, t, \ell,\delta}^2 + \norm{\tilde{V}^\varepsilon}_{2, t, \ell + 1,\delta}^2,
\end{gather*}
Lemma \ref{lem:toolbox} with \eqref{eq:tool1}-\eqref{eq:tool3} and \eqref{eq:tool5}-\eqref{eq:tool6} yields for all $t \in [0, T]$
\begin{equation*}
    \mathcal{E}_{M, \ell, \delta} (\tilde{Z}^\varepsilon) (t) + \mathcal{E}_{M, \ell + 1, \delta} (\tilde{V}^\varepsilon) (t) \leq ( 2 K^\ell \sqrt{\omega_\textnormal{in}} + K^\ell \omega_\textnormal{in} (2 + \varepsilon) + 1 + 2 \abs{\lambda} )
        \tilde{N}^\varepsilon + (2 \abs{\lambda} + K^\ell (1 + 2 \varepsilon)) \, T \, \tilde{\Omega}^\varepsilon.
\end{equation*}
From the definition of $M$, we get
\begin{equation*}
    \tilde{\Omega}^\varepsilon (t) \leq (2 \abs{\lambda} + K^\ell (1 + 2 \varepsilon)) \, T \, \tilde{\Omega}^\varepsilon (t),
\end{equation*}
which gives the conclusion as soon as $T$ is small enough so that $(2 \abs{\lambda} + 3 K^\ell ) \, T < 1$. This gives local uniqueness, which is sufficient to prove it even if $T$ is larger.
\end{proof}

\begin{rem}
    In particular, the previous computation \eqref{eq:est_uniqueness} and the uniqueness property give another estimate, slightly better than that of Lemma \ref{lem:est_scheme} for the $L^\infty( [0, T], \mathcal{H}^\ell_\delta \times \mathcal{H}^{\ell + 1}_\delta)$ norm, given in the next lemma.
\end{rem}

\begin{lem} \label{lem:est_scheme_2}
    For all $\varepsilon \in [0,1]$, there holds for all $t \in [0, T]$
    \begin{equation*}
        \mathcal{E}_{\abs{\lambda}, \ell, \delta} (\tilde{\zeta}^\varepsilon) (t) + \mathcal{E}_{\abs{\lambda}, \ell + 1, \delta} (\tilde{v}^\varepsilon) (t) \leq \omega_\textnormal{in}.
    \end{equation*}
\end{lem}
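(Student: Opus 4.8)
The plan is to recognize that the asserted bound is precisely inequality \eqref{eq:est_uniqueness}, which was already produced while proving uniqueness; hence the task reduces to spelling out the bootstrap that was only alluded to there. First I would record the preliminary differential estimate coming from Lemma \ref{lem:toolbox} applied with $m = \ell$ (using the relevant parts of the toolbox with $\theta_4 = \lambda$, $\theta_5 = 1$, $\theta_6 = i\varepsilon/2$), namely that for all $\varepsilon \in [0,1]$ and $t \in [0,T]$,
\begin{equation*}
    \mathcal{E}_{M, \ell, \delta} (\tilde{\zeta}^\varepsilon) (t) + \mathcal{E}_{M, \ell + 1, \delta} (\tilde{v}^\varepsilon) (t) \leq \omega_\textnormal{in} + \bigl( 2 C_{\ell} ( \sqrt{\tilde{\omega}^\varepsilon} + \tilde{\omega}^\varepsilon ) + 2 \abs{\lambda} + 1 + 2 C_\ell \bigr) \, \tilde{\eta}^\varepsilon ,
\end{equation*}
together with the fact, also supplied by Lemma \ref{lem:toolbox}, that $t \mapsto \norm{\tilde{\zeta}^\varepsilon(t)}_{\ell,\delta}^2 + \norm{\tilde{v}^\varepsilon(t)}_{\ell+1,\delta}^2$, and hence $\tilde{\omega}^\varepsilon$, is continuous on $[0,T]$. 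Since $\mathcal{E}_{M,\ell,\delta}(\tilde{\zeta}^\varepsilon)(t) + \mathcal{E}_{M,\ell+1,\delta}(\tilde{v}^\varepsilon)(t) = \norm{\tilde{\zeta}^\varepsilon(t)}_{\ell,\delta}^2 + \norm{\tilde{v}^\varepsilon(t)}_{\ell+1,\delta}^2 + 2M\,\tilde{\eta}^\varepsilon(t)$, the whole argument amounts to absorbing the $\tilde{\eta}^\varepsilon$ on the right into the dissipative term on the left, once $\tilde{\omega}^\varepsilon$ is controlled.

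Next I would run a standard continuity (bootstrap) argument. Because $\tilde{\zeta}^\varepsilon(0) = \nabla \psi_\textnormal{in}^\varepsilon$ and $\tilde{v}^\varepsilon(0) = \nabla \phi_\textnormal{in}^\varepsilon$, Assumption \ref{ass:bound} yields $\tilde{\omega}^\varepsilon(0) \leq \omega_\textnormal{in} < 2\omega_\textnormal{in}$, so by continuity the set of times where $\tilde{\omega}^\varepsilon \leq 2\omega_\textnormal{in}$ contains a nontrivial interval; let $T^*$ be the supremum of times up to which this threshold holds. On $[0,T^*]$ I bound the bracketed coefficient using $\tilde{\omega}^\varepsilon \leq 2\omega_\textnormal{in}$ and recognize, from the definitions in Section \ref{subsec:wellpos_scheme}, that $2C_\ell(\sqrt{2\omega_\textnormal{in}} + 2\omega_\textnormal{in}) + 2\abs{\lambda} + 1 = 2M_1$ while $2C_\ell \leq 2M_2$; hence the coefficient of $\tilde{\eta}^\varepsilon$ is at most $2M_1 + 2M_2$. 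Since $2M = 2M_1 + 4M_2$, the left-hand dissipation $2M\,\tilde{\eta}^\varepsilon$ absorbs this and leaves the surplus $2M_2\,\tilde{\eta}^\varepsilon$, giving
\begin{equation*}
    \norm{\tilde{\zeta}^\varepsilon(t)}_{\ell,\delta}^2 + \norm{\tilde{v}^\varepsilon(t)}_{\ell+1,\delta}^2 + 2M_2\,\tilde{\eta}^\varepsilon(t) \leq \omega_\textnormal{in} \qquad \text{on } [0,T^*] .
\end{equation*}

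The last step closes the loop and fixes the final constant. The display above forces $\tilde{\omega}^\varepsilon(t) \leq \omega_\textnormal{in}$ on $[0,T^*]$, which is \emph{strictly} below the threshold $2\omega_\textnormal{in}$; by continuity of $\tilde{\omega}^\varepsilon$ this rules out $T^* < T$ (otherwise the threshold inequality would extend past $T^*$, contradicting maximality), so $T^* = T$ and the bound holds on all of $[0,T]$. Finally, since $2\abs{\lambda} \leq 2M_2 = 2(\abs{\lambda} + C_\ell)$, merely weakening the dissipative coefficient gives
\begin{equation*}
    \mathcal{E}_{\abs{\lambda},\ell,\delta}(\tilde{\zeta}^\varepsilon)(t) + \mathcal{E}_{\abs{\lambda},\ell+1,\delta}(\tilde{v}^\varepsilon)(t) = \norm{\tilde{\zeta}^\varepsilon(t)}_{\ell,\delta}^2 + \norm{\tilde{v}^\varepsilon(t)}_{\ell+1,\delta}^2 + 2\abs{\lambda}\,\tilde{\eta}^\varepsilon(t) \leq \omega_\textnormal{in} ,
\end{equation*}
which is the assertion, uniformly in $\varepsilon \in [0,1]$ since all of $C_\ell$, $M_1$, $M_2$, $M$ are $\varepsilon$-independent. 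The only genuinely delicate point is the continuation argument: one must know a priori that the solution exists on all of $[0,T]$ and that its energy is continuous there (both guaranteed by the regularity hypothesis of the lemma and by Lemma \ref{lem:toolbox}), so that the a priori threshold $\tilde{\omega}^\varepsilon \leq 2\omega_\textnormal{in}$ can be propagated up to $T$ and then self-improved to $\omega_\textnormal{in}$; everything else is the bookkeeping already encoded in the definitions of $M_1$, $M_2$ and $M$.
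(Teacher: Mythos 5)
Your proposal is correct and takes essentially the same route as the paper: the paper proves this lemma precisely by combining the differential estimate from the uniqueness section with a bootstrap that it leaves implicit (``the same kind of argument as already used along with a bootstrap property''), and you have simply spelled that bootstrap out. Your bookkeeping of the constants ($2M_1 = 2C_\ell(\sqrt{2\omega_\textnormal{in}}+2\omega_\textnormal{in})+2\abs{\lambda}+1$, $2C_\ell \leq 2M_2$, $2M = 2M_1 + 4M_2$, surplus $2M_2$), the continuity-based continuation to $T^* = T$, and the final weakening $2\abs{\lambda} \leq 2M_2$ all match the paper's intended argument.
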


\section{Semiclassical limit} \label{sec:semicla}

We now address the semiclassical limit $\varepsilon \rightarrow 0$ in $(\zeta^\varepsilon, v^\varepsilon)$ variables, \textit{i.e.} the proof of the second part of Theorem \ref{th:main_th}. For this, we set $Z^\varepsilon \coloneqq \zeta^\varepsilon - \zeta^0$ and $V^\varepsilon \coloneqq v^\varepsilon - v^0$. Using \eqref{eq:conv_term_v_k}, they satisfy 
\begin{System} \label{sys:semicla1}
    \partial_t V^\varepsilon + \frac{1}{2} \nabla (V^\varepsilon \cdot v^\varepsilon) + \frac{1}{2} \nabla (v^0 \cdot V^\varepsilon) + \lambda \, \Re Z^\varepsilon = 0, \qquad V^\varepsilon (0) = \nabla v^\varepsilon_\textnormal{in} - \nabla v^0_\textnormal{in} \\
    \partial_t Z^\varepsilon + \nabla \Bigl( V^\varepsilon \cdot \zeta^\varepsilon \Bigr) + \nabla \Bigl( v^0 \cdot Z^\varepsilon \Bigr) + \nabla \divg V^\varepsilon = i \frac{\varepsilon}{2} \Bigl( \nabla \divg \zeta^\varepsilon + 2 \, \nabla ( \zeta^\varepsilon \cdot \zeta^\varepsilon ) \Bigr), \quad Z^\varepsilon (0) = \nabla \psi^\varepsilon_\textnormal{in} - \nabla \psi^0_\textnormal{in}.
\end{System}

\subsection{First case}

Set
\begin{gather*}
    \Omega^\varepsilon \coloneqq \norm{Z^\varepsilon}_{\infty, t, \ell - \frac{1}{2},\delta}^2 + \norm{V^\varepsilon}_{\infty, t, \ell + \frac{1}{2},\delta}^2.
\end{gather*}
By applying Lemma \ref{lem:toolbox} with $m = \ell - \frac{1}{2}$, $\theta_1 = 0$, $\theta_3 = 2 \theta_2 = i \varepsilon$, $\tilde{g}_1 = \tilde{g}_2 = \zeta^\varepsilon$ and \eqref{eq:tool3}-\eqref{eq:tool6}, using Lemma \ref{lem:est_scheme_2} and the following computations:
\begin{gather*}
    2 \, \norm{V^\varepsilon}_{2, t, \ell + 1, \delta} \norm{Z^\varepsilon}_{2, t, \ell, \delta} \leq \norm{V^\varepsilon}_{2, t, \ell + 1, \delta}^2 + \norm{Z^\varepsilon}_{2, t, \ell, \delta}^2, \\
    \norm{Z^\varepsilon}_{2, t, \ell, \delta} \norm{V^\varepsilon}_{2, t, \ell, \delta} \norm{\zeta^\varepsilon}_{\infty, t, \ell, \delta} \leq \frac{T}{2} \Omega^\varepsilon + \frac{1}{2} \omega_\textnormal{in} \norm{Z^\varepsilon}_{\infty, t, \ell, \delta}^2, \\
    \norm{v^0}_{\infty, t, \ell, \delta} \norm{Z^\varepsilon}_{2, t, \ell, \delta}^2 \leq \sqrt{\omega_\textnormal{in}} \norm{Z^\varepsilon}_{2, t, \ell, \delta}^2, \\
    \norm{Z^\varepsilon}_{2, t, \ell + \frac{1}{2}, \delta} \leq \norm{\zeta^\varepsilon}_{2, t, \ell + \frac{1}{2}, \delta} + \norm{\zeta^0}_{2, t, \ell + \frac{1}{2}, \delta} \leq \sqrt{\frac{2 \omega_\textnormal{in}}{\abs{\lambda}}},
\end{gather*}
we get for all $\varepsilon \leq 1$ and $t \in [0, T]$:
\begin{equation} \label{eq:semicla_est_compl}
    \mathcal{E}_{2 M_2, \ell - \frac{1}{2}, \delta} (Z^\varepsilon) (t) + \mathcal{E}_{2 M_2, \ell + \frac{1}{2}, \delta} (V^\varepsilon) (t) \leq (D^\varepsilon_{\ell - \frac{1}{2}})^2 + K^\ell T \Omega^\varepsilon (t) + \varepsilon \Bigl( \sqrt{2} K^\ell \omega_\textnormal{in}^\frac{3}{2} + \frac{\omega_\textnormal{in}}{\abs{\lambda}} \Bigr).
\end{equation}
Thus, up to taking for instance $T < \frac{K^\ell}{2}$, we obtain
\begin{equation*}
    \mathcal{E}_{M_2, \ell - \frac{1}{2}, \delta} (Z^\varepsilon) (t) + \mathcal{E}_{M_2, \ell + \frac{1}{2}, \delta} (V^\varepsilon) (t) \leq C \Bigl( (D^\varepsilon_{\ell - \frac{1}{2}})^2 + \varepsilon \Bigr).
\end{equation*}
This gives the conclusion for the first case.

\subsection{Case $\ell > \frac{d+1}{2}$}

Set now
\begin{gather*}
    \Omega^\varepsilon (t) \coloneqq \norm{Z^\varepsilon}_{\infty, t, \ell - 1,\delta}^2 + \norm{V^\varepsilon}_{\infty, t, \ell,\delta}^2.
\end{gather*}
Here, we use Lemma \ref{lem:toolbox} in a similar way as previously. However, we treat the term $i \frac{\varepsilon}{2} \nabla \divg \zeta^\varepsilon$ with \eqref{eq:tool6} instead of with $\tilde{g}_1$, and the term $i \varepsilon \, \nabla ( \zeta^\varepsilon \cdot \zeta^\varepsilon )$ with $\tilde{g}_2$ instead of with \eqref{eq:tool7}. Then, we also use the following computations:
\begin{gather*}
    \varepsilon \norm{Z^\varepsilon}_{2, t, \ell - \frac{1}{2}, \delta} \norm{\zeta^\varepsilon}_{2, t, \ell + \frac{1}{2}, \delta} \leq C_\ell \, \omega_\textnormal{in} \, \norm{Z^\varepsilon}_{2, t, \ell - \frac{1}{2}, \delta}^2 + \varepsilon^2 \frac{1}{8 \abs{\lambda} C_\ell}, \\
    \varepsilon \norm{Z^\varepsilon}_{2, t, \ell - 1, \delta} \norm{\zeta^\varepsilon}_{\infty, t, \ell, \delta}^2 \leq \frac{1}{2} \norm{Z^\varepsilon}_{\infty, t, \ell - 1, \delta}^2 + \frac{\varepsilon^2}{2} T \omega_\textnormal{in}^4.
\end{gather*}
Thus, we get
\begin{equation*}
    \mathcal{E}_{2 M_2, \ell - 1, \delta} (Z^\varepsilon) (t) + \mathcal{E}_{2 M_2, \ell, \delta} (V^\varepsilon) (t) \leq (D_{\ell - 1}^\varepsilon)^2 + \frac{1}{2} \Omega^\varepsilon (t) + \frac{\varepsilon^2}{4} \Bigl( T^2 \, K^\ell \omega_\textnormal{in}^4 + \frac{1}{2 \abs{\lambda} \, C_\ell} \Bigr),
\end{equation*}
which gives the conclusion for the second statement of the second part of Theorem \ref{th:main_th}.

\begin{rem}
    The fact that we cannot recover the $O (\varepsilon^2)$ in the first part comes from the term $\nabla \divg \zeta^\varepsilon$.
    Indeed, the highest regularity for which we have an estimate for $\zeta^\varepsilon$ is the $\mathcal{H}_\delta^{\ell + \frac{1}{2}}$, which is $L^2$ in time.
    Therefore, when one wants to estimate $\abs{\langle Z^\varepsilon, \nabla \divg \zeta^\varepsilon \rangle_{\ell - \frac{1}{2}, \delta}}$ for Lemma \ref{lem:prop_anal_norm_diff} (or Lemma \ref{lem:toolbox}), since we cannot go further $\ell + \frac{1}{2}$ in the norm of $\zeta^\varepsilon$, there must be at least $\ell + \frac{1}{2}$ for the norm of $Z^\varepsilon$, which is not very optimal for this estimate: we would want at most $\ell$.
    This problem does not occur for the second case because we estimate the $(\ell - 1, \delta)$ scalar product: it allows an extra notch backwards for the regularity of $Z^\varepsilon$, which is sufficient for falling into a better framework for our estimates.
\end{rem}

\section{Properties on $\psi^\varepsilon$ and $\phi^\varepsilon$} \label{sec:cor_proof}

In this section, we prove the second and third parts of Corollary \ref{cor:cauchy}.

\subsection{Behavior at infinity}

\subsubsection{Analyticity of $\partial_t \psi^\varepsilon$}

The first part of this proof is a result of analyticity of $\partial_t \psi^\varepsilon$.

\begin{lem} \label{lem:anal_dt_psi}
    There holds $\partial_t \psi^\varepsilon \in L^2_T \mathcal{H}^{\ell - \frac{1}{2}}_\delta$.
\end{lem}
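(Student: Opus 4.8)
The plan is to read off an explicit expression for $\partial_t \psi^\varepsilon$ and then estimate it term by term using the regularity provided by Theorem \ref{th:main_th}. Since $\psi^\varepsilon$ is defined through the integral relation \eqref{eq:rel_psi_zeta} and we already know that $\nabla \psi^\varepsilon = \zeta^\varepsilon$, differentiating \eqref{eq:rel_psi_zeta} in time yields the pointwise-in-time identity
\begin{equation*}
    \partial_t \psi^\varepsilon = - v^\varepsilon \cdot \zeta^\varepsilon - \divg v^\varepsilon + i \frac{\varepsilon}{2} \divg \zeta^\varepsilon + i \varepsilon \, \zeta^\varepsilon \cdot \zeta^\varepsilon ,
\end{equation*}
which is nothing but the second equation of \eqref{sys:euler_riemann_semicla} rewritten with $\zeta^\varepsilon = \nabla \psi^\varepsilon$. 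Thus it suffices to show that each of these four terms belongs to $L^2_T \mathcal{H}^{\ell-\frac{1}{2}}_\delta$. Recall from Theorem \ref{th:main_th} that $\zeta^\varepsilon \in L^\infty_T \mathcal{H}^\ell_\delta \cap L^2_T \mathcal{H}^{\ell+\frac{1}{2}}_\delta$ and $v^\varepsilon \in L^\infty_T \mathcal{H}^{\ell+1}_\delta \cap L^2_T \mathcal{H}^{\ell+\frac{3}{2}}_\delta$.

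I would then bound the terms as follows. For the linear terms, the derivative estimate (part 1 of Lemma \ref{lem:anal_prop}) gives $\divg v^\varepsilon \in L^\infty_T \mathcal{H}^\ell_\delta \subset L^2_T \mathcal{H}^{\ell-\frac{1}{2}}_\delta$ on the finite interval $[0,T]$, and $\divg \zeta^\varepsilon \in L^2_T \mathcal{H}^{\ell-\frac{1}{2}}_\delta$ directly (or via \eqref{eq:tool10} with $\theta_7 = i\frac{\varepsilon}{2}$). For the bilinear term $v^\varepsilon \cdot \zeta^\varepsilon$, I would apply the product estimate \eqref{eq:tool9} with $m = \ell > \frac{d-1}{2}$, placing $v^\varepsilon$ in $L^\infty_T \mathcal{H}^{\ell+\frac{1}{2}}_\delta$ and $\zeta^\varepsilon$ in $L^2_T \mathcal{H}^{\ell+\frac{1}{2}}_\delta$, which lands in $L^2_T \mathcal{H}^{\ell-\frac{1}{2}}_\delta$. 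Finally, for $\zeta^\varepsilon \cdot \zeta^\varepsilon$, the tame product estimate (part \ref{part3} of Lemma \ref{lem:anal_prop}) with $m = \ell > \frac{d}{2}$ gives, at each fixed time, $\norm{\zeta^\varepsilon \cdot \zeta^\varepsilon}_{\ell-\frac{1}{2},\delta} \leq K^{\ell-\frac{1}{2},\ell}\norm{\zeta^\varepsilon}_{\ell,\delta}^2$, so that $\zeta^\varepsilon \cdot \zeta^\varepsilon \in L^\infty_T \mathcal{H}^{\ell-\frac{1}{2}}_\delta \subset L^2_T \mathcal{H}^{\ell-\frac{1}{2}}_\delta$. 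Summing the four contributions yields $\partial_t \psi^\varepsilon \in L^2_T \mathcal{H}^{\ell-\frac{1}{2}}_\delta$.

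The computation is essentially routine given the toolbox, so there is no genuine analytic obstacle; the only point requiring care is the choice of the index $\ell - \frac{1}{2}$. This exponent is dictated precisely by the bilinear term $v^\varepsilon \cdot \zeta^\varepsilon$: the factor $\zeta^\varepsilon$ is only controlled in $L^2$ in time at the top regularity $\mathcal{H}^{\ell+\frac{1}{2}}_\delta$, and the product estimate loses one half-derivative, so one cannot hope to place $\partial_t \psi^\varepsilon$ in any space better than $L^2_T \mathcal{H}^{\ell-\frac{1}{2}}_\delta$ with these bounds. I would also note in passing that the factor $\varepsilon$ in front of the last two terms is irrelevant for mere membership in the space (it only matters for uniformity in $\varepsilon$, which is automatic here since $\varepsilon \in [0,1]$).
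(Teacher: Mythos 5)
Your proof is correct and follows essentially the same route as the paper: both read off $\partial_t \psi^\varepsilon$ from \eqref{eq:rel_psi_zeta} (equivalently, the second equation of \eqref{sys:euler_riemann_semicla}) and estimate the four terms using the product laws of Lemma \ref{lem:anal_prop} and the uniform bounds of Lemma \ref{lem:est_scheme_2}, with $\divg \zeta^\varepsilon$ handled in $L^2$ in time. One small correction to your closing remark: the index $\ell - \frac{1}{2}$ is not forced by the bilinear term $v^\varepsilon \cdot \zeta^\varepsilon$ --- that term actually sits in the better space $L^\infty_T \mathcal{H}^{\ell}_\delta$ via the product estimate at level $\ell$ (as the paper does, since both factors are bounded in $L^\infty_T \mathcal{H}^{\ell}_\delta$ and $\ell > \frac{d}{2}$); the genuine bottleneck is the linear term $i \frac{\varepsilon}{2} \divg \zeta^\varepsilon$, for which $\zeta^\varepsilon$ is only controlled at top regularity in $L^2_T \mathcal{H}^{\ell + \frac{1}{2}}_\delta$, so that for $\varepsilon = 0$ one would in fact get $\partial_t \psi^0 \in L^\infty_T \mathcal{H}^{\ell}_\delta$.
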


\begin{proof}
    We know from \eqref{eq:rel_v_phi_3} that
    \begin{equation*}
        \partial_t \psi^\varepsilon + v^\varepsilon \cdot \zeta^\varepsilon + \divg v^\varepsilon = i \frac{\varepsilon}{2} \Bigl( \divg \zeta^\varepsilon + 2 \, \zeta^\varepsilon \cdot \zeta^\varepsilon \Bigr).
    \end{equation*}
    Moreover, using Lemma \ref{lem:est_scheme_2}, there holds
    \begin{equation*}
        \norm{v^\varepsilon \cdot \zeta^\varepsilon}_{\ell, \delta} \leq K^\ell \norm{v^\varepsilon}_{\ell, \delta} \norm{\zeta^\varepsilon}_{\ell, \delta} \leq K^\ell \omega_\textnormal{in},
    \end{equation*}
    \begin{equation*}
        \norm{\divg v^\varepsilon}_{\ell, \delta} \leq \norm{v^\varepsilon}_{\ell + 1, \delta} \leq \sqrt{\omega_\textnormal{in}},
    \end{equation*}
    \begin{equation*}
        \norm{\zeta^\varepsilon \cdot \zeta^\varepsilon}_{\ell, \delta} \leq K^\ell \norm{\zeta^\varepsilon}_{\ell, \delta}^2 \leq K^\ell \omega_\textnormal{in},
    \end{equation*}
    \begin{equation*}
        \norm{\divg \zeta^\varepsilon}_{\ell - \frac{1}{2}} \leq \norm{\zeta^\varepsilon}_{\ell + \frac{1}{2}, \delta}.
    \end{equation*}
    The latter is $L^2$ in time (in $(0, T)$) by using also Lemma \ref{lem:est_scheme_2}. Therefore, the conclusion easily follows.
\end{proof}

\subsubsection{$\psi^\varepsilon$ case} \label{subsec:psi_case}

Now, we prove the case of $\psi^\varepsilon$.
Setting $\Psi^\varepsilon \coloneqq \psi^\varepsilon - \psi_\textnormal{in}^\varepsilon$, Lemma \ref{lem:anal_dt_psi} (along with the facts that $\Psi^\varepsilon (0) = 0$ and $\partial_t \Psi^\varepsilon = \partial_t \psi^\varepsilon$) yields
\begin{equation*}
    \Psi^\varepsilon \in H^1 ((0, T), \mathcal{H}^{\ell - \frac{1}{2}}_\delta).
\end{equation*}
In particular, $\psi^\varepsilon - \psi_\textnormal{in}^\varepsilon \in \mathcal{C} ([0, T], \mathcal{H}^{\ell - \frac{1}{2}}_\delta)$.
But we also have $\nabla \psi^\varepsilon \in \mathcal{C} ([0, T], \mathcal{H}^{\ell - \frac{1}{2}}_\delta)$, and it is of course also the same for $\nabla \psi_\textnormal{in}^\varepsilon$.
Therefore, by \eqref{eq:anal_deriv}, we obtain
\begin{equation*}
    \psi^\varepsilon - \psi_\textnormal{in}^\varepsilon \in \mathcal{C}_T \mathcal{H}^{\ell + \frac{1}{2}}_\delta.
\end{equation*}
Moreover, Lemma \ref{lem:toolbox} gives for all $t \in [0, T]$
\begin{equation*}
    \mathcal{E}_{M, \ell, \delta} (\Psi^\varepsilon) (t) \leq \norm{\Psi^\varepsilon}_{2, t, \ell + \frac{1}{2}, \delta} \norm{\partial_t \Psi^\varepsilon}_{2, t, \ell - \frac{1}{2}, \delta}.
\end{equation*}
Applying Lemma \ref{lem:anal_dt_psi} once again yields
\begin{equation*}
    \Psi^\varepsilon \in L^\infty_T \mathcal{H}^{\ell + 1}_{\delta} \cap L^2_T \mathcal{H}^{\ell + \frac{3}{2}}_{\delta}.
\end{equation*}
Thus, we get \eqref{eq:1st_point_main_cor}.

\subsubsection{$\phi^\varepsilon$ case}

In a similar way, we now prove an analyticity result for $\partial_t \Phi^\varepsilon$ where
\begin{equation*}
    \Phi^\varepsilon \coloneqq \phi^\varepsilon - \phi_\textnormal{in}^\varepsilon - \lambda t \psi_\textnormal{in}^\varepsilon.
\end{equation*}

\begin{lem} \label{lem:anal_dt_phi}
    There holds $\partial_t \Phi^\varepsilon \in L^\infty_T \mathcal{H}^{\ell + 1}_{\delta} \cap L^2_T \mathcal{H}^{\ell + \frac{3}{2}}_{\delta}$.
\end{lem}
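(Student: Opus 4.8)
The idea is that, although neither $\phi^\varepsilon$ nor $\psi^\varepsilon$ need belong to any $L^2$-based analytic space $\mathcal{H}_\delta^s$ (both may fail to decay, or even grow, at infinity), the entire non-analytic content is carried by the fixed datum $\psi_\textnormal{in}^\varepsilon$, and the corrective term $\lambda t\,\psi_\textnormal{in}^\varepsilon$ in the definition of $\Phi^\varepsilon$ is tailored precisely to remove it. Concretely, I would differentiate the integral relation \eqref{eq:rel_v_phi_3} in time to obtain
\begin{equation*}
    \partial_t \phi^\varepsilon = - \frac{1}{2} \abs{v^\varepsilon}^2 - \lambda \Re \psi^\varepsilon .
\end{equation*}
Then I write $\psi^\varepsilon = \psi_\textnormal{in}^\varepsilon + \Psi^\varepsilon$ with $\Psi^\varepsilon \coloneqq \psi^\varepsilon - \psi_\textnormal{in}^\varepsilon$, recalling that $\psi_\textnormal{in}^\varepsilon$ is real-valued, so that the time-independent non-analytic term $\lambda \Re \psi_\textnormal{in}^\varepsilon = \lambda \psi_\textnormal{in}^\varepsilon$ is exactly compensated by the contribution of the corrector upon forming $\partial_t \Phi^\varepsilon$, leaving
\begin{equation*}
    \partial_t \Phi^\varepsilon = - \frac{1}{2} \abs{v^\varepsilon}^2 - \lambda \Re \Psi^\varepsilon .
\end{equation*}
Both remaining terms involve only analytic quantities: $v^\varepsilon$, whose regularity is provided by the Cauchy theory, and $\Psi^\varepsilon$, whose analyticity has just been established in Section \ref{subsec:psi_case}.

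It then remains to estimate the two terms in $L^\infty_T \mathcal{H}^{\ell+1}_\delta \cap L^2_T \mathcal{H}^{\ell+\frac{3}{2}}_\delta$. For the quadratic term, I would apply the tame product estimate of Lemma \ref{lem:anal_prop} (part \ref{part3}) with a fixed $m \in (\frac{d}{2}, \ell+1]$: for $s \in \{\ell+1,\, \ell+\frac{3}{2}\}$,
\begin{equation*}
    \norm{v^\varepsilon \cdot v^\varepsilon}_{s, \delta} \leq K^{s, m} \norm{v^\varepsilon}_{m, \delta} \norm{v^\varepsilon}_{s, \delta} \leq K^{s, m} \norm{v^\varepsilon}_{\ell + 1, \delta} \norm{v^\varepsilon}_{s, \delta} .
\end{equation*}
Since $\norm{v^\varepsilon}_{\ell+1,\delta}$ is bounded uniformly in time and $v^\varepsilon \in L^2_T \mathcal{H}^{\ell+\frac{3}{2}}_\delta$, both by Lemma \ref{lem:est_scheme_2}, this gives $\abs{v^\varepsilon}^2 \in L^\infty_T \mathcal{H}^{\ell+1}_\delta \cap L^2_T \mathcal{H}^{\ell+\frac{3}{2}}_\delta$. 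For the linear term, the weights $\langle \xi \rangle$ and $e^{\delta \langle \xi \rangle}$ being even yields $\norm{\Re \Psi^\varepsilon}_{s,\delta} \leq \norm{\Psi^\varepsilon}_{s,\delta}$, and $\Psi^\varepsilon \in L^\infty_T \mathcal{H}^{\ell+1}_\delta \cap L^2_T \mathcal{H}^{\ell+\frac{3}{2}}_\delta$ by Section \ref{subsec:psi_case}. Adding the two contributions yields the claim.

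The only delicate point is the first step: because $\phi^\varepsilon$ and $\psi^\varepsilon$ are a priori not in any $\mathcal{H}_\delta^s$, one cannot estimate $\Re \psi^\varepsilon$ directly, and the whole argument hinges on isolating its non-analytic part as the fixed datum $\psi_\textnormal{in}^\varepsilon$ and observing that the corrector $\lambda t\,\psi_\textnormal{in}^\varepsilon$ annihilates it; this is exactly the role of that term and is consistent with the scaling discussion of Remark \ref{rem:sc_eff}. Once the identity $\partial_t \Phi^\varepsilon = - \frac{1}{2} \abs{v^\varepsilon}^2 - \lambda \Re \Psi^\varepsilon$ is reached, the bounds are routine product and real-part estimates. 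Note, moreover, that in contrast with $\partial_t \psi^\varepsilon$ in Lemma \ref{lem:anal_dt_psi}, no divergence of a lowest-regularity unknown appears here — the term $\divg \zeta^\varepsilon$ that limited the regularity in the $\psi^\varepsilon$ equation has no counterpart in $\partial_t \phi^\varepsilon$ — which is why one gains the higher regularity $\mathcal{H}^{\ell+1}_\delta$ in $L^\infty_T$ rather than merely $\mathcal{H}^{\ell-\frac{1}{2}}_\delta$ in $L^2_T$.
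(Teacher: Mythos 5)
Your proposal is correct and takes essentially the same route as the paper: it derives $\partial_t \Phi^\varepsilon = -\frac{1}{2}\abs{v^\varepsilon}^2 - \lambda \Re \Psi^\varepsilon$ from \eqref{eq:rel_v_phi_3}, then concludes via the tame product estimate of Lemma \ref{lem:anal_prop} together with Lemma \ref{lem:est_scheme_2} and the regularity of $\Psi^\varepsilon$ established in Section \ref{subsec:psi_case}, exactly as the paper does. One shared caveat: the cancellation of $\lambda\psi_\textnormal{in}^\varepsilon$ you invoke actually requires the corrector with the opposite sign, $\Phi^\varepsilon = \phi^\varepsilon - \phi_\textnormal{in}^\varepsilon + \lambda t\, \psi_\textnormal{in}^\varepsilon$ (consistent with the invariance $(\psi^\varepsilon + 2\kappa, \phi^\varepsilon - 2\lambda\kappa t)$ of Remark \ref{rem:sc_eff}), and this sign slip is present in the paper's own definition of $\Phi^\varepsilon$ as well, so your identity and estimates coincide with the paper's modulo that common typo.
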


\begin{proof}
    We know that
    \begin{equation*}
        \partial_t \phi^\varepsilon + \frac{1}{2} \abs{v^\varepsilon}^2 + \lambda \Re \psi^\varepsilon = 0.
    \end{equation*}
    Therefore, we have
    \begin{equation*}
        \partial_t \Phi^\varepsilon + \frac{1}{2} \abs{v^\varepsilon}^2 + \lambda \Re \Psi^\varepsilon = 0.
    \end{equation*}
    From the previous result and from the fact that
    \begin{equation*}
        \norm{\abs{v^\varepsilon}^2}_{\ell + \frac{3}{2}, \delta} \leq K^{\ell + \frac{3}{2}, \ell + 1} \norm{v^\varepsilon}_{\ell + \frac{3}{2}, \delta} \norm{v^\varepsilon}_{\ell + 1, \delta} \leq K^{\ell + \frac{3}{2}, \ell + 1} \sqrt{\omega_\textnormal{in}} \norm{v^\varepsilon}_{\ell + \frac{3}{2}, \delta},
    \end{equation*}
    along with Lemma \ref{lem:est_scheme_2}, this yields the conclusion.
\end{proof}

The proof of the second statement of the second part of Corollary \ref{cor:cauchy} is then similar as in Section \ref{subsec:psi_case}.

\subsection{Semiclassical limit}

Now, we prove the results of the semiclassical limit for $\psi^\varepsilon$ and $\phi^\varepsilon$.
Set
\begin{equation*}
    P^\varepsilon = \psi^\varepsilon - \psi^0, \qquad
    Q^\varepsilon = \phi^\varepsilon - \phi^0.
\end{equation*}
Then, there holds
\begin{gather*}
    \partial_t P^\varepsilon + V^\varepsilon \cdot \zeta^\varepsilon + v^0 \cdot Z^\varepsilon + \divg V^\varepsilon = i \frac{\varepsilon}{2} \Bigl( \divg \zeta^\varepsilon + 2 \, \zeta^\varepsilon \cdot \zeta^\varepsilon \Bigr), \\
    \partial_t Q^\varepsilon + \frac{1}{2} V^\varepsilon \cdot v^\varepsilon + \frac{1}{2} v^0 \cdot V^\varepsilon + \lambda \Re P^\varepsilon = 0.
\end{gather*}

\subsubsection{First case}

By applying Lemma \ref{lem:toolbox}, \eqref{eq:tool9} and \eqref{eq:tool10} with $m = \ell - \frac{1}{2}$ and $g = P^\varepsilon$, and using the second part of Theorem \ref{th:main_th} and Lemma \ref{lem:est_scheme_2}, we obtain for all $t \in [0, T]$
\begin{equation*}
    \mathcal{E}_{M, \ell - \frac{1}{2}, \delta} (P^\varepsilon) (t) \leq \norm{P^\varepsilon (0)}_{\ell - \frac{1}{2}, \delta_\textnormal{in}}^2 + C \biggl[ \varepsilon + \Bigl( \varepsilon + (D_{\ell - \frac{1}{2}}^\varepsilon)^2 \Bigr)^\frac{1}{2} \biggr] \norm{P^\varepsilon}_{2, t, \ell, \delta},
\end{equation*}
for some constant $C > 0$. Therefore we get for all $\varepsilon \in [0, 1]$
\begin{equation*}
    \mathcal{E}_{2 M_2, \ell - \frac{1}{2}, \delta} (P^\varepsilon) (t) \leq \norm{P^\varepsilon (0)}_{\ell - \frac{1}{2}, \delta_\textnormal{in}}^2 + C \Bigl( \varepsilon + (D_{\ell - \frac{1}{2}}^\varepsilon)^2 \Bigr).
\end{equation*}
Then, for $Q^\varepsilon$, we apply again Lemma \ref{lem:toolbox} with $f = Q^\varepsilon$, \eqref{eq:tool0} and \eqref{eq:tool3}, so that we also get:
\begin{equation*}
    \mathcal{E}_{M, \ell + \frac{1}{2}, \delta} (Q^\varepsilon) (t) \leq \norm{Q^\varepsilon (0)}_{\ell + \frac{1}{2}, \delta}^2 + C \Bigl( \varepsilon + (D_{\ell - \frac{1}{2}}^\varepsilon)^2  \Bigr)^\frac{1}{2} \norm{Q^\varepsilon (\tau)}_{2, t, \ell + 1, \delta}.
\end{equation*}
Therefore we have in the same way
\begin{equation*}
    \mathcal{E}_{2 M_2, \ell + \frac{1}{2}, \delta} (Q^\varepsilon) (t) \leq \norm{Q^\varepsilon (0)}_{\ell + \frac{1}{2}, \delta}^2 + C \Bigl( \varepsilon + (D_{\ell - \frac{1}{2}}^\varepsilon)^2 \Bigr).
\end{equation*}
Hence, there holds
\begin{equation*}
    \mathcal{E}_{2 M_2, \ell - \frac{1}{2}, \delta} (P^\varepsilon) (t) + \mathcal{E}_{2 M_2, \ell + \frac{1}{2}, \delta} (Q^\varepsilon) (t)
        \leq (\tilde{D}_{\ell - \frac{1}{2}}^\varepsilon)^2 + C ( \varepsilon + (D_{\ell - \frac{1}{2}}^\varepsilon)^2 ).
\end{equation*}
\eqref{eq:anal_deriv} yields
\begin{equation*}
    (\tilde{D}_{\ell - \frac{1}{2}}^\varepsilon)^2 + (D_{\ell - \frac{1}{2}}^\varepsilon)^2 = (\tilde{D}_{\ell + \frac{1}{2}}^\varepsilon)^2,
\end{equation*}
so that
\begin{equation*}
    \mathcal{E}_{2 M_2, \ell - \frac{1}{2}, \delta} (P^\varepsilon) (t) + \mathcal{E}_{2 M_2, \ell + \frac{1}{2}, \delta} (Q^\varepsilon) (t)
        \leq C \Bigl( \varepsilon + (\tilde{D}_{\ell + \frac{1}{2}}^\varepsilon)^2 \Bigr).
\end{equation*}
The conclusion of the first statement then comes from the previous computation and the second part of Theorem \ref{th:main_th} along with \eqref{eq:anal_deriv}.

\subsubsection{Case $\frac{d+1}{2} < \ell$} \label{subsec:semicla2}

For this case, applying again Lemma \ref{lem:toolbox} with $g = P^\varepsilon$ but with $m = \ell - 1 > \frac{d-1}{2}$, \eqref{eq:tool8} and \eqref{eq:tool10}, the estimates give for some $C > 0$:
\begin{equation*}
    \mathcal{E}_{M, \ell - 1, \delta} (P^\varepsilon) (t) \leq \norm{P^\varepsilon (0)}_{\ell - 1, \delta}^2 + C ( \varepsilon^2 + (D_{\ell - 1}^\varepsilon)^2 )^\frac{1}{2} \norm{P^\varepsilon}_{2, \ell - \frac{1}{2}, \delta},
\end{equation*}
so that we obtain
\begin{equation*}
    \mathcal{E}_{2 M_2, \ell - 1, \delta} (P^\varepsilon) (t) \leq (\tilde{D}^\varepsilon_{\ell - 1})^2 + C ( \varepsilon^2 + (D_{\ell - 1}^\varepsilon)^2 ).
\end{equation*}
Then, for $Q^\varepsilon$, we apply Lemma \ref{lem:toolbox}, \eqref{eq:tool0} and \eqref{eq:tool3} with $f = Q^\varepsilon$, which gives:
\begin{equation*}
    \mathcal{E}_{M, \ell, \delta} (Q^\varepsilon) (t) \leq \norm{Q^\varepsilon (0)}_{\ell, \delta}^2 + C ( \varepsilon^2 + (D_{\ell - 1}^\varepsilon)^2 )^\frac{1}{2} \, \norm{Q^\varepsilon}_{2, \ell + \frac{1}{2}, \delta},
\end{equation*}
and the conclusion follows in the same way as in the previous case.

\section{Semiclassical limit of the wave function} \label{sec:semicla_lim}

In this section, we address the semiclassical limit of the wave function $u^\varepsilon = e^{\frac{\psi^\varepsilon}{2} + \frac{\phi^\varepsilon}{\varepsilon}}$ with Theorem \ref{th:semicla_th}. We also prove Lemma \ref{lem:semicla_cond}. But first, we need to address the Cauchy problem of \eqref{sys:euler_modified_semicla2}.

\subsection{Cauchy problem of \eqref{sys:euler_modified_semicla2}}

\begin{theorem}
    Let $\frac{d-1}{2} < m \leq l-1$. For any $(\nabla \psi_{\textnormal{in}, 1}, \nabla \phi_{\textnormal{in}, 1}) \in \mathcal{H}_{\delta_\textnormal{in}}^m \times \mathcal{H}_{\delta_\textnormal{in}}^{m+1}$, there exists a unique $(\zeta_1, v_1) \in (L^\infty_T \mathcal{H}_{\delta}^m \times L^\infty_T \mathcal{H}_{\delta}^{m+1}) \cap (L^2_T \mathcal{H}_{\delta}^{m+\frac{1}{2}} \times L^2_T \mathcal{H}_{\delta}^{m+\frac{3}{2}}) \cap (\mathcal{C}_T \mathcal{H}_{\delta}^{m-\frac{1}{2}} \times \mathcal{C}_T \mathcal{H}_{\delta}^{m+\frac{1}{2}})$ solution to \eqref{sys:euler_modified_semicla2}.
\end{theorem}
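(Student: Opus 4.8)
The plan is to transpose the scheme-and-estimate architecture of Section~\ref{sec:Cauchy} to the present \emph{linear} setting, the decisive simplification being that in \eqref{sys:euler_modified_semicla2} the coefficients $(\zeta^0, v^0)$ and the source $S \coloneqq \frac{i}{2}\bigl(\nabla\divg\zeta^0 + 2\,\nabla(\zeta^0\cdot\zeta^0)\bigr)$ are entirely determined by the $\varepsilon=0$ solution furnished by Theorem~\ref{th:main_th}. From Lemma~\ref{lem:est_scheme_2} (taken at $\varepsilon=0$) one has the uniform controls $v^0 \in L^\infty_T\mathcal{H}^{\ell+1}_\delta$ and $\zeta^0 \in L^\infty_T\mathcal{H}^{\ell}_\delta \cap L^2_T\mathcal{H}^{\ell+\frac12}_\delta$, from which one first records that $S \in L^2_T\mathcal{H}^{m-\frac12}_\delta$: by \eqref{eq:tool6} the term $\nabla\divg\zeta^0$ is bounded in $L^2_T\mathcal{H}^{m-\frac12}_\delta$ by $\tnorm{\zeta^0}_{2,T,m+\frac32,\delta}$, which is finite precisely when $m+\frac32 \leq \ell+\frac12$, that is $m \leq \ell-1$, while $\nabla(\zeta^0\cdot\zeta^0)$ is controlled by \eqref{eq:tool7} at the (higher) regularity of $\zeta^0$. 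This binding term $\nabla\divg\zeta^0$ is exactly what fixes the upper restriction $m \leq \ell-1$; the lower bound $m > \frac{d-1}{2}$ is the threshold under which the product estimates of Lemma~\ref{lem:toolbox} remain valid at regularity $m$.

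For existence I would run the iteration of \eqref{sys:scheme_zeta_v} with the nonlinear coefficients frozen to $(\zeta^0, v^0)$: set $(\zeta_{1,0}, v_{1,0}) = (\nabla\psi_{\textnormal{in},1}, \nabla\phi_{\textnormal{in},1})$ and define $(\zeta_{1,k+1}, v_{1,k+1})$ by first solving the transport equation
\begin{equation*}
    \partial_t v_{1,k+1} + \nabla(v^0\cdot v_{1,k+1}) + \lambda\,\Re\zeta_{1,k} = 0, \qquad v_{1,k+1}(0) = \nabla\phi_{\textnormal{in},1},
\end{equation*}
for $v_{1,k+1}$, and then integrating
\begin{equation*}
    \partial_t \zeta_{1,k+1} + \nabla(v^0\cdot\zeta_{1,k}) + \nabla(v_{1,k}\cdot\zeta^0) + \nabla\divg v_{1,k+1} = S, \qquad \zeta_{1,k+1}(0) = \nabla\psi_{\textnormal{in},1},
\end{equation*}
in exact analogy with the $\varepsilon=0$ case of \eqref{sys:scheme_zeta_v}. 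Applying Lemma~\ref{lem:toolbox} with its parameter equal to the present $m$ and with no dispersive contribution ($\theta_1=\theta_2=\theta_3=0$), I would estimate $v_{1,k+1}$ at level $m+1$ and $\zeta_{1,k+1}$ at level $m$: the product $\nabla(v^0\cdot v_{1,k+1})$ is handled by \eqref{eq:tool4}, the term $\lambda\,\Re\zeta_{1,k}$ by \eqref{eq:tool3}, the convective terms $\nabla(v^0\cdot\zeta_{1,k})$ and $\nabla(v_{1,k}\cdot\zeta^0)$ by \eqref{eq:tool5}, and the coupling $\nabla\divg v_{1,k+1}$ by \eqref{eq:tool6}, all using the $L^\infty_T$ bounds on $(\zeta^0,v^0)$; the fixed source $S$ enters through $\tnorm{S}_{2,T,m-\frac12,\delta}$. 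The current-step contributions $\tnorm{v_{1,k+1}}_{2,t,m+\frac32,\delta}$ and $\tnorm{\zeta_{1,k+1}}_{2,t,m+\frac12,\delta}$ are absorbed by the half-derivative gain built into $\mathcal{E}_{M,\cdot,\delta}$ once $M$ is large (depending only on $\omega_\textnormal{in}$, $\lambda$, $d$, $\ell$) and $T$ small. Because the system is linear, the resulting inequality is affine rather than quadratic in the unknown, so a bootstrap in the spirit of Lemma~\ref{lem:scheme_unif_bound} yields $k$-uniform bounds on $(\zeta_{1,k}, v_{1,k})$ in $(L^\infty_T\mathcal{H}^m_\delta \times L^\infty_T\mathcal{H}^{m+1}_\delta) \cap (L^2_T\mathcal{H}^{m+\frac12}_\delta \times L^2_T\mathcal{H}^{m+\frac32}_\delta)$.

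To pass to the limit, the differences $(Z_{k+1}, V_{k+1}) \coloneqq (\zeta_{1,k+1}-\zeta_{1,k},\, v_{1,k+1}-v_{1,k})$ solve the same linear system with zero data and zero source, forced only by $\lambda\,\Re(\zeta_{1,k}-\zeta_{1,k-1})$ and $\nabla\bigl((v_{1,k}-v_{1,k-1})\cdot\zeta^0\bigr)$. Estimating these in $\mathcal{H}^{m-\frac12}_\delta\times\mathcal{H}^{m+\frac12}_\delta$ with the usual half-derivative loss, exactly as in Section~\ref{subsec:conv_scheme}, produces a contraction of the form $\mathcal{I}_{k+1}\leq\frac12\mathcal{I}_k$ after a further enlargement of $M$ and shrinking of $T$; hence $(\zeta_{1,k}, v_{1,k})_k$ is Cauchy in $(\mathcal{C}_T\mathcal{H}^{m-\frac12}_\delta \times \mathcal{C}_T\mathcal{H}^{m+\frac12}_\delta)\cap(L^2_T\mathcal{H}^m_\delta\times L^2_T\mathcal{H}^{m+1}_\delta)$, its limit $(\zeta_1, v_1)$ solves \eqref{sys:euler_modified_semicla2}, and by lower semicontinuity of the norms it inherits the uniform $L^\infty_T$ and $L^2_T$ bounds, giving the full stated regularity (the $\mathcal{C}_T$-continuity being built into the convergence, as after Lemma~\ref{lem:cauchy_seq_scheme}). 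Uniqueness is then immediate from linearity: the difference of two solutions satisfies the homogeneous system with vanishing data, and the same energy estimate, combined with the short-time smallness and bootstrap argument of Section~\ref{subsec:wellpos_scheme}, forces it to vanish on all of $[0,T]$.

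The only genuine obstacle is, as in the $\varepsilon=0$ analysis, the loss of one derivative in the coupling pair $\nabla\divg v_1 \leftrightarrow \lambda\,\Re\zeta_1$, which here is \emph{not} mitigated by any Schrödinger term (there is no $i\frac{\varepsilon}{2}\nabla\divg\zeta_1$ on the left-hand side of \eqref{sys:euler_modified_semicla2}, unlike in \eqref{sys:euler_modified_semicla}). This is controlled, precisely as before, through the $L^2_T\mathcal{H}^{\cdot+\frac12}_\delta$ gain encoded in $\dot\delta=-M$ and the functional $\mathcal{E}_{M,\cdot,\delta}$, every estimate closing at the regularity dictated by the constraint $m\leq\ell-1$ coming from the source term $\nabla\divg\zeta^0$.
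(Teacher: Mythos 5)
Your proposal is correct and takes essentially the same route as the paper: the identical iterative scheme with coefficients frozen at $(\zeta^0, v^0)$, the same toolbox energy estimates via $\mathcal{E}_{M,\cdot,\delta}$ giving $k$-uniform (affine, by linearity) bounds, a contraction for the differences, and uniqueness by linearity — and your identification of $\nabla\divg\zeta^0$ as the term forcing $m \leq \ell - 1$ is exactly right. The only cosmetic deviations are that you lag $\zeta_{1,k}$ and $v_{1,k}$ in the second equation where the paper uses the current iterates $\zeta_{1,k+1}, v_{1,k+1}$ (so your difference system also carries the forcing term $\nabla\bigl(v^0\cdot(\zeta_{1,k}-\zeta_{1,k-1})\bigr)$, which you omitted from your list but which is handled identically by \eqref{eq:tool5}), and that you run the contraction half a derivative below $m$ whereas the paper closes it at level $m$; neither changes the argument.
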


\begin{proof}
    The proof is rather similar as the Cauchy theory of \eqref{sys:euler_modified_semicla} developed in Section \ref{sec:Cauchy}, and we present here the main steps. First, for the existence, we define a scheme:
    \begin{itemize}
    \item $\zeta_{1, 0} (t) \coloneqq \nabla \psi_{\textnormal{in}, 1}$ and $v_{1,0} (t) \coloneqq \nabla \phi_{\textnormal{in}, 1}$ for all $t \geq 0$,
    \item For any $k \in \mathbb{N}$, $(\zeta_{1,k+1}, v_{1, k+1})$ is defined by
    \begin{System} \notag
        \partial_t v_{1, k+1} + \nabla ( v^0 \cdot v_{1, k+1} ) + \lambda \, \Re \zeta_{1, k} = 0, \\
        \partial_t \zeta_{1, k+1} + \nabla ( v^0 \cdot \zeta_{1,k+1} ) + \nabla ( v_{1,k+1} \cdot \zeta^0) + \nabla \divg v_{1, k+1} = \frac{i}{2} \Bigl( \nabla \divg \zeta^0 + 2 \, \nabla ( \zeta^0 \cdot \zeta^0 ) \Bigr),
    \end{System}
    and the initial data $\zeta_{1,k+1} (0) = \nabla \psi_{\textnormal{in}, 1}$ and $v_{1, k+1} (0) = \nabla \phi_{\textnormal{in}, 1}$.
    \end{itemize}
    The previous system and Lemma \ref{lem:toolbox} give the following estimates: for all $t \in [0, T]$ and $k \in \mathbb{N}$,
    \begin{equation*}
        \mathcal{E}_{2 M_2, m, \delta} (\zeta_{1,k+1}) (t) + \mathcal{E}_{2 M_2, m, \delta} (v_{1,k+1}) (t) \leq \abs{\lambda} \tnorm{\zeta_{1,k}}_{2, T, m+\frac{1}{2}, \delta} + \frac{1}{2} \sqrt{\omega_\textnormal{in}} + K^{\ell} \omega_\textnormal{in}.
    \end{equation*}
    From this estimate, one can prove by induction that, for all $k \in \mathbb{N}$ and $t \in [0, T]$,
    \begin{equation*}
        \mathcal{E}_{2 M_2, m, \delta} (\zeta_{1,k}) (t) + \mathcal{E}_{2 M_2, m, \delta} (v_{1,k}) (t) \leq \sqrt{\omega_\textnormal{in}} + 2 K^{\ell} \omega_\textnormal{in}.
    \end{equation*}
    Moreover, there also holds
    \begin{equation*}
        \mathcal{E}_{2 M_2, m, \delta} (\zeta_{1,k+1} - \zeta_{1, k}) (t) + \mathcal{E}_{2 M_2, m, \delta} (v_{1,k+1} - v_{1,k}) (t) \leq \abs{\lambda} \tnorm{\zeta_{1,k} - \zeta_{1, k-1}}_{2, T, m+\frac{1}{2}, \delta},
    \end{equation*}
    which proves the convergence of the scheme in $(\mathcal{C}_T \mathcal{H}^m_\delta \times \mathcal{C}_T \mathcal{H}^{m+1}_\delta) \cap (L^2_T \mathcal{H}^{m + \frac{1}{2}}_\delta \times L^2_T \mathcal{H}^{m+\frac{3}{2}}_\delta)$ like in Section \ref{subsec:conv_scheme}. The uniqueness can also be proved with a similar computation.
\end{proof}

\subsection{Proof of Theorem \ref{th:semicla_th}}

We recall that $Z^\varepsilon = \zeta^\varepsilon - z^0$ and $V^\varepsilon = v^\varepsilon - v^0$. Define $\mathcal{Z}^\varepsilon \coloneqq Z^\varepsilon - \varepsilon \zeta_1$ and $\mathcal{V}^\varepsilon \coloneqq  V^\varepsilon - \varepsilon v_1$. Then we have
\begin{gather*}
    \partial_t \mathcal{V}^\varepsilon + \nabla (v^0 \cdot \mathcal{V}^\varepsilon) + \lambda \, \Re \mathcal{Z}^\varepsilon = \frac{1}{2} \nabla (V^\varepsilon \cdot V^\varepsilon), \\
    \partial_t \mathcal{Z}^\varepsilon + \nabla \Bigl( \mathcal{V}^\varepsilon \cdot \zeta^0 \Bigr) + \nabla \Bigl( v^0 \cdot \mathcal{Z}^\varepsilon \Bigr) + \nabla \divg \mathcal{V}^\varepsilon = \nabla \Bigl( V^\varepsilon \cdot Z^\varepsilon \Bigr) + i \frac{\varepsilon}{2} \Bigl( \nabla \divg Z^\varepsilon + 2 \, \nabla ( Z^\varepsilon \cdot \zeta^0 ) + 2 \, \nabla ( Z^\varepsilon \cdot \zeta^\varepsilon ) \Bigr).
\end{gather*}
This system is very similar to \eqref{sys:semicla1} and the estimates are actually the same up to two differences. First, the source terms (at the right-hand side of each equation) are $O ( \varepsilon^2 )$ but only in $L^2_T \mathcal{H}^{\ell - \frac{3}{2}}_\delta$ for the first equation and in $L^2_T \mathcal{H}^{\ell - \frac{5}{2}}_\delta$ for the second one. Indeed, there holds
\begin{gather*}
    \tnorm{\nabla (V^\varepsilon \cdot V^\varepsilon)}_{2, T, \ell - \frac{3}{2}, \delta} \leq K^{\ell} \sqrt{T} \tnorm{V^\varepsilon}_{\infty, T, \ell, \delta}^2 \leq C \varepsilon^2, \\
    \tnorm{\nabla (V^\varepsilon \cdot Z^\varepsilon)}_{2, T, \ell - \frac{5}{2}, \delta} \leq K^{\ell - 1} \sqrt{T} \tnorm{V^\varepsilon}_{\infty, T, \ell - 1, \delta} \tnorm{Z^\varepsilon}_{\infty, T, \ell - 1, \delta} \leq C \varepsilon^2, \\
    \varepsilon \tnorm{\nabla \divg Z^\varepsilon}_{2, T, \ell - \frac{5}{2}, \delta} \leq \varepsilon \tnorm{Z^\varepsilon}_{2, T, \ell - \frac{1}{2}, \delta} \leq C \varepsilon^2, \\
    \varepsilon \tnorm{\nabla (Z^\varepsilon \cdot \zeta^0)}_{2, T, \ell - \frac{5}{2}, \delta} \leq \varepsilon K^{\ell - 1} \sqrt{T} \tnorm{Z^\varepsilon}_{\infty, T, \ell - 1, \delta} \tnorm{\zeta^0}_{\infty, T, \ell, \delta} \leq C \varepsilon^2, \\
    \varepsilon \tnorm{\nabla (Z^\varepsilon \cdot \zeta^\varepsilon)}_{2, T, \ell - \frac{5}{2}, \delta} \leq \varepsilon K^{\ell - 1} \sqrt{T} \tnorm{Z^\varepsilon}_{\infty, T, \ell - 1, \delta} \tnorm{\zeta^\varepsilon}_{\infty, T, \ell, \delta} \leq C \varepsilon^2.
\end{gather*}
Then, we apply Lemma \ref{lem:toolbox} with $m = \ell - 2$, and in particular \eqref{eq:tool5}. For this, we need $\ell - 2 > \frac{d-1}{2}$, which is assumed by Assumption \ref{ass:semicla1}. Therefore, we get for some $C > 0$ and for all $t \in [0, T]$
\begin{equation*}
    \mathcal{E}_{2 M_2, \ell - 2, \delta} (\mathcal{Z}^\varepsilon) + \mathcal{E}_{2 M_2, \ell - 1, \delta} (\mathcal{V}^\varepsilon) (t) \leq (r_{\ell - 2}^\varepsilon)^2 + C \, \varepsilon^4.
\end{equation*}
As for the case of $\psi^\varepsilon$ and $\phi^\varepsilon$, we also have $P^\varepsilon = \psi^\varepsilon - \psi^0$ and $Q^\varepsilon = \phi^\varepsilon - \phi^0$, so that by defining $\mathcal{P}^\varepsilon \coloneqq P^\varepsilon - \varepsilon \psi_1$ and $\mathcal{Q}^\varepsilon = Q^\varepsilon - \varepsilon \phi_1$, we obtain
\begin{gather*}
    \partial_t \mathcal{P}^\varepsilon + \mathcal{V}^\varepsilon \cdot \zeta^0 + v^0 \cdot \mathcal{Z}^\varepsilon + \divg \mathcal{V}^\varepsilon = V^\varepsilon \cdot Z^\varepsilon + i \frac{\varepsilon}{2} \Bigl( \divg Z^\varepsilon + 2 \, Z^\varepsilon \cdot \zeta^\varepsilon + 2 \, Z^\varepsilon \cdot \zeta^0 \Bigr), \\
    \partial_t \mathcal{Q}^\varepsilon + \mathcal{V}^\varepsilon \cdot v^0 + \lambda \Re P^\varepsilon = \frac{1}{2} V^\varepsilon \cdot V^\varepsilon.
\end{gather*}
Like in Section \ref{subsec:semicla2} but with $m = \ell - 2$, we get with the first equation
\begin{equation*}
    \mathcal{E}_{M, \ell - 2, \delta} (\mathcal{P}^\varepsilon) (t) \leq \norm{\mathcal{P}^\varepsilon (0)}_{\ell - 2, \delta}^2 + C ( \varepsilon^4 + (r_{\ell - 2}^\varepsilon)^2 )^\frac{1}{2} \norm{P^\varepsilon}_{2, \ell - \frac{3}{2}, \delta},
\end{equation*}
and with the second one
\begin{equation*}
    \mathcal{E}_{M, \ell - 1, \delta} (\mathcal{Q}^\varepsilon) (t) \leq \norm{\mathcal{Q}^\varepsilon (0)}_{\ell - 1, \delta}^2 + C ( \varepsilon^4 + (r_{\ell - 2}^\varepsilon)^2 )^\frac{1}{2} \, \norm{Q^\varepsilon}_{2, \ell - \frac{1}{2}, \delta},
\end{equation*}
and the conclusion easily follows, using the fact that
\begin{equation*}
    \norm{\mathcal{P}^\varepsilon (0)}_{\ell - 2, \delta}^2 + \norm{\mathcal{Q}^\varepsilon (0)}_{\ell - 1, \delta}^2 + (r_{\ell - 2}^\varepsilon)^2 \leq 2 (\tilde{r}_{\ell - 1}^\varepsilon)^2,
\end{equation*}
with \eqref{eq:anal_deriv}.

\subsection{Proof of Lemma \ref{lem:semicla_cond}}

We now address Lemma \ref{lem:semicla_cond}. First, assume $\phi_1 (t) \equiv 0$ on $[0, T]$, which yields not only $\phi_{\textnormal{in}, 1} \equiv 0$ but also $\nabla \phi_1 (t) \equiv 0$. Then, the first equation of \eqref{sys:wkb_riemann2} gives
\begin{equation*}
    \Re \psi_1 = 0.
\end{equation*}
This is in particular true for $t = 0$. Since $\psi_1 (0) = \psi_{\textnormal{in}, 1}$ is real-valued by assumption, we obtain $\psi_{\textnormal{in}, 1} \equiv 0$.

On the other hand, since $\phi_1$ is real-valued, taking the real part of the second equation of \eqref{sys:wkb_riemann2} leads to a system in $\Re \psi^1$ and $\phi^1$:
\begin{System} \notag
    \partial_t \phi_1 + v^0 \cdot \nabla \phi_1 + \lambda \Re \psi_1 = 0, \qquad \qquad &\phi_1 (0) = \phi_{\textnormal{in}, 1}, \\
    \partial_t \Re \psi_1 + v^0 \cdot \nabla \Re \psi_1 + \nabla \phi_1 \cdot \Re \zeta^0 + \Delta \phi_1 = 0, \qquad \qquad  &\Re \psi_1 (0) = \psi_{\textnormal{in}, 1}.
\end{System}
This system is linear in $(\Re \psi_1, \phi_1)$, without any source term.
Therefore, if $(\psi_{\textnormal{in}, 1}, \phi_{\textnormal{in}, 1}) \equiv (0, 0)$, we get $(\Re \psi_1, \phi_1) \equiv (0,0)$, which gives the conclusion.

\section{Assumptions on the initial data} \label{sec:ass_in_data}

In this section, we discuss about Assumption \ref{ass:bound} for the initial data. In particular, the analytic behavior is asked only for the gradient of the initial data $(\nabla \psi_\textnormal{in}^\varepsilon, \nabla \phi_\textnormal{in}^\varepsilon)$. One can show that this statement is different from asking the analyticity for the initial data $(\psi_\textnormal{in}^\varepsilon, \phi_\textnormal{in}^\varepsilon)$ or even $(\psi_\textnormal{in}^\varepsilon, \nabla \phi_\textnormal{in}^\varepsilon)$ directly. Indeed, when we consider the Fourier transform of these functions, we know that they are linked (for instance for $\psi_\textnormal{in}^\varepsilon$) through the relation
\begin{equation*}
    \mathcal{F} (\nabla \psi_\textnormal{in}^\varepsilon) = - i \xi \mathcal{F} (\psi_\textnormal{in}^\varepsilon).
\end{equation*}
In particular, when we consider analyticity, we multiply these Fourier transforms by some $e^{- \delta_\textnormal{in} \langle \xi \rangle}$ and ask them to be square integrable.
Thus, if $\mathcal{F} (\nabla \psi_\textnormal{in}^\varepsilon) e^{- \delta_\textnormal{in} \langle \xi \rangle}$ is $L^2$, then the previous relation gives that $\mathcal{F} (\psi_\textnormal{in}^\varepsilon) e^{- \delta_\textnormal{in} \langle \xi \rangle}$ is square integrable for $\abs{\xi} \rightarrow \infty$.
However, we could still have a problem at $\xi = 0$, for instance if $\abs{\mathcal{F} (\psi_\textnormal{in}^\varepsilon) (\xi)} \sim \abs{\xi}^{-\frac{d}{2}}$ which is not square integrable but which gives $\abs{\mathcal{F} (\nabla \psi_\textnormal{in}^\varepsilon) (\xi)}^2 \sim \abs{\xi}^{-d+1}$ which is integrable.

This problem at $\xi = 0$ is actually linked to the behavior of $\psi_\textnormal{in}^\varepsilon (x)$ for $\abs{x} \rightarrow \infty$ since we formally have
\begin{align*}
    \mathcal{F} (\nabla \psi_\textnormal{in}^\varepsilon) (0) &= \int \nabla \psi_\textnormal{in}^\varepsilon (x) \diff x \\
        &= \lim_{R \rightarrow \infty} \int_{\abs{x} \leq R} \nabla \psi_\textnormal{in}^\varepsilon (x) \diff x \\
        &= \lim_{R \rightarrow \infty} \int_{\abs{x} = R} \psi_\textnormal{in}^\varepsilon (y) \, \vec{n} \diff \sigma (y).
\end{align*}
In particular, in dimension $d = 1$, we show that we can have any possible limit at infinity, and even different limits at $\pm \infty$.

\begin{lem} \label{lem:ass_in_data}
    For any pair $(a_-, a_+) \in (\mathbb{R} \cup \{ \pm \infty \})^2$, there exists $f \in \mathcal{C}^1 (\mathbb{R})$ such that $f' \in \mathcal{H}_\delta^0$ for any $\delta > 0$ and $\lim_{\pm \infty} f = a_\pm$.
\end{lem}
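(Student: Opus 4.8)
The plan is to realize $f$ as the primitive of its derivative: writing $g\coloneqq f'$ and $f(x) = c + \int_0^x g(s)\,\diff s$ with $c\in\mathbb R$, one has $f\in\mathcal C^1$ as soon as $g$ is continuous, and the two limits are
\[
 \lim_{x\to+\infty} f(x) = c + \int_0^{+\infty} g, \qquad \lim_{x\to-\infty} f(x) = c - \int_{-\infty}^{0} g,
\]
each understood in $\mathbb R\cup\{\pm\infty\}$. Thus it suffices to produce a real-valued $g\in\bigcap_{\delta>0}\mathcal H^0_\delta$ whose two half-line integrals $\int_0^{+\infty} g$ and $\int_{-\infty}^0 g$ are finite or $\pm\infty$ according to the prescribed type of $(a_-,a_+)$, and then to adjust the constant $c$ (and one scaling parameter) to hit the exact finite values. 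In the purely finite case $a_\pm\in\mathbb R$ this is already immediate: take $g=\beta\,e^{-x^2}$, which lies in every $\mathcal H^0_\delta$ since its Fourier transform $\sqrt\pi\,e^{-\xi^2/4}$ beats $e^{\delta\langle\xi\rangle}$, and choose $\beta$ and $c$ to solve the resulting $2\times 2$ linear system.

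The core of the argument is a single building block producing a \emph{one-sided} divergence while staying analytic. I would set
\[
 g^+(x) \coloneqq \sum_{n\ge 1} \frac1n\, e^{-(x-n)^2}.
\]
This series converges locally uniformly together with all derivatives, so $g^+$ is real and smooth. Since every term is nonnegative, monotone convergence gives $\int_0^{+\infty} g^+ = \sum_{n\ge1}\frac1n\int_{-n}^{+\infty} e^{-u^2}\,\diff u = +\infty$, whereas the Gaussian tail bound $\int_n^{+\infty} e^{-u^2}\,\diff u\le \tfrac{1}{2n}e^{-n^2}$ yields $\int_{-\infty}^0 g^+ = \sum_{n\ge1}\frac1n\int_n^{+\infty} e^{-u^2}\,\diff u <\infty$. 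For the analytic membership I would pass to Fourier variables:
\[
 \widehat{g^+}(\xi) = \sqrt\pi\, e^{-\xi^2/4}\, S(\xi), \qquad S(\xi)\coloneqq \sum_{n\ge1}\frac1n\, e^{-in\xi},
\]
where $S$ is $2\pi$-periodic and belongs to $L^2$ of one period (its Fourier coefficients are $1/n$, and $\sum 1/n^2<\infty$). For fixed $\delta$ one has $e^{2\delta\langle\xi\rangle-\xi^2/2}\le C_\delta\, e^{-\xi^2/4}$, so
\[
 \norm{g^+}_{0,\delta}^2 = \pi\int_{\mathbb R} e^{2\delta\langle\xi\rangle-\xi^2/2}\,\abs{S(\xi)}^2\,\diff\xi \le \pi C_\delta \sum_{k\in\mathbb Z}\Bigl(\sup_{[2\pi k,2\pi(k+1)]} e^{-\xi^2/4}\Bigr)\norm{S}_{L^2(0,2\pi)}^2 <\infty,
\]
the sum over periods converging by the Gaussian factor. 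Hence $g^+\in\mathcal H^0_\delta$ for every $\delta>0$. Its reflection $g^-(x)\coloneqq g^+(-x)$ then has $\int_{-\infty}^0 g^- = +\infty$ and $\int_0^{+\infty} g^-<\infty$, and lies in the same spaces.

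It then remains to assemble $g = s_+\, g^+ + s_-\, g^- + \beta\, e^{-x^2}$ with signs $s_\pm\in\{-1,0,1\}$ dictated by the type of $a_\pm$. When $a_+=\pm\infty$ I would take $s_+=\pm1$ so that $\int_0^{+\infty} g$ is $\pm\infty$ (the infinite contribution of $g^+$ dominating the finite one of $g^-$ and of the Gaussian); symmetrically $a_-=\pm\infty$ is governed by $s_-$ through $-\int_{-\infty}^0 g$; and any side on which the target is finite is handled by setting the corresponding sign to $0$ and using $\beta,c$ to fit the value, exactly as in the finite--finite case. Checking the finitely many sign patterns (both finite, one side infinite, both sides infinite) shows every pair $(a_-,a_+)\in(\mathbb R\cup\{\pm\infty\})^2$ is attained, which is the claim.

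The delicate point --- and the only real obstacle --- is the apparent tension in the requirements on $g$: a divergent half-line integral forces the slow, non-integrable spatial decay $g(x)\sim c/x$, while membership in $\mathcal H^0_\delta$ for \emph{every} $\delta$ forces faster-than-exponential decay of $\widehat{g}$. These are reconciled because they concern disjoint frequency regimes: the Fourier constraint is a statement about $\xi\to\infty$, where the Gaussian envelope $e^{-\xi^2/4}$ wins against every $e^{\delta\langle\xi\rangle}$, whereas the slow spatial tail is encoded by the low-frequency singularity of the periodic symbol $S$ at $\xi\in 2\pi\mathbb Z$ (in particular at $\xi=0$, consistently with the heuristic $\widehat{\nabla\psi^\varepsilon_\textnormal{in}}(0)=a_+-a_-$ discussed above). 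The superposition-of-Gaussians device is precisely what lets one prescribe the $\xi=0$ behaviour freely without touching the high-frequency analyticity.
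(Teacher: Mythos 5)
Your proof is correct and takes essentially the same route as the paper's: the finite--finite case via a Gaussian and an affine adjustment, an analytic building block with a divergent integral on exactly one side whose Fourier transform carries a Gaussian envelope beating every $e^{\delta \langle \xi \rangle}$, and reflections plus sign combinations for the remaining cases. The only variation is in realizing the building block: you use the discrete superposition $\sum_{n \geq 1} n^{-1} e^{-(x-n)^2}$, estimated through the $2\pi$-periodic symbol $S(\xi) = \sum_{n \geq 1} n^{-1} e^{-in\xi}$, whereas the paper convolves the Gaussian with the one-sided kernel $h_2(x) = (1+x)^{-1}$ for $x > 0$ and exploits $\hat{h}_2 \in L^2$ together with the boundedness of $e^{2\delta\langle\xi\rangle - \xi^2}$ --- the same mechanism in continuous rather than discrete form.
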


\begin{proof}
    We will prove this result in three steps. First, we will prove it for $(a_-, a_+) \in \mathbb{R}^2$ by constructing a first function whose limit at $- \infty$ (resp. $+ \infty$) is $0$ (resp. $1$) and satisfying the previous regularity conditions. Then, we will prove it when exactly one of them is finite and the other infinity. Finally, we will prove it for two infinity limits.
    
    \textit{First step.} We construct here a first function which will be used to prove the case $(a_-, a_+) \in \mathbb{R}^2$. Define
    \begin{equation*}
        h_1 (x) \coloneqq \frac{1}{\sqrt{2 \pi}} e^{- \frac{x^2}{2}}.
    \end{equation*}
    It is known that $h_1 \in L^1 \cap L^2$ and
    \begin{equation*}
        \hat{h}_1 (\xi) = e^{- \frac{\xi^2}{2}}.
    \end{equation*}
    Therefore, $h_1 \in \mathcal{H}_\delta^0$ for any $\delta > 0$. Then, we define:
    \begin{equation*}
        g_1 (x) \coloneqq \int_{- \infty}^{x} h_1(y) \diff y.
    \end{equation*}
    It is well defined since $h_1 \in L^1$. Moreover, it is obviously $\mathcal{C}^1$ with $g_1' = h_1 \in \mathcal{H}_\delta^0$ for any $\delta > 0$, and $\lim_{- \infty} g_1 = 0$. Furthermore, it is also known that
    \begin{equation*}
        \lim_{+ \infty} g_1 = \int_{- \infty}^{+ \infty} h_1(y) \diff y = 1.
    \end{equation*}
    Then, for $(a_-, a_+) \in \mathbb{R}^2$, $f_1 \coloneqq a_- + (a_+ - a_-) \, g_1$ satisfies the needed assumptions.
    
    \textit{Second step.} We assume here $a_+ = + \infty$ and $a_- \in \mathbb{R}$. Define
    \begin{equation*}
        h_2 (x) \coloneqq 
        \begin{cases}
            (1+x)^{-1} \qquad &\textnormal{if } x > 0, \\
            0 \qquad &\textnormal{otherwise}.
        \end{cases}
    \end{equation*}
    We know that $h_2 \in L^2$, therefore $\hat{h}_2 \in L^2$ too. However, $h_2 \notin L^1$. More precisely, $h_2$ is not integrable at $+ \infty$. Then, we define $g_2$ by convolution:
    \begin{equation*}
        g_2 \coloneqq h_1 * h_2.
    \end{equation*}
    This is well defined pointwise since both $h_1$ and $h_2$ are in $L^2$, and it is also in $L^2$ since $h_1 \in L^1$.
    In particular, we have
    \begin{equation*}
        \hat{g}_2 (\xi) = \hat{h}_2 (\xi) \, e^{- \frac{\xi^2}{2}} \in L^2 (e^{-2 \delta \langle \xi \rangle} \diff \xi),
    \end{equation*}
    for every $\delta > 0$, therefore $g_2 \in \mathcal{H}_\delta^0$. In particular, $g_2 \in \mathcal{C}^\infty$. Moreover, $g_2 \geq 0$ from the fact that both $h_1$ and $h_2$ are non negative. Furthermore, $g_2$ has the same integrability property as $h_2$: it is integrable at $- \infty$ and is not at $+ \infty$. Indeed, there holds
    \begin{align*}
        \int_{- \infty}^{0} g_2 (x) \diff x &= \int_{-\infty}^{0} \int_{- \infty}^{+ \infty} h_2 (x-y) \, h_1 (y) \diff y \diff x \\
            &= \int_{- \infty}^0 \int_{- \infty}^{x} h_2 (x-y) \, h_1 (y) \diff y \diff x \\
            &= \int_{- \infty}^0 \int_{y}^0 h_2 (x-y) \, h_1 (y) \diff x \diff y \\
            &= \int_{- \infty}^0 h_1 (y) \int_{y}^0 h_2 (x-y) \diff x \diff y \\
            &= \int_{- \infty}^0 h_1 (y) \int_{0}^{- y} h_2 (x) \diff x \diff y \\
            &= \int_{- \infty}^0 h_1 (y) \ln{(1-y)} \diff x \diff y < \infty.
    \end{align*}
    However, it is still not integrable:
    \begin{equation*}
        \int_{-\infty}^{+\infty} g_2 (x) \diff x = \int_{-\infty}^{+\infty} h_1 (x) \diff x \, \int_{-\infty}^{+\infty} h_2 (y) \diff y = + \infty.
    \end{equation*}
    Hence, we can define the following function:
    \begin{equation*}
        f_2 (x) \coloneqq \int_{-\infty}^x g_2 (y) \diff y.
    \end{equation*}
    By definition, $f_2' = g_2 \in \mathcal{H}_\delta^0$ for all $\delta > 0$, $\lim_{- \infty} f = 0$ and $\lim_{+ \infty} f = + \infty$. Hence, $a_- + f_2$ satisfies the needed properties.
    We can recover the other cases by adding a $-$ and/or considering $f_2 (-x)$.
    
    \textit{Third step.} If both $a_+$ and $a_-$ are $\pm \infty$, one can consider $f_3 (x) = \pm f_2 (x) \pm f_2 (-x)$.
\end{proof}

\begin{rem}
    In particular, there also holds $f' \in \mathcal{H}^\ell_\delta$ for any $\delta, \ell > 0$.
\end{rem}

\begin{rem}
    Even though this result is in dimension $1$ for simplicity, the previous constructions can be extended to higher dimensions.
\end{rem}

\bibliographystyle{abbrv}
\bibliography{sample}

\end{document}